\theoremstyle{plain}
\newtheorem{thm}{Theorem}[section]
\newtheorem{lem}[thm]{Lemma}
\newtheorem{dfn}[thm]{Definition}
\newtheorem{prop}[thm]{Proposition}
\newtheorem{rmk}[thm]{Remark}
\def\D{\mathrm{D}}
\def\M{\mathscr{M}}
\def\N{\mathscr{N}}
\def\h{\mathrm{h}}
\def\p{\mathrm{p}}
\def\s{\mathrm{s}}
\def\u{\mathrm{u}}
\def\Nset{\mathbb{N}}
\def\Rset{\mathbb{R}}
\def\Sset{\mathbb{S}}
\def\Tset{\mathbb{T}}
\def\Zset{\mathbb{Z}}
\DeclareMathOperator{\sech}{sech}
\DeclareMathOperator{\csch}{csch}
\DeclareMathOperator{\sn}{sn}
\DeclareMathOperator{\cn}{cn}
\DeclareMathOperator{\dn}{dn}
\def\id{\mathrm{id}}
\def\epsilon{\varepsilon}
\def\theequation{\arabic{section}.\arabic{equation}}
\begin{document}

% **********************************************************
% Title of This Paper
% **********************************************************

\title[First integrals and commutative vector fields]%
{Persistence of periodic and homoclinic orbits, first integrals and commutative vector fields
 in dynamical systems}
%\thanks{}

\author[S. Motonaga and K. Yagasaki]{Shoya Motonaga and Kazuyuki Yagasaki}

\address{Department of Applied Mathematics and Physics, Graduate School of Informatics,
Kyoto University, Yoshida-Honmachi, Sakyo-ku, Kyoto 606-8501, JAPAN}

\email{mnaga@amp.i.kyoto-u.ac.jp}
\email{yagasaki@amp.i.kyoto-u.ac.jp}

\date{\today}
%\subjclass[2010]{34E10; 37C27; 37C29; 34A05; 37J40; 37J45}
\keywords{%
Periodic orbit; homoclinic orbit; first integral; commutative vector field; perturbation; 
 Melnikov's method}

\begin{abstract}
We study persistence of periodic and homoclinic orbits,
 first integrals and commutative vector fields in dynamical systems
 depending on a small parameter $\epsilon>0$
 and give several necessary conditions for their persistence.
Here we treat homoclinic orbits not only to equilibria but also to periodic orbits.
We also discuss some relationships {of these results
 with the standard subharmonic and homoclinic Melnikov methods
 for time-periodic perturbations of single-degree-of-freedom Hamiltonian systems,
 and with another version of the homoclinic Melnikov method
 for autonomous perturbations of multi-degree-of-freedom Hamiltonian systems.}
In particular, we show that
 a first integral which converges to the Hamiltonian {or another first integral}
 as the perturbation tends to zero
 does not exist near the unperturbed periodic or homoclinic orbits
 {in the perturbed systems}
 if the subharmonic or homoclinic Melnikov functions are not identically zero
 on connected open sets.
We illustrate our theory for {four} examples:
The periodically forced Duffing oscillator,
 {two identical pendula coupled with a harmonic oscillator,}
 a periodically forced rigid body and a three-mode truncation of a buckled beam.
\end{abstract}
\maketitle

% **********************************************************
% Section 1
% **********************************************************

\section{Introduction}
Let $\M$ be an $n$-dimensional paracompact oriented $C^4$ real manifold for $n\ge 2$.
{Here we require its paracompactness and orientedness for defining integrals on $\M$.}
Consider dynamical systems of the form
\begin{align}\label{sys}
 \dot{x}=X_\varepsilon(x), \quad x\in \M,
\end{align}
where $\epsilon$ is a small parameter such that $0<\varepsilon\ll1$,
 and the vector field $X_\varepsilon$ is $C^3$ with respect to $x$ and $\epsilon$.
Let $X_\varepsilon(x)=X^0(x)+\varepsilon X^1(x)+O(\varepsilon^2)$
 for $\epsilon>0$ sufficiently small.
The system \eqref{sys} becomes
\begin{align}\label{sys0}
\dot{x}=X^0(x)
\end{align}
when $\varepsilon=0$, and it is regarded as a perturbation of \eqref{sys0}.
Assume that the unperturbed system \eqref{sys0} has a periodic or homoclinic orbit
 and a first integral or commutative vector field.
Here we are mainly interested in their persistence %and its dynamical aspects
 in \eqref{sys} for $\epsilon>0$ sufficiently small.
%{Note that we need the conditions for paracompactness and orientation of the manifold to define some integrals which are used in our main results.}
%See Section~\ref{main} for our precise assumptions.

Bogoyavlenskij \cite{B98} extended a concept of Liouville integraility{\cite{A89,M99}},
which is defined for Hamiltonian systems,
and proposed a definition of integrability for general systems.
For \eqref{sys}, its integrability means that there exist $k\,(\ge 1)$ %{pairwise}
 commutative vector fields containing $X_\epsilon$ and $n-k\,(\ge 0)$ %common
 first integrals
 for them such that the vector fields and first integrals are, respectively,
 linearly and functionally independent over a dense open set in $\M$.
For integrable systems in this meaning,
 we have a statement similar to the Liouville-Arnold theorem for Hamiltonian systems
 (e.g., Section~49 in Chapter 10 of \cite{A89}):
The flow on a level set of the first integrals is diffeomorphically conjugate to a linear flow
 on the $k$-dimensional torus $\mathbb{T}^k$
 if the level set is a $k$-dimensional compact manifold (see Proposition~2 of \cite{B98}).
Thus, the existence of first integrals and commutative vector fields
is closely related to integrability of \eqref{sys}. 

Even if the unperturbed system \eqref{sys0} is integrable,
 the perturbed system \eqref{sys} is generally believed to be nonintegrable
 for $\epsilon>0$ small.
For example, when the system \eqref{sys} is analytic and Hamiltonian for $\epsilon\ge 0$,
 a famous result of Poincar\'e \cite{P92} says that
 its analytic Liouville integrability does not persist for $\epsilon>0$
 under some generic assumptions.
This means that not only first integrals independent of the Hamiltonian
 but also (Hamiltonian) vector fields commutative with {$X_0$} do not persist in general.
See also \cite{K83} {for a more general result on nonexistence of first integrals,
 which was extended to non-Hamiltonian systems in \cite{K96}.}
Moreover, Morales-Ruiz \cite{M02} studied time-periodic Hamiltonian perturbations
 of single-degree-of-freedom Hamiltonian systems with homoclinic orbits,
 and showed a relationship between their nonintegrability
 and %the homoclinic Melnikov method \cite{GH83,M63,W90}
% when they are regarded as {four}-dimensional systems
 {a version due to Ziglin \cite{Z81} of the Melnikov method \cite{M63}}
 by taking the {time $t$ and small parameter $\epsilon$ as state variables}.
{Here the Melnikov method enables us
 to detect transversal self-intersection of complex separatrices of periodic orbits
 unlike the standard version \cite{GH83,M63,W90}.}
More concretely, under some restrictive conditions,
 he essentially proved that they are {meromorphically} nonintegrable in the Bogoyavlenskij sense
 if the {Melnikov functions} are not identically zero,
 when a generalization due to Ayoul and Zung \cite{AZ10}
 of the Morales-Ramis theory \cite{M99,MR01},
 which provides a sufficient condition for nonintegrability of dynamical systems, is applied.
%Here the homoclinic Melnikov method provides a sufficient condition
% for persistence of homoclinic orbits in the perturbed systems:
%If the homoclinic Melnikov functions have a simple zero, then such orbits persist
% and by the Smale-Birkhoff homoclinic theorem \cite{GH83,W90} chaos occurs
% since they are transverse homoclinic orbits.
See Section~4.1 below for more details.
%and extended to general systems containing non-Hamiltonian systems
%later by Ayoul and Zung \cite{AZ10}.
%See also  Section~3.1.
On the other hand, to the authors' knowledge,
 the persistence of first integrals and commutative vector fields,
 {especially when the unperturbed system \eqref{sys0} is nonintegrable},
 in non-Hamiltonian systems has attracted little attention.

In this paper, we give several necessary conditions
 for persistence of periodic or homoclinic orbits, first integrals or commutative vector fields in \eqref{sys}.
{In particular}, we treat homoclinic orbits not only to equilibria but also to periodic orbits.
%The results of homoclinic orbits to equilibria immediately follow from those of periodic orbits.
{%We stress that 
Moreover, we see that
 persistence of periodic or homoclinic orbits and first integrals or commutative vector fields near them
 have the same necessary conditions
 (cf. Theorems~\ref{thm:main2}-\ref{thm:main4}, \ref{thm:main3}, \ref{thm:main3o}, \ref{thm:main6}
 and \ref{thm:main6o}).
This indicates close relationships between the dynamics
 and geometry of the perturbed systems.}
We also discuss some relationships {of these results
 with the standard subharmonic and homoclinic Melnikov methods \cite{GH83,M63,W90,Y96}
 %when the unperturbed system \eqref{sys0} is a 
 for time-periodic perturbations of single-degree-of-freedom Hamiltonian systems as in \cite{M02},
 and with another version of the homoclinic Melnikov method due to Wiggins \cite{W88}
 for autonomous {Hamiltonian} perturbations of multi-degree-of-freedom {integrable} Hamiltonian systems.
The subharmonic Melnikov method provides a sufficient condition
 for persistence of periodic orbits in the perturbed system:
If the subharmonic Melnikov functions have a simple zero, then such orbits persist.
{For the latter homoclinic Melnikov method, we restrict ourselves to the case
 in which the unperturbed systems have invariant manifolds consisting of periodic orbits
 to which there exist homoclinic orbits since only such a situation can be treated in our result,
 although the technique was developed for more general systems.}
These versions of the Melnikov methods are described shortly in Section~4 below.}
In particular, we show that
 a first integral which converges to the Hamiltonian {or another first integral} as $\epsilon\to 0$
 does not exist near the unperturbed periodic or homoclinic orbits
 in the perturbed system for $\epsilon>0$ sufficiently small
 if the subharmonic or homoclinic Melnikov functions are not identically zero
 on connected open sets.
We illustrate our theory for {four} examples:
 The periodically forced Duffing oscillator \cite{GH83,W90},
 {two identical pendula coupled with a harmonic oscillator},
 a periodically forced rigid body \cite{Y18}
 and a three-mode truncation of a buckled beam \cite{Y01}.
The persistence of first integrals is discussed in the first {and second examples},
 the persistence of a first integral and periodic orbits in the {third} one
and the persistence of commutaive vector fields in the {fourth} one.

The outline of this paper is as follows:
In Sections~\ref{fi} and \ref{cvf},
we present our main results for first integrals and commutative vector fields, respectively,
as well as for both of periodic and homoclinic orbits.
 For the reader's convenience, in Appendix~\ref{AppendixA}, 
we collect basic notions and facts on connections of vector bundles
and linear differential equations as auxiliary materials for Section~3.
In Section~\ref{Melnikov}, we describe some relationships of the main results
with the subharmonic and homoclinic Melnikov methods
when the unperturbed system \eqref{sys0} is a single-degree-of-freedom Hamiltonian system.
Finally, we give the {four} examples to illustrate our theory in Section~\ref{Application}.

%\newpage

\section{First Integrals}\label{fi}
In this section,
 we discuss persistence of periodic and homoclinic orbits and first integrals for \eqref{sys}.
In the discussion here, less smoothness of $\M$ and $X_\epsilon$ is needed:
 $\M$ and $X_\epsilon$ are $C^3$ and $C^2$, respectively.

\subsection{Periodic orbits}\label{Periodic orbits}
We begin with a case in which
the unperturbed system \eqref{sys0} has a periodic orbit in \eqref{sys0}.
We make the following assumptions on \eqref{sys0}:
\begin{itemize}
\setlength{\leftskip}{-1.2em}
\item[(A1)]
There exists a $T$-periodic orbit $\gamma(t)$ for some constant $T>0$ in \eqref{sys0};
\item[(A2)]
There exists a non-constant $C^3$ first integral $F(x)$ of \eqref{sys0}, i.e.,
\[
dF(X^0)=0,
\]
near $\Gamma=\{\gamma(t)\mid t\in[0,T)\}$.
\end{itemize}
Define
\begin{align}\label{integral1}
\mathscr{I}_{F,\gamma}:=\int_{0}^{T} dF(X^1)({\gamma(t)})dt.
\end{align}
We state our main results for persistence of periodic orbits and first integrals.

%%%begin:thm:main2%%%%%%%%%%%%%%%%%%%%%%%%%%%%%
\begin{thm}
\label{thm:main2}
Assume that (A1) and (A2) hold.
If the perturbed system \eqref{sys} has a $T_\varepsilon$-periodic orbit
$\gamma_\varepsilon$ depending $C^2$-smoothly on $\varepsilon$
such that $T_0=T$ and $\gamma_0=\gamma$, 
then the integral $\mathscr{I}_{F,\gamma}$ must be zero.
\end{thm}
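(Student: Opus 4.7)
The plan is to exploit the fact that $F$, evaluated along any closed orbit of the perturbed system, returns to its initial value; differentiating this identity with respect to $\epsilon$ at $\epsilon=0$ should reproduce $\mathscr{I}_{F,\gamma}$ up to a sign.

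First I would fix $\epsilon>0$ small and simply integrate $dF/dt$ along $\gamma_\epsilon$. Since $\dot\gamma_\epsilon(t)=X_\epsilon(\gamma_\epsilon(t))$ and $\gamma_\epsilon$ is $T_\epsilon$-periodic, we have
\begin{equation*}
0 \;=\; F(\gamma_\epsilon(T_\epsilon))-F(\gamma_\epsilon(0))
 \;=\; \int_0^{T_\epsilon} dF(X_\epsilon)(\gamma_\epsilon(t))\,dt.
\end{equation*}
Now I would substitute the expansion $X_\epsilon = X^0+\epsilon X^1+O(\epsilon^2)$ into the integrand. Assumption (A2) gives $dF(X^0)\equiv 0$ on a neighborhood of $\Gamma$, so for $\epsilon$ sufficiently small the curve $\gamma_\epsilon$ lies in that neighborhood (by $C^2$-continuity in $\epsilon$) and the leading-order term vanishes pointwise. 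Thus
\begin{equation*}
0 \;=\; \epsilon\int_0^{T_\epsilon} dF(X^1)(\gamma_\epsilon(t))\,dt \;+\; O(\epsilon^2).
\end{equation*}

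Dividing by $\epsilon$ and letting $\epsilon\to 0^+$, the continuity of $dF$, $X^1$ and the map $(\epsilon,t)\mapsto\gamma_\epsilon(t)$, together with $T_\epsilon\to T$ and $\gamma_\epsilon\to\gamma$ uniformly on $[0,T]$ (consequences of the $C^2$ dependence on $\epsilon$), allow one to pass to the limit under the integral, yielding
\begin{equation*}
\int_0^T dF(X^1)(\gamma(t))\,dt \;=\; 0,
\end{equation*}
which is precisely $\mathscr{I}_{F,\gamma}=0$.

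There is essentially no obstacle here: the argument is a first-variation computation, and the only care needed is in justifying that $\gamma_\epsilon$ stays inside the neighborhood where $F$ is a first integral and in the standard dominated-convergence step for passing the limit $\epsilon\to 0$ inside the integral with variable endpoint $T_\epsilon$. Both are immediate from the $C^2$-smoothness hypothesis on $\gamma_\epsilon$ and $X_\epsilon$. The argument does not require $\gamma$ to be isolated or hyperbolic, which is why the conclusion is a necessary rather than sufficient condition.
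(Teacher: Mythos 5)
Your proposal is correct and follows essentially the same route as the paper's own proof: integrate $dF(X_\epsilon)$ over one period of $\gamma_\epsilon$ to get zero by periodicity, use $dF(X^0)=0$ from (A2) to isolate the $O(\epsilon)$ term, and conclude by dividing by $\epsilon$ and letting $\epsilon\to 0$. The only cosmetic difference is that the paper absorbs the limiting step into $O(\epsilon^2)$ bookkeeping via $T_\epsilon=T+O(\epsilon)$ and $\gamma_\epsilon=\gamma+O(\epsilon)$, whereas you phrase it as a dominated-convergence passage to the limit; the substance is identical.
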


\begin{proof}
Assume that (A1) and (A2) hold and the system \eqref{sys}
 has a periodic orbit $\gamma_\varepsilon=\gamma+O(\epsilon)$ for $\epsilon>0$.
Since $\gamma_\varepsilon$ is a $T_\varepsilon$-periodic orbit in \eqref{sys}, we compute
\begin{align*}
\int_{0}^{T_\varepsilon}  dF(X_\varepsilon)({\gamma}_\varepsilon(t))dt
%=\int_{\gamma_\varepsilon} dF
=F(\gamma_\varepsilon(T_\varepsilon))-F(\gamma_\varepsilon(0))=0.
\end{align*}
On the other hand, since $F$ is a first integral of $X^0$,
we have $dF(X^0)=0$, so that
\begin{equation*}
dF(X_\epsilon)(\gamma_\epsilon(t))=\epsilon dF(X^1)(\gamma(t))+O(\epsilon^2).
%\label{eqn:main2}
\end{equation*}
Since $T_\epsilon=T+O(\epsilon)$, we see by the above two equations that
\begin{align*}
%\int_{\gamma_\varepsilon} dF=
\int_{0}^{T_\varepsilon}  dF(X_\varepsilon)({\gamma}_\varepsilon(t))dt
%=\int_{0}^{T_\varepsilon}dF(X^0+\varepsilon X^1+O(\varepsilon^2))
%({\gamma}_\varepsilon(t))dt\\
%=&\int_{0}^{T_\varepsilon}dF(X^0)({\gamma}_\varepsilon(t))dt+
%\varepsilon\int_{0}^{T_\varepsilon}  dF(X^1)({\gamma}_\varepsilon(t))dt+O(\varepsilon^2)\\
=\varepsilon\int_{0}^{T} dF(X^1)(\gamma(t))dt+O(\varepsilon^2)
%\end{align*}
%Let
%\[
%I_\varepsilon (s):=\int_0^s dF(X^1)({\gamma_\varepsilon}(t))dt.
%\]
%Noting that
%\[
%I_0(s)=\int_0^s dF(X^1)({\gamma}(t))dt,
%\]
%we obtain
%\begin{align*}
%\int_{0}^{T_\varepsilon}  dF(X^1)({\gamma_\varepsilon}(t))dt
%=I_\varepsilon(T_\varepsilon)
%=I_0(T)+O(\varepsilon)
%=\mathscr{I}_{F,\gamma}+O(\varepsilon),
%\end{align*}
%\begin{align*}
%\int_{\gamma_\varepsilon}dF
=\varepsilon\mathscr{I}_{F,\gamma}+O(\varepsilon^2)=0.
\end{align*}
%Hence, $\mathscr{I}_{F,\gamma}=0$.
Thus, we obtain $\mathscr{I}_{F,\gamma}=0$.
\end{proof}
%%%end:thm:main2%%%%%%%%%%%%%%%%%%%%%%%%%%%%%

%%%begin:Theorem1%%%%%%%%%%%%%%%%%%%%%%%%%%%
\begin{thm}
\label{thm:main1}
Assume that (A1) and (A2) hold.
If the perturbed system \eqref{sys} has a $C^3$ first integral $F_\varepsilon(x)$ 
depending $C^2$-smoothly on $\varepsilon$ near $\Gamma$ such that $F_0(x)=F(x)$,
then the integral $\mathscr{I}_{F,\gamma}$ must be zero.
\end{thm}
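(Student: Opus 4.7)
The plan is to unpack the first-integral condition $dF_\varepsilon(X_\varepsilon)\equiv 0$ near $\Gamma$ to first order in $\varepsilon$, and then integrate along the unperturbed periodic orbit, exploiting periodicity to kill a boundary term. Crucially, this argument will not require the periodic orbit $\gamma$ to persist (so we cannot just invoke Theorem~\ref{thm:main2}); the first integral $F_\varepsilon$ itself provides enough structure.

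Concretely, I would first use the $C^2$-smoothness in $\varepsilon$ to write
\[
F_\varepsilon(x)=F(x)+\varepsilon F^1(x)+O(\varepsilon^2)
\]
on a neighborhood of $\Gamma$, with $F^1$ a $C^3$ function there. Combining with $X_\varepsilon = X^0+\varepsilon X^1+O(\varepsilon^2)$, the identity $dF_\varepsilon(X_\varepsilon)=0$ expands as
\[
dF(X^0)+\varepsilon\bigl(dF(X^1)+dF^1(X^0)\bigr)+O(\varepsilon^2)=0.
\]
The $\varepsilon^0$ term vanishes by (A2), and matching coefficients at order $\varepsilon$ yields the key pointwise identity $dF(X^1)=-dF^1(X^0)$ in a neighborhood of $\Gamma$.

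Next I would evaluate along $\gamma$ and integrate. Since $\gamma$ is an integral curve of $X^0$, we have $dF^1(X^0)(\gamma(t))=\tfrac{d}{dt}F^1(\gamma(t))$, so
\[
\mathscr{I}_{F,\gamma}=\int_0^T dF(X^1)(\gamma(t))\,dt = -\int_0^T \frac{d}{dt}F^1(\gamma(t))\,dt = -\bigl[F^1(\gamma(T))-F^1(\gamma(0))\bigr]=0
\]
by $T$-periodicity of $\gamma$. This gives the desired conclusion.

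The only subtle point is the justification of the first-order expansion: one needs that $F^1(x)=\partial_\varepsilon F_\varepsilon(x)|_{\varepsilon=0}$ is defined and $C^3$ (at least $C^1$) on an open neighborhood of the compact set $\Gamma$, uniformly so that differentiation in $x$ and expansion in $\varepsilon$ commute. This follows from the hypothesis that $F_\varepsilon$ is $C^3$ in $x$ and $C^2$ in $\varepsilon$ on such a neighborhood, but it is worth stating explicitly. Apart from that, the argument is essentially a one-line computation, and no assumption on persistence of $\gamma$ in the perturbed system is needed.
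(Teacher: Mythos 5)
Your proof is correct and follows essentially the same route as the paper: both expand the identity $dF_\varepsilon(X_\varepsilon)=0$ to first order in $\varepsilon$, integrate along the unperturbed periodic orbit $\gamma$, and kill the resulting boundary term by $T$-periodicity. The only difference is cosmetic --- you match coefficients of $\varepsilon$ explicitly to get $dF(X^1)=-dF^1(X^0)$ before integrating, whereas the paper integrates $dF_\varepsilon(X^0)$ directly and divides out $\varepsilon$ at the end.
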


\begin{proof}
Assume that (A1) and (A2) hold and the system
 has a first integral $F_\epsilon=F+\varepsilon F^1+O(\varepsilon^2)$ near $\Gamma$.
Since $\gamma$ is a $T$-periodic orbit in \eqref{sys0}, we have
\begin{align}
\int_{0}^T dF_\epsilon(X_0)(\gamma(t))dt
%=\int_\gamma dF_\varepsilon
=F_\varepsilon(\gamma(T))-F_\varepsilon(\gamma(0))=0.
\label{eqn:main2a}
\end{align}
On the other hand, since $dF_\varepsilon(X_\varepsilon)=0$ and 
\[
dF_\varepsilon(X_\varepsilon)
=dF_\varepsilon (X^0)+\varepsilon dF_\varepsilon (X^1)+O(\varepsilon^2),
\]
we have 
\begin{align}
dF_\varepsilon (X^0)=-\varepsilon dF_\varepsilon (X^1)+O(\varepsilon^2)
\label{eqn:main2b}
\end{align}
near $\Gamma$.
From \eqref{eqn:main2a} and \eqref{eqn:main2b} we obtain
\begin{align*}
%\int_\gamma dF_\varepsilon
%=&
\int_{0}^T  dF_\varepsilon(X^0)({\gamma}(t))dt
=&-\varepsilon\int_{0}^T  dF_\varepsilon (X^1)({\gamma}(t))dt+O(\varepsilon^2)\\
=&-\varepsilon\mathscr{I}_{F,\gamma}+O(\varepsilon^2)=0,
\end{align*}
which yields the desired result.
\end{proof}
%%%end:Theorem1%%%%%%%%%%%%%%%%%%%%%%%%%%%%%%
Theorems~\ref{thm:main2} and \ref{thm:main1} mean that
if $\mathscr{I}_{F,\gamma}\neq0$, 
then neither the periodic orbit  $\gamma$ nor first integral $F$ persists
in \eqref{sys}  for $\epsilon>0$.

\subsection{Homoclinic orbits}\label{persistence-hom}
We next consider a case in which
the unperturbed system \eqref{sys0} has a homoclinic orbit to an equilibrium 
or to a periodic orbit in \eqref{sys0}.
Instead of (A1) and (A2), we assume the following on \eqref{sys0}:
\begin{itemize}
\setlength{\leftskip}{-1em}
\item[(A1')]
There exists a homoclinic orbit $\gamma^{\h}(t)$ to a $T$-periodic orbit $\gamma^{\p}(t)$
in \eqref{sys0};
\item[(A2')]
There exists a non-constant $C^3$ first integral $F(x)$ of \eqref{sys0}
 near $\Gamma^\h=\{\gamma^\h(t)\mid t\in\Rset\}\cup\Gamma^\p$,
 where $\Gamma^\p=\{\gamma^\p(t)\mid t\in[0,T)\}$.
\end{itemize}
In assumption~(A1') $\gamma^{\p}$ may be an equilibrium.
As seen below
we have statements similar to Theorems~\ref{thm:main2} and \ref{thm:main1} in this case
but another idea is needed for their proofs
since the situation is not simple when $\gamma^{\h}(t)$ is a homoclinic orbit
to a periodic orbit.

\begin{figure}
\includegraphics[scale=0.5,bb=0 0 437 340]{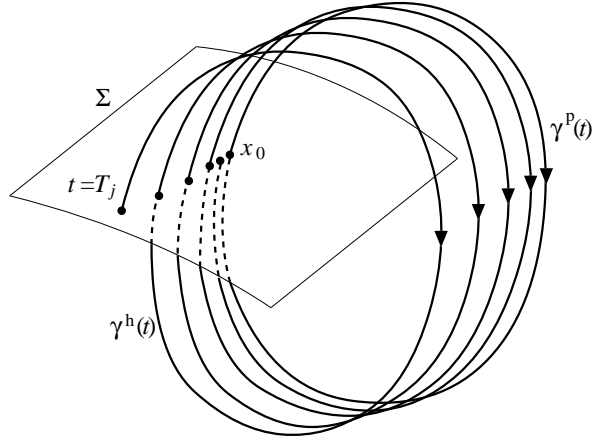}
\caption{Poincar\'e section $\Sigma$.
\label{fig:2a}}
\end{figure}

We first define an integral which plays a similar role
as $\mathscr{I}_{F,\gamma}$ in Section~\ref{Periodic orbits} (see Eq.~\eqref{integral1}).
%When $\gamma^\p(t)$ is an equilibrium, define the integral formally as
%\begin{align}\label{Integ-hom-eqb}
%{\tilde{\mathscr{I}}}_{F,\gamma^\h}
%:=\int_{-\infty}^{\infty} dF(X^1)({\gamma^{\h}(t)})dt.
%\end{align}
%When $\gamma^\p(t)$ is not an equilibrium, 
%define ${\tilde{\mathscr{I}}}_{F,\gamma^\h}$ as follows.
%
%Let $\Gamma^\p=\{\gamma^{\p}(t)\mid t\in \Rset\}$
%and let $\Gamma^\h=\{\gamma^{\h}(t)\mid t\in \Rset\}$.
Let $\gamma^\p$ be not an equilibrium. 
Choose a point $x_0=\gamma^{\p}(0)$
and take an $(n-1)$-dimensional hypersurface $\Sigma$ as the Poincar\'e section
such that $\gamma^\p$ intersects $\Sigma$ transversely at $x_0$.
Restricting $\Sigma$ to a sufficiently small neighborhood of $x_0$ if necessary, 
we can assume that $\gamma^{\h}(t)$ intersects $\Sigma$ transversely
infinitely many times, say at $T_j\in\Rset$ with $T_{j-1}<T_j$, $j\in\Zset$,
such that $\lim_{j\to-\infty}T_j=-\infty$ and $\lim_{j\to+\infty}T_j=+\infty$,
since it converges to $\gamma^{\p}(t)$.
{In particular},
\begin{align*}
\lim_{j\to \pm \infty} \gamma^\h(T_j)=x_0.
\end{align*}
See Fig.~\ref{fig:2a}.
So we formally define
 \begin{align}\label{Integ-hom-per}
\tilde{\mathscr{I}}_{F,\gamma^\h}
:=\lim_{k\to +\infty}\int_{T_{-k}}^{T_k} dF(X^1)({\gamma^{\h}(t)})dt.
\end{align}
If $\gamma_\epsilon^\p$ is an equilibrium,
then Eq.~\eqref{Integ-hom-per} is reduced to
\begin{equation}
\tilde{\mathscr{I}}_{F,\gamma^\h}
:=\int_{-\infty}^{\infty} dF(X^1)({\gamma^{\h}(t)})dt
\label{Integ-hom}
\end{equation}
by taking any sequence $\{T_j\}_{-\infty}^\infty$
 such that $\lim_{j\to\pm\infty}T_j=\pm\infty$.

We now state our main results for persistence of homoclinic orbits and first integrals.

%%%begin:thm:main5%%%%%%%%%%%%%%%%%%%%%%%%%%%%%
\begin{thm}
\label{thm:main5}
Assume that (A1') and (A2') hold
and that there exists a periodic orbit $\gamma^\mathrm{p}_\varepsilon$
depending $C^2$-smoothly on $\varepsilon$ in \eqref{sys}
such that $\gamma^\mathrm{p}_0=\gamma^\mathrm{p}$.
If the perturbed system \eqref{sys} has a homoclinic orbit $\gamma^\h_\varepsilon$
to $\gamma_\epsilon^\mathrm{p}$ depending $C^2$-smoothly on $\varepsilon$
such that $\gamma^\h_0=\gamma^\h$,
then the limit in the right hand side of \eqref{Integ-hom-per} exists
and $\tilde{\mathscr{I}}_{F,\gamma^\h}=0$.
\end{thm}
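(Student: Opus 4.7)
The plan is to generalize the telescoping argument from the proof of Theorem~\ref{thm:main2} to the homoclinic setting, replacing the periodic boundary condition at $t=0$ and $t=T$ by returns of the orbit to the Poincar\'e section $\Sigma$ pictured in Fig.~\ref{fig:2a}. First, I would establish the perturbed analogue of the geometric picture: since $\gamma^\p_\epsilon$ is $C^2$-close to $\gamma^\p$ and the latter crosses $\Sigma$ transversely at $x_0$, the implicit function theorem provides a unique nearby transverse crossing $x_0^\epsilon\in\Sigma$ of $\gamma^\p_\epsilon$. Because $\gamma^\h_\epsilon$ is $C^2$-close to $\gamma^\h$ and converges to $\gamma^\p_\epsilon$ in forward and backward time, it likewise crosses $\Sigma$ transversely at a sequence of times $T_j^\epsilon$ with $T_j^\epsilon\to T_j$ as $\epsilon\to 0$ and $\gamma^\h_\epsilon(T_j^\epsilon)\to x_0^\epsilon$ as $j\to\pm\infty$.

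Second, I would apply the fundamental theorem of calculus to $t\mapsto F(\gamma^\h_\epsilon(t))$, which is well-defined because $F$ is given on a neighborhood of $\Gamma^\h$ that also contains $\gamma^\h_\epsilon$ for small $\epsilon$:
\[
\int_{T_{-k}^\epsilon}^{T_k^\epsilon} dF(X_\epsilon)(\gamma^\h_\epsilon(t))\,dt=F(\gamma^\h_\epsilon(T_k^\epsilon))-F(\gamma^\h_\epsilon(T_{-k}^\epsilon)).
\]
Fixing $\epsilon>0$ and sending $k\to\infty$, continuity of $F$ makes the right-hand side tend to $F(x_0^\epsilon)-F(x_0^\epsilon)=0$, so the improper integral on the left (along this sequence of crossings) vanishes for every small $\epsilon$. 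Expanding the integrand by $dF(X^0)\equiv 0$ near $\Gamma^\h$ gives $dF(X_\epsilon)(\gamma^\h_\epsilon(t))=\epsilon\,dF(X^1)(\gamma^\h(t))+O(\epsilon^2)$; combined with $T_j^\epsilon=T_j+O(\epsilon)$, this should yield $0=\epsilon\,\tilde{\mathscr{I}}_{F,\gamma^\h}+o(\epsilon)$, and dividing by $\epsilon$ before sending $\epsilon\to 0$ concludes $\tilde{\mathscr{I}}_{F,\gamma^\h}=0$.

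The main obstacle is the uniformity in $k$ of the $\epsilon$-expansion: the interval $[T_{-k}^\epsilon,T_k^\epsilon]$ has length diverging with $k$, so the naive remainder bound $O(\epsilon^2)\cdot(T_k^\epsilon-T_{-k}^\epsilon)$ is inadmissible, and convergence of the defining limit for $\tilde{\mathscr{I}}_{F,\gamma^\h}$ itself requires justification. My expectation is to analyze the error segment-by-segment between consecutive crossings $T_j^\epsilon$ and $T_{j+1}^\epsilon$, exploiting the exponential convergence of $\gamma^\h_\epsilon$ toward $\gamma^\p_\epsilon$ (a consequence of the hyperbolic structure underlying the existence of the homoclinic orbit) to make the per-segment errors summable. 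A helpful ingredient here is that Theorem~\ref{thm:main2} applied to $\gamma^\p$ already forces $\int_0^T dF(X^1)(\gamma^\p(t))\,dt=0$, which is exactly what allows the asymptotic segment contributions to the sum defining $\tilde{\mathscr{I}}_{F,\gamma^\h}$ to cancel and thereby guarantees the existence of the limit.
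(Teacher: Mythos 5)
Your skeleton coincides with the paper's proof: the same Poincar\'e-section setup with transverse crossings $T_j^\epsilon$ accumulating at $x_\epsilon=\gamma^\p_\epsilon(0)$, the same telescoping identity $\int_{T_{-k}^\epsilon}^{T_k^\epsilon}dF(X_\epsilon)(\gamma^\h_\epsilon(t))\,dt=F(\gamma^\h_\epsilon(T_k^\epsilon))-F(\gamma^\h_\epsilon(T_{-k}^\epsilon))\to 0$, and the same expansion $dF(X_\epsilon)(\gamma^\h_\epsilon(t))=\epsilon\,dF(X^1)(\gamma^\h(t))+O(\epsilon^2)$ together with $T_j^\epsilon=T_j+O(\epsilon)$. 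Where you part ways with the paper is in how the two limits $k\to\infty$ and $\epsilon\to0$ are combined. The paper never expands the integrand on an unbounded interval: it fixes a $\delta$-neighborhood of $x_0$, chooses $k=k(\delta)$ so that the crossings $\gamma^\h(T_{\pm j})$ lie in it for $j>k$, then takes $\epsilon$ small depending on $k$ (so the $O(\epsilon^2)$ remainder is only used on the compact window $[T_{-k}^\epsilon,T_k^\epsilon]$, where its $k$-dependent constant is harmless), and finally lets $\delta\to0$; the existence of the limit in \eqref{Integ-hom-per} and its vanishing are read off from \eqref{eqn:thm5a} and \eqref{eqn:thm5b} in that order of quantifiers. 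You instead propose a global segment-by-segment summability argument.

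Two caveats about your completion strategy. First, the ``exponential convergence of $\gamma^\h_\epsilon$ toward $\gamma^\p_\epsilon$'' is not available from the hypotheses: (A1') does not assume $\gamma^\p$ is hyperbolic (indeed the presence of the first integral $F$ forces an extra unit Floquet multiplier beyond the trivial one, and the paper's examples include homoclinic orbits to non-hyperbolic objects), so relying on an exponential rate proves a strictly weaker statement than the theorem as stated. Second, your observation that Theorem~\ref{thm:main2} applied to $\gamma^\p$ yields $\int_0^T dF(X^1)(\gamma^\p(t))\,dt=0$ is correct and genuinely pertinent --- it explains why the conditionally convergent limit in \eqref{Integ-hom-per} has any chance of existing, since the per-return contributions tend to exactly this quantity --- but vanishing of the limiting per-segment contribution is necessary, not sufficient, for convergence of the sum, so it cannot by itself ``guarantee the existence of the limit''; you would still need the (unjustified) decay rate to conclude. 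If you adopt the paper's order of quantifiers instead, neither of these extra ingredients is invoked.
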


\begin{proof}
%Taking into account the modifications in the proof of Theorem~\ref{thm:main4},
%the proof of this theorem is almost  the same as Theorem~\ref{thm:main2}. 
Assume that the hypotheses of this theorem hold,
$\gamma^\p$ is not an equilibrium but periodic orbit,
and the system \eqref{sys} has a homoclinic orbit $\gamma^\h_\varepsilon=\gamma^\h+O(\epsilon)$
to a periodic orbit $\gamma_\epsilon^\p=\gamma^\p+O(\epsilon)$.
For $\epsilon>0$ sufficiently small,
 the periodic orbit $\gamma_\epsilon^\p$
 intersects the Poincar\'e section $\Sigma$ transversely, say at $t=0$.
Similarly, $\gamma^\h_\epsilon$ intersects $\Sigma$ transversely
infinitely many times, say at $T_j^\epsilon\in\Rset$ with $T_{j+1}^\epsilon<T_j^\epsilon$,
$j\in\Zset$,
such that $\lim_{j\to\pm\infty}T_j^\epsilon=\pm\infty$.
Moreover,
%Since the associated Poincar\'e map depends on $\epsilon$ smoothly,
%we denote the intersection time
%$T_j^\epsilon=T_j+O(\epsilon)$ which satisfies
\[
\lim_{j\to\pm\infty} \gamma^\h(T^\epsilon_j)=x_\epsilon:=\gamma_\epsilon^\p(0).
\]
We easily see that
%\begin{align*}
%\int_{-\infty}^{\infty}  dF(X_\epsilon)({\gamma^\h_\epsilon}(t))dt
%=F_\varepsilon(x^\varepsilon_0)-F_\varepsilon(x^\varepsilon_0)=0.
%\end{align*}
%or
\begin{align}
\lim_{k \to +\infty} \int_{T^\epsilon_{-k}}^{T^\epsilon_k}  dF(X_\epsilon)({\gamma^\h_\epsilon}(t))dt
=&\lim_{k \to +\infty}\left(F({\gamma^\h_\varepsilon}(T_k^\varepsilon))
-F({\gamma^\h_\varepsilon}(T_{-k}^\varepsilon))\right)%\notag\\
%=&F(x_\varepsilon)-F_\varepsilon(x_\varepsilon)
=0.
\label{eqn:thm5a}
\end{align}

%Since $\M$ is a manifold, the canonical Euclidian metric induces a Riemannian metric on a% neighborhood of a point (one can extends this metric to global one by using a partition of unity but we only need local one
Introduce a metric in a neighborhood of $x_0$
 using the standard Euclidean one in the coordinates.
For $\delta>0$ sufficiently small,
 let $k>0$ be an integer such that $\gamma^\h(T_{\pm j})$ lie
 in a $\delta$-neighborhood of $x_0$ for $j>k$.
We can choose $\epsilon>0$ sufficiently small such that on $[T_{-k}^\epsilon,T_{k}^\epsilon]$
\[
\gamma_\epsilon^\h(t)=\gamma^\h(t)+O(\epsilon),
\]
which yields $T_j^\epsilon=T_j+O(\epsilon)$ for $|j|\le k$ and
\[
dF(X_\varepsilon)({\gamma^\h_\varepsilon}(t))
=\epsilon dF(X^1)({\gamma^\h}(t))+O(\epsilon^2)
\]
since $dF(X^0)=0$.
Hence,
\begin{equation}
\int_{T^\epsilon_{-k}}^{T^\epsilon_k}dF(X_\varepsilon)({\gamma^\h_\varepsilon}(t))dt
=\epsilon\int_{T^\epsilon_{-k}}^{T^\epsilon_k}dF(X^1)({\gamma^\h}(t))dt+O(\epsilon^2).
\label{eqn:thm5b}
\end{equation}
Taking $\delta\to 0$, we have $T_{\pm k}^\epsilon\to\pm\infty$,
 so that by \eqref{eqn:thm5a} and \eqref{eqn:thm5b}
the limit in the right hand side of \eqref{Integ-hom-per} exists and it must be zero.
%On the other hand, since $\dot{\gamma}^\h(t)=X^0(\gamma^\h(t))$, we have
%\begin{align*}
%\int_{-\infty}^{\infty}  dF(X_\epsilon)({\gamma^\h_\epsilon}(t))dt
%=\varepsilon\tilde{\mathscr{I}}_{F,\gamma^\h}+O(\varepsilon^2)
%\end{align*}
%or
%\begin{align*}
%\int_{T^\epsilon_{-k_0}}^{T^\epsilon_{k_0}}
%dF(X_\varepsilon)({\gamma^\h_\varepsilon}(t))dt
%=\lim_{k \to +\infty} \int_{T^\epsilon_{-k}}^{T^\epsilon_k}
%=\varepsilon\tilde{\mathscr{I}}_{F,\gamma^\h}+O(\varepsilon^2)
%\end{align*}
%and they imply $\tilde{\mathscr{I}}_{F,\gamma^\h}=0$.
\end{proof}
%%%end:thm:main5%%%%%%%%%%%%%%%%%%%%%%%%%%%%%

%%%begin:thm:main4%%%%%%%%%%%%%%%%%%%%%%%%%%%
\begin{thm}
\label{thm:main4}
Assume that (A1') and (A2') hold.
If the perturbed system \eqref{sys} has a $C^3$ first integral $F_\varepsilon$
depending $C^2$-smoothly on $\varepsilon$ near $\Gamma^\h$ such that $F_0=F$,
then  the limit in the right hand side of \eqref{Integ-hom-per} exists
and $\tilde{\mathscr{I}}_{F,\gamma^\h}=0$.
\end{thm}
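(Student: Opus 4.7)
The plan is to follow the spirit of the proof of Theorem~\ref{thm:main1}, but with the crucial modification that the argument cannot tolerate integration of an $O(\varepsilon^2)$ remainder over the unbounded interval $[T_{-k},T_k]$ as $k\to+\infty$. A verbatim transcription of the earlier argument would produce an error of size $O(\varepsilon^2(T_k-T_{-k}))$, which is uncontrolled in $k$; this is what I expect to be the principal obstacle. I would circumvent it by extracting an \emph{exact} first-order identity from $dF_\varepsilon(X_\varepsilon)\equiv 0$ and integrating that identity instead of integrating an asymptotic expansion.

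The key step is to match coefficients of $\varepsilon$ in $dF_\varepsilon(X_\varepsilon)\equiv 0$. Writing $F_\varepsilon = F+\varepsilon F^1+O(\varepsilon^2)$, which is permissible since $F_\varepsilon$ is $C^2$ in $\varepsilon$, and using $X_\varepsilon=X^0+\varepsilon X^1+O(\varepsilon^2)$, the zeroth-order term vanishes by (A2'), and the coefficient of $\varepsilon$ yields the pointwise identity
\begin{equation*}
dF^1(X^0)+dF(X^1)=0
\end{equation*}
on the neighborhood of $\Gamma^\h$ on which $F_\varepsilon$ is defined. (Equivalently, one differentiates the identity $dF_\varepsilon(X_\varepsilon)(x)=0$ in $\varepsilon$ at $\varepsilon=0$.) Here no asymptotic slack appears; the equality is exact.

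Next I would evaluate this identity along the unperturbed homoclinic orbit. Since $\gamma^\h$ is an integral curve of $X^0$, one has $dF^1(X^0)(\gamma^\h(t))=\tfrac{d}{dt}F^1(\gamma^\h(t))$, and integrating over $[T_{-k},T_k]$ yields the exact telescoping identity
\begin{equation*}
\int_{T_{-k}}^{T_k} dF(X^1)(\gamma^\h(t))\,dt = -\bigl[F^1(\gamma^\h(T_k))-F^1(\gamma^\h(T_{-k}))\bigr],
\end{equation*}
so that the difficulty coming from the unbounded interval disappears. Finally, by the construction of the times $T_j$ via the Poincar\'e section $\Sigma$ (or, in the subcase where $\gamma^\p$ is an equilibrium, via any sequence $T_j\to\pm\infty$), we have $\gamma^\h(T_{\pm k})\to x_0$ as $k\to+\infty$, and continuity of $F^1$ makes the right-hand side converge to $F^1(x_0)-F^1(x_0)=0$. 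This simultaneously shows that the limit defining $\tilde{\mathscr{I}}_{F,\gamma^\h}$ exists and that it equals zero.
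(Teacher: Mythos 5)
Your proof is correct, and its skeleton is the same as the paper's: both arguments integrate along $\gamma^\h$ between the section-crossing times $T_{-k}$ and $T_k$, telescope using the fact that $\gamma^\h$ is an integral curve of $X^0$, and conclude from $\gamma^\h(T_{\pm k})\to x_0$ together with continuity. Where you differ is exactly at the point you flagged. The paper works with the asymptotic relation \eqref{eqn:main2b} and writes \eqref{eqn:thm4b} with a pointwise $O(\varepsilon^2)$ remainder integrated over $[T_{-k},T_k]$, then sends $k\to+\infty$ ``as in the proof of Theorem~\ref{thm:main1}''; taken literally this leaves an error of size $O(\varepsilon^2(T_k-T_{-k}))$ unaccounted for, the intended reading being that one divides by $\varepsilon$ and lets $\varepsilon\to0$ for each fixed $k$ before letting $k\to+\infty$. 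Your device of extracting the exact first-order identity $dF^1(X^0)+dF(X^1)=0$ (justified by differentiating $dF_\varepsilon(X_\varepsilon)=0$ in $\varepsilon$ at $\varepsilon=0$, which the assumed $C^2$-dependence permits) makes this step explicit and produces the exact, $\varepsilon$-free telescoping identity
$\int_{T_{-k}}^{T_k}dF(X^1)(\gamma^\h(t))\,dt
=-\bigl(F^1(\gamma^\h(T_k))-F^1(\gamma^\h(T_{-k}))\bigr)$,
from which both the existence of the limit in \eqref{Integ-hom-per} and its vanishing follow at once. The only hypothesis you implicitly use is that $F^1$ is defined and continuous at $x_0\in\Gamma^\p$, which holds because the theorem places $F_\varepsilon$ on a neighborhood of $\Gamma^\h\supset\Gamma^\p$. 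In short: same route as the paper, but your version is the more careful one at the single delicate step, and no gap remains.
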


\begin{proof}
Assume that the hypotheses of the theorem hold,
$\gamma^\p$ is not an equilibrium but periodic orbit,
and the system \eqref{sys} has a first integral
$F_\varepsilon=F+\varepsilon F^1+O(\varepsilon^2)$ near $\Gamma^\h$.
%When $\gamma^\p$ is an equilibrium, we immediately obtain
%\begin{align*}
%\int_{-\infty}^{\infty}  dF_\varepsilon(X^0)({\gamma^\h}(t))dt
%=F_\varepsilon(x_0)-F_\varepsilon(x_0)=0
%\end{align*}
We compute
\begin{align}
\lim_{k \to +\infty} \int_{T_{-k}}^{T_k}  dF_\varepsilon(X^0)({\gamma^\h}(t))dt
=&\lim_{k \to +\infty}\left(F_\varepsilon({\gamma^\h}(T_k))
-F_\varepsilon({\gamma^\h}(T_{-k}))\right)%\notag\\
%=&F_\varepsilon(x_0)-F_\varepsilon(x_0)
=0.
\label{eqn:thm4a}
\end{align}
On the other hand, by \eqref{eqn:main2b}
%\[
%dF_\epsilon(X_\varepsilon)({\gamma^\h}(t))
%=\epsilon dF(X^1)({\gamma^\h}(t))+O(\epsilon^2)
%\]
%by $dF(X^0)=0$, so that
%since $\dot{\gamma}^\h(t)=X^0(\gamma^\h(t))$, we have
\begin{equation}
\int_{T_{-k}}^{T_k}  dF(X^0)({\gamma^\h}(t))dt
=-\epsilon\int_{T_{-k}}^{T_k} dF(X^1)({\gamma^\h}(t))dt+O(\varepsilon^2).
\label{eqn:thm4b}
\end{equation}
As in the proof of Theorem~\ref{thm:main1},
it follows from \eqref{eqn:thm4a} and \eqref{eqn:thm4b}
that the limit in the right hand side of \eqref{Integ-hom-per} exists and it must be zero.
%Thus, we obtain the desired result.
\end{proof}
%%%end:thm:main4%%%%%%%%%%%%%%%%%%%%%%%%%%%%%%

\begin{rmk}
\hspace{1em}\\[-2.5ex]
\begin{enumerate}
\setlength{\leftskip}{-1.8em}
\item[(i)]
In the proofs of Theorems~\ref{thm:main5} and \ref{thm:main4},
 when $\gamma^\p$ is an equilibrium,
 we only have to choose any strictly monotonically increasing and diverging sequences
 $\{T_j^\epsilon\}_{-\infty}^\infty$, $\{T_j\}_{-\infty}^\infty$
 such that $T_j^\epsilon=T_j+O(\epsilon)$, $j\in\Zset$,
and to apply the same arguments.
\item[(ii)]
In Theorem~\ref{thm:main5}, if the periodic orbit (or equilibrium) $\gamma^\p$ is hyperbolic,
then the condition on existence of $\gamma_\epsilon^\p$ is not needed
since such a periodic orbit (or equlibrium) necessarily exists. 
\end{enumerate}
\end{rmk}

Theorems~\ref{thm:main5} and \ref{thm:main4} mean that
if $\tilde{\mathscr{I}}_{F,\gamma^\h}\neq0$, 
then neither the homoclinic orbit $\gamma^\h$
nor first integral $F$ persists in \eqref{sys} for $\epsilon>0$.

% **********************************************************
% Section 3
% **********************************************************
\section{Commutative Vector Fields}\label{cvf}
In this section, we %present necessary conditions for 
discuss persistence of periodic and homoclinic orbits and commutative vector fields for \eqref{sys}.
% {We impose additional smoothness on the manifold $\M$ and the vector field $X_\epsilon$:
% $\M$ is $C^4$ and $X_\epsilon$ is $C^3$ with respect to $x$.}

%we develop some results on adjoint variational equations and cotangent lifts.
\subsection{%Prerequisites for the statement: 
Variational and adjoint variational equations}\label{VEAVE}
%We first give a definition of variational equations.
%
Before stating the main results, we give some preliminary results
on variational and adjoint variational equations.
A similar treatment in a complex setting are found in \cite{Ad, CR88,MR01}.
For the reader's convenience, some auxiliary materials are provided in Appendix A.

Let $\M$ be an $n$-dimensional paracompact oriented $C^3$ real manifold
as in Section~2.
Let $X$ be a $C^2$ vector field on $\M$
and let $\Gamma_\phi$ be an integral curve
given by a non-stationary %particular
solution $x=\phi(t)$ to the associated differential equation
\begin{equation}
\dot{x}=X(x).
\label{eqn:X}
\end{equation} %associated with $X$.
The immersion $i:\Gamma_\phi\to\M$ induces a subbundle 
$T_{\Gamma_\phi}:=i^*T\M$ of the vector bundle $T\M$,
where $i^\ast$ represents the pullback of $i$.
Let $s:\Gamma_\phi\to T_{\Gamma_\phi}$ be a $C^1$ section of $T_{\Gamma_\phi}$.
We define the \emph{variational equation} (VE) of $X$ along $\Gamma_\phi$ as
\begin{equation}\label{VEconn}
\nabla s:=dt\otimes\mathcal{L}_XY|_{\Gamma_\phi}=0,
\end{equation}
where ``$\otimes$'' represents the tensor product,
$Y$ is any $C^1$ vector field extension of $s$ to $\M$,
$\mathcal{L}_X$ represents the Lie derivative along $X$,
and ``$dt\otimes$'' is frequently omitted in references.
Here $\nabla$ is a connection of $T_{\Gamma_\phi}$,
and $s$ is a horizontal section of $\nabla$
 if it satisfies the VE \eqref{VEconn}
 (see Appendix~A.1).
Locally, Eq.~\eqref{VEconn} is expressed as
\begin{equation}\label{VEloc}
\frac{d\hat{U}}{dt}=\frac{\partial X}{\partial x}(\phi(t))\hat{U}
\end{equation}
in the frame $\displaystyle
\left(\frac{\partial}{\partial x_1},\ldots,\displaystyle\frac{\partial}{\partial x_n}\right)$
associated with the coordinates $(x_1, ..., x_n)$,
where
\[
s=\sum_{j=1}^n \Xi_j \frac{\partial}{\partial x_j}.
\]
See Appendix~A.2.1 %\ref{AppendixA}
for the derivation of \eqref{VEloc}.

%We second consider adjoint variational equations to (VE).
Let $T^*_{\Gamma_\phi}$ be the dual bundle of $T_{\Gamma_\phi}$,
 and let $\alpha:\Gamma_\phi\to T^*_{\Gamma_\phi}$ be a $C^1$ section
 of $T^*_{\Gamma_\phi}$.

\begin{lem}
\label{lem:AVE}
%For the variational equation $\nabla$ of $X$ along $\Gamma_\phi$ (VE),
The dual connection $\nabla^*$ of $\nabla$  (see Appendix~A.1) is given by
\begin{equation}
\nabla^* \alpha=\left.dt\otimes\mathcal{L}_X \omega\right|_{\Gamma_\phi},
\label{eqn:lemAVE}
\end{equation}
where $\omega:\M\to T^\ast\M$ is any $C^1$ differential $1$-form extension of $\alpha$.
\end{lem}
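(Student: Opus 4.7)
The plan is to verify the lemma by checking the defining identity of a dual connection: for any $C^1$ sections $s$ of $T_{\Gamma_\phi}$ and $\alpha$ of $T^*_{\Gamma_\phi}$, the dual connection $\nabla^*$ is characterized uniquely by
\begin{equation*}
d\langle \alpha, s\rangle = \langle \nabla^*\alpha,\, s\rangle + \langle \alpha,\, \nabla s\rangle,
\end{equation*}
as recalled in Appendix~A.1. Hence it suffices to show that the right-hand side of \eqref{eqn:lemAVE} fits into this identity.

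First I would choose a $C^1$ vector-field extension $Y$ of $s$ and the given $C^1$ 1-form extension $\omega$ of $\alpha$, and invoke the standard Leibniz rule for the Lie derivative of a contraction,
\begin{equation*}
\mathcal{L}_X\langle \omega, Y\rangle = \langle \mathcal{L}_X\omega, Y\rangle + \langle \omega, \mathcal{L}_X Y\rangle,
\end{equation*}
interpreted as an equality of $C^1$ functions on $\M$. Restricting this identity to $\Gamma_\phi$ and using $\mathcal{L}_X f|_{\Gamma_\phi} = \frac{d}{dt}(f\circ\phi)$ for any $C^1$ function $f$ (since $\phi$ is an integral curve of $X$), the left-hand side becomes $\frac{d}{dt}\langle \alpha, s\rangle$. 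Tensoring through by $dt$ and invoking the definition \eqref{VEconn} of $\nabla$, I would obtain
\begin{equation*}
d\langle \alpha, s\rangle = \langle dt\otimes\mathcal{L}_X\omega|_{\Gamma_\phi},\, s\rangle + \langle \alpha,\, \nabla s\rangle,
\end{equation*}
from which the asserted formula $\nabla^*\alpha = dt\otimes\mathcal{L}_X\omega|_{\Gamma_\phi}$ can be read off by uniqueness.

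The main obstacle I anticipate is well-definedness: a priori the quantity $\mathcal{L}_X\omega|_{\Gamma_\phi}$ could depend on the chosen extension $\omega$, whereas $\nabla^*\alpha$ depends only on $\alpha$. To dispose of this, I would argue that any two extensions differ by a 1-form $\omega$ on $\M$ whose pointwise restriction to $\Gamma_\phi$ vanishes as a covector, and that for such an $\omega$ one has $\mathcal{L}_X\omega|_{\Gamma_\phi} = 0$. Writing $\omega = \sum_j \eta_j\, dx^j$ in local coordinates with $\eta_j|_{\Gamma_\phi}\equiv 0$, the local expression
\begin{equation*}
(\mathcal{L}_X\omega)_j = X^i\partial_i\eta_j + \eta_i\partial_j X^i
\end{equation*}
shows that the second summand vanishes trivially on $\Gamma_\phi$, while the first equals $\frac{d}{dt}(\eta_j\circ\phi)\equiv 0$ since $X$ is tangent to $\Gamma_\phi$; linearity then gives independence of the extension. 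Optionally, I could also record the resulting local expression, which, in the coframe dual to $(\partial/\partial x_1,\ldots,\partial/\partial x_n)$, is the classical adjoint variational equation $dV/dt = -(\partial X/\partial x)^{\!\top}(\phi(t))\,V$, complementing formula \eqref{VEloc}.
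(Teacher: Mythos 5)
Your proof is correct and follows essentially the same route as the paper's: both apply the Leibniz rule for $\mathcal{L}_X$ to the pairing $\langle Y,\omega\rangle$, restrict to the integral curve to identify $d\langle s,\alpha\rangle$ with $\left.dt\otimes\mathcal{L}_X\langle Y,\omega\rangle\right|_{\Gamma_\phi}$, and read off $\nabla^*\alpha$ from the defining identity of the dual connection. Your additional verification that $\left.dt\otimes\mathcal{L}_X\omega\right|_{\Gamma_\phi}$ does not depend on the chosen extension $\omega$ is a sound and welcome supplement that the paper leaves implicit.
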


\begin{proof}
Let $s$ be a section of $T_{\Gamma_\phi}$and let $Y$ be its vector field extension as above.
The Lie derivative $\mathcal{L}_X$ satisfies
$$
\mathcal{L}_X \langle{Y, \omega}\rangle
=\langle{ \mathcal{L}_X Y, \omega}\rangle
+\langle{Y, \mathcal{L}_X \omega}\rangle,
$$
which yields
$$
\left.dt\otimes \mathcal{L}_X \langle{Y, \omega}\rangle\right|_{\Gamma_\phi}
=\langle{ \nabla s, \alpha}\rangle
+\langle{s,\left.dt\otimes\mathcal{L}_X \omega\right|_{\Gamma_\phi}}\rangle
$$
when restricted to $\Gamma_\phi$,
where $\langle\cdot,\cdot\rangle$ denotes the natural pairing by the duality.
On the other hand, since $\Gamma_\phi$ is an integral curve of $X$, we have
$$
d\langle{s, \alpha}\rangle
=\left.d\langle{Y, \omega}\rangle\right|_{\Gamma_\phi}
=\left.dt\otimes X(\langle{Y, \omega}\rangle)\right|_{\Gamma_\phi}
=\left.dt\otimes\mathcal{L}_X\langle{Y, \omega}\rangle\right|_{\Gamma_\phi}.
$$
By definition, we obtain \eqref{eqn:lemAVE}.
\end{proof}

We call
\begin{equation}\label{adjVEconn}
{\nabla}^* \alpha=0
\end{equation}
the \emph{adjoint variational equation} (AVE) of $X$ along $\Gamma_\phi$.
Thus, $\alpha$ is a horizontal section of $\nabla^\ast$
 if it satisfies the AVE \eqref{adjVEconn}.
Locally, Eq.~\eqref{adjVEconn} is expressed as
\begin{equation}\label{adjVEloc}
\frac{d\eta}{dt}=-\left( \frac{\partial X}{\partial x}(\phi(t))\right)^\mathrm{T} \eta
\end{equation}
in the frame $(dx_1,\ldots,d x_n)$,
where the superscript ``{\scriptsize $\mathrm{T}$}'' represents the transpose operator and
\[
\alpha=\sum_{j=1}^n \eta_j  dx_j.
\]
See Appendix~A.2.2 %\ref{AppendixA}
for the derivation of \eqref{adjVEloc}.

%For the connection $\nabla$ (resp. $\nabla^*$), a section $s:T_{\Gamma_\phi}\to \Gamma$ (resp. $\alpha:T^*_\Gamma\to \Gamma$) which satisfies $\nabla s=0$ (resp. $\nabla^* \alpha=0$) is said to be
%a \emph{horizontal section}. This corresponds to a solution of the variational equation \eqref{VEloc}
%(resp. the adjoint variational equation \eqref{adjVEloc}).

\begin{lem}
\label{lem:dF}
%Let $\Gamma_\phi$ be an integral curve of the vector field $X$.
%\begin{itemize}
%\item[(a)]
{\rm(i)} If $X$ has a first integral $F$, then the following hold:
\begin{itemize}
\setlength{\leftskip}{-1.6em}
\item[(ia)]
The section $\alpha=dF|_{\Gamma_\phi}$ of $T^*_{\Gamma_\phi}$
 satisfies the AVE \eqref{adjVEconn} of $X$ along $\Gamma_\phi$;
\item[(ib)]
 $\langle s, dF|_{\Gamma_\phi}\rangle$ is a first integral
 of the VE \eqref{VEconn} of $X$ along $\Gamma_\phi$, i.e.,
 \[
 d\langle s, dF|_{\Gamma_\phi}\rangle=0
 \]
if the section $s$ of $T_{\Gamma_\phi}$ satisfies \eqref{VEconn}.
\end{itemize}
{\rm(ii)} If $X$ has a commutative vector field $Z$, i.e.,
\[
[X,Z]=0,
\]
where $[\cdot,\cdot]$ denotes the Lie bracket, then the following hold:
\begin{itemize}
\setlength{\leftskip}{-1.4em}
\item[(iia)]
The section $s=Z|_{\Gamma_\phi}$ of $T_{\Gamma_\phi}$ satisfies the VE \eqref{VEconn};
\item[(iib)]
$\langle Z|_{\Gamma_\phi}, \alpha\rangle$ is a first integral of the AVE \eqref{adjVEconn}, i.e.,
\[
d\langle Z|_{\Gamma_\phi}, \alpha\rangle=0
\]
if the section $\alpha$ of $T^*_{\Gamma_\phi}$ satisfies \eqref{adjVEconn}.
\end{itemize}
\end{lem}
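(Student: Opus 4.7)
The plan is to derive all four claims directly from the definitions of $\nabla$ and $\nabla^*$ given in \eqref{VEconn} and Lemma~\ref{lem:AVE}, by making judicious choices of extensions to $\M$ of the sections involved.

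First I would establish the two ``horizontality'' statements (ia) and (iia). For (ia), extend $\alpha = dF|_{\Gamma_\phi}$ to all of $\M$ by the globally defined $1$-form $\omega = dF$. Lemma~\ref{lem:AVE} then gives $\nabla^* \alpha = dt\otimes \mathcal{L}_X dF|_{\Gamma_\phi}$, and since the Lie derivative commutes with the exterior derivative,
\[
\mathcal{L}_X dF = d(\mathcal{L}_X F) = d(dF(X)) = 0
\]
because $F$ is a first integral. Hence $\nabla^* \alpha = 0$. Part (iia) is symmetric: extend $s = Z|_{\Gamma_\phi}$ to the global vector field $Y = Z$; then by the definition \eqref{VEconn},
\[
\nabla s = dt\otimes \mathcal{L}_X Z|_{\Gamma_\phi} = dt\otimes [X,Z]|_{\Gamma_\phi} = 0
\]
by the commutativity hypothesis $[X,Z]=0$.

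For the ``first integral'' statements (ib) and (iib), I would first extract from the proof of Lemma~\ref{lem:AVE} the Leibniz identity for the dual pairing,
\[
d\langle s,\alpha\rangle = \langle \nabla s,\alpha\rangle + \langle s, \nabla^*\alpha\rangle,
\]
which is exactly what \emph{defines} the dual connection $\nabla^*$. Given this identity, (ib) follows immediately: the right-hand side vanishes because $\nabla s = 0$ by hypothesis and $\nabla^*(dF|_{\Gamma_\phi}) = 0$ by (ia). Part (iib) is the dual statement, using $\nabla(Z|_{\Gamma_\phi})=0$ from (iia) together with the assumption $\nabla^*\alpha = 0$.

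I do not anticipate a substantive obstacle here, as each step collapses to a one-line identity. The only point that warrants care is the well-definedness of $\nabla s$ and $\nabla^*\alpha$ independently of the chosen extensions $Y$ and $\omega$; this is precisely what makes $\nabla$ and $\nabla^*$ bona fide connections on $T_{\Gamma_\phi}$ and $T^*_{\Gamma_\phi}$, and is already addressed in Appendix~A. Once that is granted, the choices $\omega = dF$ and $Y = Z$ make the computations immediate, and (ib), (iib) fall out of the Leibniz rule applied to horizontal sections.
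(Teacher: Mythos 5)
Your proposal is correct and follows essentially the same route as the paper: reduce (ib) and (iib) to (ia) and (iia) via the defining Leibniz identity $d\langle s,\alpha\rangle=\langle\nabla s,\alpha\rangle+\langle s,\nabla^*\alpha\rangle$ of the dual connection, and verify (ia) by computing $\mathcal{L}_XdF=d(dF(X))=0$ (your ``$\mathcal{L}_X$ commutes with $d$'' is just Cartan's formula applied to the exact form $dF$, which is what the paper invokes) and (iia) by $\mathcal{L}_XZ=[X,Z]=0$. Your remark on well-definedness of $\nabla$ and $\nabla^*$ independently of the chosen extensions is appropriately deferred to Appendix~A, exactly as the paper does.
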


\begin{proof}
Let $s$ and $\alpha$ satisfy the VE \eqref{VEconn} and AVE \eqref{adjVEconn}, respectively.
Since
\begin{align*}
d\langle {s,\alpha}\rangle 
=\langle {\nabla s, \alpha}\rangle+\langle{s,\nabla^* \alpha} \rangle
=\langle {0, \alpha}\rangle+\langle{s,0} \rangle
=0,
\end{align*}
we see that $\langle {s,\alpha}\rangle$ is a constant.
Hence, parts (ib) and (iib) immediately follow from (iia) and (ia), respectively.

Now we show (ia) and (iia).
If $X$ has a first integral $F$, then by Cartan's formula
(see, e.g., Theorem~4.2.3 of \cite{MR99})
%$\mathcal{L}_X \omega=d(i_X \omega)+i_X(d\omega)$
we have
$$
\mathcal{L}_X dF=d(i_X (dF))+i_X(d^2 F)=0,
$$
where $i_X$ denotes the interior product of $X$.
This yields (ia) when restricted to $\Gamma_\phi$.
If $X$ has a commutative vector field $Z$, then we obtain (iia) 
since $\mathcal{L}_X Z|_{\Gamma_\phi}=[X,Z]|_{\Gamma_\phi}=0$.
\end{proof}

%We remark that (a-ii) is well known, and
Similar results to Lemma~\ref{lem:dF} for symplectic connections
can be proven by using the musical isomorphism of symplectic forms 
(see Lemma~4.1 of \cite{MR01} and Chapter~4 of \cite{P92}).

\subsection{%Persistence of commutative vector fields
Periodic orbits}

We turn to the issue of persistence of periodic orbits and commutative vector fields
 in \eqref{sys}.
Instead of (A2), we assume the following on \eqref{sys0}:
\begin{itemize}
\setlength{\leftskip}{-1.2em}
\item[(A3)]
The unperturbed system \eqref{sys0} has a $C^3$ commutative vector field $Z$, i.e.,
\[
[X^0,Z]=0,
\]
near $\Gamma$, such  that it is linearly independent of $X^0$.
\end{itemize}

\begin{lem}
\label{lem:adj}
Under assumption (A1),
 the connection $\nabla^\ast$ of $T^\ast_\Gamma$
 has a nontrivial horizontal section $\omega:\Gamma\to T^\ast_\Gamma$,
 i.e., $\omega$ satisfies the  AVE \eqref{adjVEconn} of $X^0$ along $\Gamma$.
\end{lem}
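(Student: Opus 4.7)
The plan is to exhibit $\omega$ as the parallel transport along $\Gamma$ of a nonzero covector in $T^\ast_{\gamma(0)}\M$ that is fixed by the holonomy of $\nabla^\ast$ around the loop. Since $\Gamma$ is diffeomorphic to $S^1$, a nontrivial horizontal section of $\nabla^\ast$ on $\Gamma$ exists if and only if that holonomy, viewed as a linear automorphism of $T^\ast_{\gamma(0)}\M$, admits $1$ as an eigenvalue. The main (mild) obstacle is exactly this global single-valuedness requirement on the closed loop; local integration of the AVE along an arc is routine, but matching at $t=T$ is a nontrivial condition which I propose to secure by dualizing an ``obvious'' horizontal section of the VE.

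First, I would apply Lemma~\ref{lem:dF}(iia) with $X=Z=X^0$, which trivially commutes with itself, to deduce that $s_0:=X^0|_\Gamma$ is a horizontal section of $\nabla$ along $\Gamma$. Since $\gamma$ is non-stationary by (A1), we have $s_0(\gamma(0))=\dot\gamma(0)\ne 0$. In local coordinates near $\gamma(0)$ this says exactly that $U(t):=\dot\gamma(t)$ is a nonzero, $T$-periodic solution of the VE \eqref{VEloc}, so the monodromy $\Phi(T)$ of \eqref{VEloc} fixes the nonzero vector $U(0)$ and hence has $1$ as an eigenvalue.

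Second, I would transfer this eigenvalue to the AVE via the duality identity $\Psi(t)=(\Phi(t)^{-1})^\T$ between the fundamental matrices of \eqref{VEloc} and \eqref{adjVEloc}, which is immediate from direct differentiation (or, intrinsically, from the compatibility of the pairing $\langle\cdot,\cdot\rangle$ with $\nabla$ and $\nabla^\ast$ used in the proof of Lemma~\ref{lem:AVE}). The AVE monodromy $\Psi(T)=(\Phi(T)^{-1})^\T$ then shares its spectrum with $\Phi(T)^{-1}$, so it too has $1$ as an eigenvalue. Choosing any nonzero $\eta_0\in T^\ast_{\gamma(0)}\M$ with $\Psi(T)\eta_0=\eta_0$ and setting $\omega(\gamma(t)):=\sum_{j=1}^n \eta_j(t)\,dx_j$, where $\eta(t)=\Psi(t)\eta_0$, yields a $T$-periodic, nontrivial section of $T^\ast_\Gamma$ satisfying $\nabla^\ast\omega=0$, as required.
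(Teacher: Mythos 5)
Your proof is correct, but it takes a genuinely different route from the paper's. The paper constructs the horizontal section explicitly: it defines the time-along-the-orbit function $\theta:\Gamma\to[0,T)$, extends it to a function $\Theta$ on a tubular neighborhood of $\Gamma$ via the tubular neighborhood theorem, verifies $(i_{X^0}d\Theta)|_\Gamma=1$, and concludes from Cartan's formula that $\omega=d\Theta|_\Gamma$ is a nontrivial horizontal section (normalized by $\omega(X^0)\equiv 1$). Your argument instead goes through Floquet theory: $\dot{\gamma}$ is a nonzero $T$-periodic solution of the VE, so the monodromy $\Phi(T)$ has $1$ as an eigenvalue; since the AVE monodromy is $(\Phi(T)^{-1})^{\T}$, it also has $1$ as an eigenvalue, and the corresponding eigenvector propagates to a $T$-periodic solution of the AVE. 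This is in fact precisely the observation the authors record in Remark~\ref{rmk:dF}(i), stated there for $\M=\Rset^n$. What your approach buys is economy (no tubular neighborhood theorem, no auxiliary extension $\Theta$) and flexibility (any known periodic solution of the VE dualizes to a horizontal section of $\nabla^\ast$). What the paper's approach buys is an explicit, canonical section with the normalization $\langle X^0,\omega\rangle\equiv 1$; your $\omega$ only satisfies $\langle\dot{\gamma},\omega\rangle\equiv\mathrm{const}$, possibly $0$, which is harmless since Theorems~\ref{thm:main3} and \ref{thm:main3o} need only \emph{some} nontrivial horizontal section. The one point you should make fully explicit is that $\Gamma$ need not lie in a single coordinate chart, so the ``monodromy matrix'' must be understood either as the holonomy of $\nabla^\ast$ computed in a global frame of $T^\ast_\Gamma$ (which exists because $T_\Gamma$ is an orientable bundle over a circle, hence trivial), or intrinsically via the invariance of the pairing $\langle\cdot,\cdot\rangle$ under the two parallel transports; you gesture at both, and either suffices.
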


\begin{proof}
Let $\psi^t$ denote the flow of $X^0$ and let $x_0=\gamma(0)\in\Gamma$.
For any $p\in \Gamma$, there exists a unique time $t_p\in [0,T)$
such that $\gamma(t_p)=\psi^{t_p}(x_0)=p$.
Define $\theta:\Gamma \to [0,T)$ by $\theta(p):=t_p$.
 Since $\theta \circ \gamma=\id$, we have
\begin{align}\label{id}
\frac{d}{dt} \theta(\gamma(t))=1,
\end{align}
where $\id$ represents the identity map.
On the other hand, by the tubular neighborhood theorem
 (e.g., Theorem~5.2 in Chapter 4 of \cite{H76}),
there is a neighborhood $\mathscr{N}(\Gamma)$ of $\Gamma$
which is diffeomorphic to the normal bundle $N_\Gamma$ of $\Gamma$ in $\M$.
Let $f:\mathscr{N}(\Gamma)\to N_\Gamma$ be the diffeomorphism,
and let $\pi: N_\Gamma \to \Gamma$ be the natural projection.
Define a map $\Theta:\mathscr{N}(\Gamma)\to\Rset$ by $\Theta:=f^*\pi^*\theta$.
Since $f|_{\Gamma}=\mathrm{id}$ and $\pi|_{\Gamma}=\mathrm{id}$,
we have
\begin{equation}
\Theta|_{\Gamma}=\theta.
\label{eqn:lemadj}
\end{equation}
Using \eqref{id} and  \eqref{eqn:lemadj}, we show that for $x\in \Gamma$
\begin{align*}
(i_Xd\Theta)_x %=d\Theta(X)_x
=(\mathcal{L}_X\Theta)_x
=\lim_{t\to 0}\frac{\Theta(\psi^t(x))-\Theta(x)}{t}
=\lim_{t\to 0}\frac{\theta(\psi^t(x))-\theta(x)}{t}
=1,
\end{align*}
which yields
\begin{align*}
\mathcal{L}_X(d\Theta)|_\Gamma=(i_Xd^2\Theta+di_Xd\Theta)|_\Gamma
=d((i_Xd\Theta)|_\Gamma)%=d(1)
=0
\end{align*}
by Cartan's formula.
Hence, by Lemma~\ref{lem:AVE} we see that
$\omega=d\Theta|_\Gamma$ is a nontrivial horizontal section of $\nabla^\ast$
%Moreover, this is not the zero section
 since $\theta$ is not a constant.
\end{proof}

\begin{rmk}
\label{rmk:dF}
\hspace{1em}\\[-2.5ex]
\begin{enumerate}
\setlength{\leftskip}{-1.8em}
\item[\rm(i)]
Let $\M=\Rset^n$.
We see that $\dot{\gamma}(t)$ is a periodic solution to the VE \eqref{VEloc}
 and consequently its Floquet exponents (see, e.g., Section~2.4 of \cite{C06}) include one.
Hence, the AVE \eqref{adjVEloc} possesses one as its Floquet exponent
 and consequently it has a periodic solution,
 which provides a horizontal section of $\nabla^\ast$ as guaranteed by Lemma~\ref{lem:adj}.
\item[(ii)]
Assume that (A1) and (A2) hold.
From Lemma~\ref{lem:dF} (ia) we see that
$dF|_\Gamma$ is a horizontal section of $\nabla^\ast$.
\end{enumerate}
\end{rmk}

Let $\omega$ be a horizontal section of $\nabla^\ast$ as stated in Lemma~\ref{lem:adj},
%Since $\omega$ is defined on $\Gamma$, we denote $\omega=\omega_{\gamma(t)}$
%at each point $\gamma(t)\in \Gamma$. 
%Moreover, for a vector field $V$ on $\M$, we can consider a natural paring 
%$\omega(V)({\gamma(t)}):=\langle \omega, V\rangle_{\gamma(t)}$ at $\gamma(t)\in\Gamma$.
 and define the integral
\begin{align}\label{integral2}
\mathscr{J}_{\omega, Z, \gamma}:=\int_{0}^{T} \omega([X^1, Z])({\gamma(t)})dt.
\end{align}
We now state our result on persistence of commutative vector fields.

%%%%%%begin:thm:main3%%%%%%%%%%%%%%%%%%%
\begin{thm}
\label{thm:main3}
Assume that (A1) and (A3) hold.
If the perturbed system \eqref{sys} has a $C^3$ commutative vector field $Z_\varepsilon$
depending $C^2$-smoothly on $\varepsilon$ near $\Gamma$ such that $Z_0=Z$,
then the integral $\mathscr{J}_{\omega, Z,\gamma}$ %must be
is zero.
\end{thm}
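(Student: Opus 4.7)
The plan is to translate the commutation identity $[X_\varepsilon,Z_\varepsilon]=0$ into an inhomogeneous variational equation for the first-order part of $Z_\varepsilon$ along $\Gamma$, and then pair it against a horizontal section $\omega$ of the dual connection $\nabla^\ast$ to turn $\mathscr{J}_{\omega,Z,\gamma}$ into a boundary term that vanishes by periodicity.

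First, writing $X_\varepsilon=X^0+\varepsilon X^1+O(\varepsilon^2)$ and $Z_\varepsilon=Z+\varepsilon Z^1+O(\varepsilon^2)$, where $Z^1:=\partial_\varepsilon Z_\varepsilon|_{\varepsilon=0}$ is a $C^2$ vector field defined in a neighborhood of $\Gamma$, the condition $[X_\varepsilon,Z_\varepsilon]=0$ gives at order $\varepsilon^0$ the hypothesis $[X^0,Z]=0$, and at order $\varepsilon$ the identity
\begin{equation*}
\mathcal{L}_{X^0}Z^1=[X^0,Z^1]=-[X^1,Z]
\end{equation*}
in a neighborhood of $\Gamma$. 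Restricting to $\Gamma$ and using the local expression \eqref{VEconn} for $\nabla$, this means that $s:=Z^1|_\Gamma$ is a $C^1$ section of $T_\Gamma$ satisfying the \emph{inhomogeneous} VE
\begin{equation*}
\nabla s=-\,dt\otimes[X^1,Z]|_\Gamma.
\end{equation*}

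Next, by Lemma~\ref{lem:adj} there is a nontrivial horizontal section $\omega$ of $\nabla^\ast$, i.e. $\nabla^\ast\omega=0$. By the defining property of the dual connection, $d\langle s,\omega\rangle=\langle\nabla s,\omega\rangle+\langle s,\nabla^\ast\omega\rangle$, and so along the integral curve $\gamma(t)$,
\begin{equation*}
\frac{d}{dt}\langle Z^1|_\Gamma,\omega\rangle(\gamma(t))
=\langle -[X^1,Z]|_\Gamma,\omega\rangle(\gamma(t))
=-\,\omega([X^1,Z])(\gamma(t)).
\end{equation*}
Integrating over one period and using that both $Z^1$ (defined in a neighborhood of $\Gamma$) and $\omega$ (a continuous section of $T^\ast_\Gamma$) are single-valued at $\gamma(0)=\gamma(T)$ yields
\begin{equation*}
0=\langle Z^1,\omega\rangle(\gamma(T))-\langle Z^1,\omega\rangle(\gamma(0))
=-\int_0^T\omega([X^1,Z])(\gamma(t))\,dt
=-\mathscr{J}_{\omega,Z,\gamma},
\end{equation*}
which is the claim.

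The argument is essentially a computation once the correct framework is in place; there is no serious analytic obstacle. The only points that require care are (a) ensuring that the horizontal section $\omega$ exists on all of $\Gamma$ as a single-valued, continuous object, which is precisely the content of Lemma~\ref{lem:adj}, and (b) observing that $Z^1|_\Gamma$ is globally defined on $\Gamma$ since $Z_\varepsilon$ is defined in a full neighborhood of $\Gamma$, so that the boundary term in the integration vanishes without any additional hypothesis. The rest is the standard duality pairing between the VE and AVE developed in Section~\ref{VEAVE}.
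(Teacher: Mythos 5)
Your proof is correct, but it takes a genuinely different route from the paper's. The paper proves Theorem~\ref{thm:main3} by passing to the cotangent lift $\hat{X}_\epsilon$ of $X_\epsilon$ on $T^*\!\M$: by Lemma~\ref{lem:vec} the commutative vector field $Z_\epsilon$ becomes a first integral $h_{Z_\epsilon}$ of the lifted Hamiltonian system, the horizontal section $\omega$ from Lemma~\ref{lem:adj} furnishes a lifted $T$-periodic orbit $\hat{\gamma}(t)=(\gamma(t),\omega_{\gamma(t)})$, and the identity $\{h_{X^1},h_Z\}=h_{[X^1,Z]}$ of Lemma~\ref{lem:poisson} gives $\mathscr{I}_{h_Z,\hat{\gamma}}=\mathscr{J}_{\omega,Z,\gamma}$ (Eq.~\eqref{eqn:thm3}), so that Theorem~\ref{thm:main1} applies. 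You instead expand the commutator to first order in $\epsilon$, observe that $s=Z^1|_\Gamma$ solves the inhomogeneous VE $\nabla s=-dt\otimes[X^1,Z]|_\Gamma$, and pair against the horizontal section of $\nabla^\ast$ to exhibit $\mathscr{J}_{\omega,Z,\gamma}$ as a boundary term that vanishes by periodicity. The two arguments are the same computation in different clothing --- the fiber equation \eqref{eqn:cl} of the cotangent lift is precisely the AVE, and your boundary term $\langle Z^1,\omega\rangle(\gamma(T))-\langle Z^1,\omega\rangle(\gamma(0))$ is exactly the $O(\epsilon)$ part of $F_\epsilon(\gamma(T))-F_\epsilon(\gamma(0))$ in the paper's proof of Theorem~\ref{thm:main1} applied to $F_\epsilon=h_{Z_\epsilon}$ --- but yours is more elementary and self-contained, using only the duality machinery of Section~\ref{VEAVE}, whereas the paper's lift-and-reduce scheme is reused wholesale for Theorems~\ref{thm:main3o}, \ref{thm:main6} and \ref{thm:main6o}. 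Two minor points: the identity you invoke is the invariant definition \eqref{VEconn} rather than its ``local expression'' (that is \eqref{VEloc}), and your care about single-valuedness of $\omega$ and $Z^1|_\Gamma$ on $\Gamma$ is well placed; the paper needs the same fact to assert that $\hat{\gamma}$ is genuinely $T$-periodic.
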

%%%%%%end:thm:main3(statement)%%%%%%%%%%%%%%%%%%%

For the proof of Theorems~\ref{thm:main3}
 we use the cotangent lift trick \cite{AZ10},
%which is based on the fact that any cotangent bundle has a symplectic structure,
 and rewrite \eqref{sys} as a Hamiltonian system.
In this situation the persistence of commutative vector fields of \eqref{sys}
 is reduced to that of first integrals of the lifted Hamiltonian system.
We first explain the trick in a general setting, following \cite{AZ10}.
See Chapter~5 of \cite{MR99} for necessary information on Hamiltonian mechanics.

Let $T^*\!\M$ be the cotangent bundle of $\M$
and let $\pi:T^*\!\M\to \M$ be the natural projection.
Define a differential 1-form $\lambda:T^*\!\M\to T^*(T^*\!\M)$,
which is often called a \emph{(Poincar\'e-)Liouville form}, as
\[
\lambda_z=p(d\pi_z (\cdot)),
\]
where $z=(x, p)\in T^*\!\M$. 
Letting $\Omega_0=d\lambda$,
we have a symplectic manifold $(T^*\!\M, \Omega_0)$.
In the local coordinates $(x_1, ..., x_n, p_1, ...,p_n)$,
$\lambda$ and $\Omega_0$ are written as
\[
\lambda=\sum_{k=1}^n p_k dx_k\quad\mbox{and}\quad
\Omega_0=\sum_{k=1}^n dp_k \wedge dx_k,
\]
respectively.

Let $X$ be a smooth vector field on $\M$,
and define a function $h_X:T^*\!\M\to\Rset$ as
\begin{equation}\label{liftfcn}
h_X(x, p)=\langle {p, X(x)}\rangle,
\end{equation}
where $(x,p) \in T^*\!\M$. %where $\langle , \rangle$ is a natural pairing.
Then the Hamiltonian vector field $\hat{X}$ with the Hamiltonian $h_X$
 on the symplectic manifold $(T^*\!\M, \Omega_0)$
 is called the \emph{cotangent lift} of $X$.
Note that %In addition to this, by the local expression \eqref{eqn:cl},
 the smoothness of $\hat{X}$ is less by one than that of $X$.
In the local coordinates $(x_1, ..., x_n, p_1, ...,p_n)$,
 the vector field $\hat{X}$ is expressed as
\begin{equation}
\frac{dx}{dt}=X(x) \left(=\frac{\partial h_X}{\partial p}\right),\quad
\frac{dp}{dt}=-\frac{\partial X(x)}{\partial x}^\mathrm{T}p
 \left(=-\frac{\partial h_X}{\partial x}\right),
\label{eqn:cl}
\end{equation}
%\begin{rmk}
the second equation of which has the same form
 as the AVE \eqref{adjVEloc} when $x=\phi(t)$.
%\end{rmk}

\begin{lem}\label{lem:poisson}
For any vector fields $X$ and $Z$ on $\M$ we have
$$
\{h_X, h_Z\}=h_{[X, Z]}
$$
(see Eq.~\eqref{liftfcn}),
 where $\{\cdot,\cdot\}$ denotes the Poisson bracket for the symplectic form $\Omega_0$.
\end{lem}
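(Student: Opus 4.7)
The plan is to verify the identity by a direct computation in local coordinates, since both sides are smooth functions on $T^\ast\M$ and equality can be checked chartwise. First I would fix canonical coordinates $(x_1,\ldots,x_n,p_1,\ldots,p_n)$ on $T^\ast U$ in which $\Omega_0=\sum_k dp_k\wedge dx_k$ takes the stated form, write $X=\sum_j X^j\,\partial/\partial x_j$ and $Z=\sum_j Z^j\,\partial/\partial x_j$ in the associated frame, and expand the lifted Hamiltonians as
\[
h_X=\sum_{j=1}^n p_j X^j(x),\qquad h_Z=\sum_{j=1}^n p_j Z^j(x),
\]
both of which are linear in the momenta.

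Next I would substitute the partial derivatives $\partial h_X/\partial p_k=X^k$ and $\partial h_X/\partial x_k=\sum_j p_j\,\partial X^j/\partial x_k$ (together with the analogous ones for $h_Z$) into the coordinate formula for the Poisson bracket dictated by \eqref{eqn:cl}, then swap the order of summation and collect terms. The outcome has the shape $\sum_j p_j\bigl(\sum_k X^k\partial_k Z^j-Z^k\partial_k X^j\bigr)$; since the parenthesised combination is exactly the $j$th component of $[X,Z]=XZ-ZX$ in the chosen frame, the final expression is $h_{[X,Z]}$.

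The one point to be careful about is sign bookkeeping: the conventions chosen for $\Omega_0$, for the Hamiltonian vector field defined by \eqref{eqn:cl}, and for the Lie bracket must be kept consistent so that no stray sign survives. This is the only possible obstacle, and it is a mechanical rather than conceptual one. As a coordinate-free sanity check I would note that $\widehat{[X,Z]}$ and $[\hat X,\hat Z]$ both project to $[X,Z]$ under $\pi$ and both preserve the Liouville form $\lambda$, so they coincide on $T^\ast\M$; combined with the standard relation $[\hat X,\hat Z]=X_{\{h_X,h_Z\}}$ between Hamiltonian vector fields and the Poisson bracket, this forces $h_{[X,Z]}$ and $\{h_X,h_Z\}$ to differ by a locally constant function, which must vanish because both are linear in $p$ and hence zero on the zero section.
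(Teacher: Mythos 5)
Your proposal is correct and is essentially the paper's own proof: the authors also write $h_X=\sum_i p_iX_i$, $h_Z=\sum_j p_jZ_j$ in canonical coordinates, expand the Poisson bracket for $\Omega_0=\sum_k dp_k\wedge dx_k$, swap the order of summation, and recognize $\sum_i p_i[X,Z]_i=h_{[X,Z]}$. The coordinate-free sanity check you append is a nice extra but not part of the paper's argument.
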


\begin{proof}
In the local coordinates $(x_1, ..., x_n, p_1, ..., p_n)$, 
we write 
\[
X=\sum_{i=1}^n X_i \frac{\partial}{\partial x_i},\quad
Z=\sum_{j=1}^n Z_j \frac{\partial}{\partial x_j},\quad
p=\sum_{l=1}^n p_l\,dx^l.
\]
We compute
\begin{align*}
\{h_X, h_Z\}
=&\{\langle p, X(x)\rangle, \langle p, Z(x)\rangle\}
=\left\{\sum_{i=1}^n p_i X_i(x), \sum_{j=1}^n p_j Z_j(x)\right\}\\
=&\sum_{k=1}^n \left(X_k\sum_{i=1}^n p_i \frac{\partial Z_i}{\partial x_k}
-Z_k\sum_{j=1}^n p_j \frac{\partial X_j}{\partial x_k}\right)\\
=&\sum_{i=1}^n p_i \sum_{k=1}^n \left(X_k\frac{\partial Z_i}{\partial x_k}
-Z_k\frac{\partial X_i}{\partial x_k}\right)
=\langle p, [X, Z]\rangle
=h_{[X, Z]},
\end{align*}
which yields the desired result.
\end{proof}

%In \cite{AZ10}:proposition 2, they use following fact.
We also need the following fact,
 which was used in the proof of Proposition~2 of \cite{AZ10}.

\begin{lem}
\label{lem:vec}
If $Z$ is a commutative vector field of $X$,
then $h_Z$ is a first integral for the cotangent lift $\hat{X}$ of $X$.
\end{lem}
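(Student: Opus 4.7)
The plan is to observe that this lemma is essentially a direct corollary of Lemma~\ref{lem:poisson}. The cotangent lift $\hat{X}$ is by definition the Hamiltonian vector field on $(T^*\!\M,\Omega_0)$ with Hamiltonian $h_X$, so a smooth function $h:T^*\!\M\to\Rset$ is a first integral of $\hat{X}$ if and only if its Poisson bracket with $h_X$ vanishes, i.e.\ $\{h_X,h\}=0$. Thus the statement reduces to checking that $\{h_X,h_Z\}\equiv 0$ on $T^*\!\M$.

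First I would invoke Lemma~\ref{lem:poisson} to rewrite
\[
\{h_X,h_Z\}=h_{[X,Z]}.
\]
By hypothesis, $Z$ is a commutative vector field of $X$, so $[X,Z]=0$ identically on $\M$. Then, directly from the definition \eqref{liftfcn} of the lift of a vector field to a function on $T^*\!\M$, one has
\[
h_{[X,Z]}(x,p)=\langle p,[X,Z](x)\rangle =\langle p,0\rangle=0
\]
for every $(x,p)\in T^*\!\M$. Combining these two identities gives $\{h_X,h_Z\}\equiv 0$, so $h_Z$ is a first integral of $\hat{X}$, as desired.

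There is essentially no genuine obstacle here; the only point worth double-checking is the sign convention, namely that the condition for $h_Z$ to be a first integral of the Hamiltonian vector field $\hat{X}=X_{h_X}$ is indeed $\{h_X,h_Z\}=0$ (equivalently $\mathcal{L}_{\hat{X}}h_Z=\{h_Z,h_X\}=0$, which is the same statement by antisymmetry of the Poisson bracket). Once this is noted, the argument is the two-line computation above.
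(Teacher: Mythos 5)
Your proof is correct and follows exactly the same route as the paper: both invoke Lemma~\ref{lem:poisson} to write $dh_Z(\hat{X})=\{h_X,h_Z\}=h_{[X,Z]}$ and then conclude from $[X,Z]=0$. Your extra remark on the sign convention for the Poisson bracket is a reasonable sanity check but adds nothing beyond the paper's two-line argument.
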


\begin{proof}
It follows from Lemma~\ref{lem:poisson} that $dh_Z(\hat{X})=\{h_X, h_Z\}=h_{[X, Z]}$.
Hence, $dh_Z(\hat{X})=0$ if $[X, Z]=0$.
\end{proof}

We are now in a position to prove Theorem~\ref{thm:main3}.

\begin{proof}[Proof of Theorem~\ref{thm:main3}]%\label{proof}
Assume that the hypotheses of the theorem hold.
Let $\hat{X}_\varepsilon$ be the cotangent lift of $X_\varepsilon$.
By Lemma~\ref{lem:adj} there exists a section $\omega$ of $T^\ast_\Gamma$
 satisfying the AVE \eqref{adjVEconn} of $X^0$ along $\Gamma$
and $\hat{\gamma}(t)=(\gamma(t), \omega_{\gamma(t)})$
is a $T$-periondic solution for $\hat{X}_0$.
Moreover, by Lemma~\ref{lem:vec} $\hat{X}_0$ has a first integral $h_Z$.

Suppose that the system \eqref{sys} has a commutative vector field
$Z_\varepsilon=Z+O(\varepsilon)$ near $\Gamma$.
Then by Lemma~\ref{lem:vec} $h_{Z_\varepsilon}=h_Z+O(\varepsilon)$ is a first integral
of $\hat{X}_\varepsilon$ near $\hat{\Gamma}=\{\hat{\gamma}(t)\mid t\in[0,T)\}$.
Using Lemma~\ref{lem:poisson}, we compute
\begin{align}
\mathscr{I}_{h_Z,\hat{\gamma}}
=&\int_0^{T} dh_Z (\hat{X}^1)({\hat{\gamma}(t)})dt
=\int_0^{T} \{h_{X^1}, h_Z\}({\hat{\gamma}(t)})dt\notag\\
=&\int_0^{T} h_{[X^1, Z]}({\hat{\gamma}(t)})dt
=\int_0^{T} \langle \omega, [X^1, Z]\rangle_{\gamma(t)}  dt\notag\\
=&\int_0^T \omega([X^1, Z])_{\gamma(t)}dt
=\mathscr{J}_{\omega, Z, \gamma}
\label{eqn:thm3}
\end{align}
for $\hat{X}_\varepsilon$.
We apply Theorem~\ref{thm:main1} to complete the proof.
\end{proof}

Theorem~\ref{thm:main3} means that
 if $\mathscr{J}_{\omega,Z,\gamma}\neq0$, 
 then the commutative vector field $Z$ does not persist in \eqref{sys} for $\epsilon>0$.

% In Appendix~B we give a proof of Theorem~\ref{thm:main3} in the setting of $\M=\Rset^n$.

As in the proof of Theorem~\ref{thm:main3}, we see that
 if $\gamma_\epsilon(t)$ is a $T_\epsilon$-periodic orbit in \eqref{sys},
 then by Lemma~\ref{lem:adj}
 there exists a section $\omega_\epsilon=\omega+O(\epsilon)$ of $T^\ast_{\Gamma_\epsilon}$
 satisfying the AVE \eqref{adjVEconn} of $X_\epsilon$ along $\Gamma_\epsilon$
 and $\hat{\gamma}(t)=(\gamma_\epsilon(t),\omega_{\epsilon,\gamma_\epsilon(t)})$
 is a $T_\epsilon$-periodic orbit for the cotangent lift $\hat{X}_\epsilon$ of $X_\epsilon$,
 where $\Gamma_\epsilon=\{\gamma_\epsilon(t)\mid t\in[0,T_\epsilon)\}$.
Here the section $\omega$ of $T^\ast_\Gamma$
 satisfies the AVE \eqref{adjVEconn} of $X^0$ along $\Gamma$.
Applying Theorem~\ref{thm:main2} to $\hat{X}_\epsilon$ and using \eqref{eqn:thm3},
we obtain the following result on persistence of periodic orbits.

\begin{thm}
\label{thm:main3o}
Assume that (A1) and (A3) hold.
If the perturbed system \eqref{sys} has  a $T_\varepsilon$-periodic orbit
$\gamma_\varepsilon$ depending $C^2$-smoothly on $\varepsilon$
such that $T_0=T$ and $\gamma_0=\gamma$,
then the integral $\mathscr{J}_{\omega, Z,\gamma}$ %must be
is zero for some section $\omega$ of $T^\ast_\Gamma$
satisfying the AVE \eqref{adjVEconn} of $X^0$ along $\Gamma$.
\end{thm}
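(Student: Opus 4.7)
The plan is to reduce this statement to Theorem~\ref{thm:main2} by passing to the cotangent lift, exactly parallel to how Theorem~\ref{thm:main3} was obtained from Theorem~\ref{thm:main1}. Let $\hat{X}_\epsilon$ denote the cotangent lift of $X_\epsilon$ on the symplectic manifold $(T^\ast\M,\Omega_0)$, with Hamiltonian $h_{X_\epsilon}$. Under assumption (A3), Lemma~\ref{lem:vec} guarantees that $h_Z$ is a first integral of $\hat{X}_0$ defined globally on $T^\ast\M$.

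The next step is to produce a periodic orbit of $\hat{X}_\epsilon$ continuing a periodic orbit of $\hat{X}_0$. Given the periodic orbit $\gamma_\epsilon$ furnished by hypothesis, Lemma~\ref{lem:adj} applied to the vector field $X_\epsilon$ yields a nontrivial horizontal section $\omega_\epsilon$ of $T^\ast_{\Gamma_\epsilon}$ satisfying the AVE of $X_\epsilon$ along $\Gamma_\epsilon$. The construction in the proof of Lemma~\ref{lem:adj} (based on the tubular neighborhood and the time function $\theta$) can be carried out with $C^2$ dependence on $\epsilon$, so we may take $\omega_\epsilon=\omega+O(\epsilon)$ with $\omega$ a horizontal section of $\nabla^\ast$ for $X^0$ along $\Gamma$. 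Because the second line of the local expression \eqref{eqn:cl} for $\hat{X}_\epsilon$ coincides with the AVE \eqref{adjVEloc} along $\Gamma_\epsilon$, the lifted curve $\hat{\gamma}_\epsilon(t)=(\gamma_\epsilon(t),\omega_{\epsilon,\gamma_\epsilon(t)})$ is a $T_\epsilon$-periodic orbit of $\hat{X}_\epsilon$ depending $C^2$-smoothly on $\epsilon$, with $\hat{\gamma}_0=\hat{\gamma}$.

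With these ingredients in place, Theorem~\ref{thm:main2} applies to the lifted system $\hat{X}_\epsilon$ with first integral $h_Z$ and periodic orbit $\hat{\gamma}_\epsilon$, yielding $\mathscr{I}_{h_Z,\hat{\gamma}}=0$. Finally, the identity \eqref{eqn:thm3}, established via Lemma~\ref{lem:poisson} in the proof of Theorem~\ref{thm:main3}, gives $\mathscr{I}_{h_Z,\hat{\gamma}}=\mathscr{J}_{\omega,Z,\gamma}$, and the conclusion follows.

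The main obstacle is the periodicity and smooth $\epsilon$-dependence of the lifted orbit $\hat{\gamma}_\epsilon$: one must verify that the AVE of $X_\epsilon$ along $\Gamma_\epsilon$ admits a horizontal section that closes up over the period $T_\epsilon$ (equivalently, that the Floquet exponent $1$ persists) and varies $C^2$-smoothly with $\epsilon$. Lemma~\ref{lem:adj} supplies such a section via the intrinsic construction $\omega_\epsilon=d\Theta_\epsilon|_{\Gamma_\epsilon}$, and since the tubular neighborhood and associated time function depend smoothly on the vector field, the required $\epsilon$-regularity is automatic. Once this is in hand, the remainder of the argument is a direct invocation of the already-proved Theorem~\ref{thm:main2} together with the Poisson-bracket identity of Lemma~\ref{lem:poisson}.
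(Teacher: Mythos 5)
Your proposal is correct and follows essentially the same route as the paper: the authors likewise apply Lemma~\ref{lem:adj} to $X_\epsilon$ along $\Gamma_\epsilon$ to obtain a horizontal section $\omega_\epsilon=\omega+O(\epsilon)$, lift $\gamma_\epsilon$ to the $T_\epsilon$-periodic orbit $(\gamma_\epsilon(t),\omega_{\epsilon,\gamma_\epsilon(t)})$ of the cotangent lift $\hat{X}_\epsilon$, and then invoke Theorem~\ref{thm:main2} with the first integral $h_Z$ together with the identity \eqref{eqn:thm3}. Your explicit attention to the $C^2$ dependence of $\omega_\epsilon$ on $\epsilon$ is a point the paper passes over silently, but it is the same argument.
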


Theorem~\ref{thm:main3o} means that
 if $\mathscr{J}_{\omega,Z,\gamma}\neq0$
 for any horizontal section $\omega$ of $\nabla^\ast$, 
 then the periodic orbit $\gamma$ does not persist in \eqref{sys} for $\epsilon>0$.
%\newpage

\subsection{Homoclinic orbits}
We next discuss the persistence of homoclinic orbits and commutative vector fields
 in \eqref{sys}.
Instead of (A3) we assume the following on \eqref{sys0}:
\begin{itemize}
\setlength{\leftskip}{-1.2em}
\item[(A3')]
The unperturbed system \eqref{sys0} has a $C^3$ commutative vector field $Z$
 near $\Gamma^\h$, such that it is linearly independent of $X^0$.
\end{itemize}

In the proof of Lemma~\ref{lem:adj},
%for the existence of a horizontal section of the adjoint variational equation,
we did not essentially use the fact that $\gamma(t)$ is periodic.
So we prove the following lemma similarly.

\begin{lem}
\label{lem:adj2}
Under assumption (A1'),
the connection $\nabla^\ast$ of $T^\ast_{\Gamma^{\h\prime}}$
has a nontrivial horizontal section $\omega^\h:\Gamma^{\h\prime}\to T^\ast_{\Gamma^{\h\prime}}$,
i.e., $\omega^\h$ satisfies the AVE \eqref{adjVEconn} of $X^0$ along $\Gamma^{\h\prime}$
where $\Gamma^{\h\prime}:=\{\gamma^\h(t)\mid t\in \Rset\}$.
\end{lem}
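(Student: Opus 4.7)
The plan is to essentially replicate the construction used in the proof of Lemma~\ref{lem:adj}, replacing the time-parameter domain $[0,T)$ by $\Rset$. First I would note that by uniqueness of solutions to \eqref{sys0}, the map $\gamma^\h:\Rset\to\Gamma^{\h\prime}$ is injective (a non-constant integral curve cannot re-visit a point), so there is a well-defined inverse $\theta:\Gamma^{\h\prime}\to\Rset$ given by $\theta(p):=t_p$ where $\gamma^\h(t_p)=p$. This $\theta$ is smooth along the orbit and satisfies $\frac{d}{dt}\theta(\gamma^\h(t))=1$, exactly as in \eqref{id}.

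Next I would extend $\theta$ to a $C^1$ function $\Theta$ defined on a neighborhood $\mathscr{N}(\Gamma^{\h\prime})$ of $\Gamma^{\h\prime}$, using the same tubular construction $\Theta:=f^\ast\pi^\ast\theta$ as in the proof of Lemma~\ref{lem:adj}. The key pointwise identity
\begin{align*}
(i_{X^0}d\Theta)_x
=(\mathcal{L}_{X^0}\Theta)_x
=\lim_{t\to 0}\frac{\Theta(\psi^t(x))-\Theta(x)}{t}
=\lim_{t\to 0}\frac{\theta(\psi^t(x))-\theta(x)}{t}=1
\end{align*}
then holds at every $x\in\Gamma^{\h\prime}$, where $\psi^t$ denotes the flow of $X^0$. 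Cartan's formula immediately yields
\begin{align*}
\mathcal{L}_{X^0}(d\Theta)|_{\Gamma^{\h\prime}}
=(i_{X^0}d^2\Theta+d\,i_{X^0}d\Theta)|_{\Gamma^{\h\prime}}
=d\bigl((i_{X^0}d\Theta)|_{\Gamma^{\h\prime}}\bigr)=0,
\end{align*}
and then Lemma~\ref{lem:AVE} identifies $\omega^\h:=d\Theta|_{\Gamma^{\h\prime}}$ as a horizontal section of $\nabla^\ast$. Nontriviality is immediate since $\theta$ is non-constant on $\Gamma^{\h\prime}$.

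The main technical obstacle, as compared with the periodic case, is that when $\gamma^\h$ is homoclinic to a periodic orbit $\gamma^\p$, $\Gamma^{\h\prime}$ is an injectively immersed but not embedded submanifold of $\M$: its closure contains the full periodic orbit $\Gamma^\p$, so the standard tubular neighborhood theorem does not apply to $\Gamma^{\h\prime}$ as a global subset. I would handle this by observing that the conclusion concerns only a section of $T^\ast_{\Gamma^{\h\prime}}$ and that Lemma~\ref{lem:AVE} gives a pointwise formula for $\nabla^\ast\alpha$ which is independent of the choice of $C^1$ extension $\omega$ of $\alpha$. It therefore suffices to extend $\theta$ in a flow-box neighborhood of each point of $\Gamma^{\h\prime}$, or equivalently to apply the tubular construction on any open sub-arc of $\Gamma^{\h\prime}$ that stays a fixed positive distance from $\Gamma^\p$, which is always possible. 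Different local extensions agree on $\Gamma^{\h\prime}$ after restriction, so the resulting $\omega^\h$ is globally well-defined on $\Gamma^{\h\prime}$ and satisfies the AVE. In the homoclinic-to-equilibrium case this complication does not arise, and the tubular construction can be applied directly, completing the proof.
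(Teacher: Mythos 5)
Your proposal is correct and is essentially the paper's own argument: the paper proves Lemma~\ref{lem:adj2} simply by remarking that the proof of Lemma~\ref{lem:adj} nowhere uses periodicity and carries over verbatim with $\theta:\Gamma^{\h\prime}\to\Rset$ in place of $\theta:\Gamma\to[0,T)$, which is exactly your construction. Your extra care about the tubular neighborhood is reasonable and your local-extension resolution is valid (since $\nabla^\ast\alpha$ along the curve is independent of the choice of $C^1$ extension), though the stated worry is slightly overcautious: $\Gamma^{\h\prime}$ is in fact an embedded (merely non-closed, locally closed) one-dimensional submanifold---its self-accumulation occurs only at points of $\Gamma^\p$, which are not in $\Gamma^{\h\prime}$---so the tubular neighborhood theorem still applies, with a tube that pinches near $\Gamma^\p$; also, injectivity of $\gamma^\h$ follows because a self-intersecting integral curve would be periodic, not merely because it is non-constant.
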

%\begin{rmk}
%As in Remark~\ref{rmk:dF}, when the assumption (A2) holds,
%we can take $\omega^\h=dF|_{\Gamma^\h}$.
%\end{rmk}

Let $\omega^\h$ be such a horizontal section of $\nabla^\ast$ as stated in Lemma~\ref{lem:adj2},
%When $\gamma^\p$ is an equilibrium, define 
 and define
\begin{align}\label{Integ-hom-cvf-per}
\tilde{\mathscr{J}}_{\omega^\h, Z,\gamma^\h}
:=\lim_{k\to +\infty}\int_{T_{-k}}^{T_k} \omega^\h([X^1,Z])_{\gamma^{\h}(t)}dt,
\end{align}
where the sequence $\{T_j\}_{j~-\infty}^{\infty}$ is taken as in \eqref{Integ-hom-per}.
If $\gamma^\p$ is an equilibrium, then Eq.~\eqref{Integ-hom-cvf-per} is reduced to
\begin{align}\label{Integ-hom-cvf-eql}
\tilde{\mathscr{J}}_{\omega^\h, Z,\gamma^\h}
 =\int_{-\infty}^{\infty} \omega^\h([X^1,Z])_{\gamma^{\h}(t)}dt
\end{align}
like \eqref{Integ-hom}.

%%%begin:thm:main6%%%%%%%%%%%%%%%%%%%%%%%%%%%%%
\begin{thm}
\label{thm:main6}
Assume that (A1') and (A3') hold.
If the perturbed system \eqref{sys} has a $C^3$ commutative vector field $Z_\varepsilon$
 depending $C^2$-smoothly on $\varepsilon$ near $\Gamma^\h$ such that $Z_0=Z$, 
 then the limit in the right hand side of \eqref{Integ-hom-cvf-per} exists
 and $\tilde{\mathscr{J}}_{\omega^\h, Z,\gamma}=0$.
\end{thm}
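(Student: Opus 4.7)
The plan is to mirror the cotangent-lift argument used in the proof of Theorem~\ref{thm:main3}, but now reducing to Theorem~\ref{thm:main4} rather than Theorem~\ref{thm:main1}. First I would form the cotangent lift $\hat{X}_\varepsilon$ of $X_\varepsilon$ on the symplectic manifold $(T^\ast\M,\Omega_0)$. By Lemma~\ref{lem:vec}, the commutative vector field $Z_\varepsilon=Z+O(\varepsilon)$ for $X_\varepsilon$ produces a first integral $h_{Z_\varepsilon}=h_Z+O(\varepsilon)$ of $\hat{X}_\varepsilon$ which is globally defined on $T^\ast\M$ and depends $C^2$-smoothly on $\varepsilon$.

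Second, I would use Lemma~\ref{lem:adj2} to fix a nontrivial horizontal section $\omega^\h$ of $\nabla^\ast$ along $\Gamma^{\h\prime}$, so that $\hat{\gamma}^\h(t):=(\gamma^\h(t),\omega^\h_{\gamma^\h(t)})$ is an integral curve of $\hat{X}_0$. To bring the lifted situation into the framework of Theorem~\ref{thm:main4}, I need $\hat{\gamma}^\h$ to be a homoclinic orbit of $\hat{X}_0$ to either an equilibrium or a periodic orbit $\hat{\gamma}^\p$ on $T^\ast\M$. When $\gamma^\p$ is an equilibrium, this is essentially straightforward: the asymptotic behavior of $\omega^\h$ is controlled by the eigenvectors of the adjoint of the linearization of $X^0$ at $\gamma^\p$, and one can arrange that $(\gamma^\p,\omega^\p)$ is an equilibrium of $\hat{X}_0$. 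When $\gamma^\p$ is a genuine periodic orbit, I would combine Floquet theory of the AVE along $\gamma^\p$ (cf.\ Remark~\ref{rmk:dF}(i)) with the Poincar\'e-section analysis used in the proof of Theorem~\ref{thm:main5} to produce a horizontal section $\omega^\p$ along $\Gamma^\p$ such that $\hat{\gamma}^\p(t):=(\gamma^\p(t),\omega^\p_{\gamma^\p(t)})$ is a $T$-periodic orbit of $\hat{X}_0$ to which $\hat{\gamma}^\h$ is asymptotic as $t\to\pm\infty$. This verifies the analogues of (A1') and (A2') for $\hat{X}_0$, with $\hat{\gamma}^\h$ playing the role of the homoclinic orbit and $h_Z$ the first integral.

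Third, applying Theorem~\ref{thm:main4} to the lifted pair $(\hat{X}_\varepsilon,h_{Z_\varepsilon})$ yields that the limit defining $\tilde{\mathscr{I}}_{h_Z,\hat{\gamma}^\h}$ exists and equals zero. Finally, I would reproduce the chain of equalities \eqref{eqn:thm3} in the proof of Theorem~\ref{thm:main3} pointwise on $\hat{\gamma}^\h(t)$, using Lemma~\ref{lem:poisson} to identify
\[
dh_Z(\hat{X}^1)(\hat{\gamma}^\h(t))=\{h_{X^1},h_Z\}(\hat{\gamma}^\h(t))=h_{[X^1,Z]}(\hat{\gamma}^\h(t))=\omega^\h([X^1,Z])_{\gamma^\h(t)},
\]
so that after integrating over $[T_{-k},T_k]$ and passing to the limit one obtains $\tilde{\mathscr{I}}_{h_Z,\hat{\gamma}^\h}=\tilde{\mathscr{J}}_{\omega^\h,Z,\gamma^\h}$, yielding the stated conclusion. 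A parallel argument handles the equilibrium case by replacing the Poincar\'e times $T_j$ with any sequences diverging to $\pm\infty$, as noted in the analogous situation of Theorems~\ref{thm:main5} and \ref{thm:main4}.

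The main obstacle is the periodic-$\gamma^\p$ case of the second step: one must verify that $\omega^\h$ furnished by Lemma~\ref{lem:adj2} on $\Gamma^{\h\prime}$ is genuinely compatible with a horizontal section $\omega^\p$ on $\Gamma^\p$, so that $\hat{\gamma}^\h$ truly limits, through the Poincar\'e returns $\gamma^\h(T_j)\to x_0$, onto a periodic orbit of $\hat{X}_0$. This amounts to controlling the unstable Floquet modes of the AVE along $\gamma^\p$ and may require an initial selection of $\omega^\h$ adapted to the stable/unstable splitting of the AVE, analogous to how one selects decaying solutions of the variational equation along a homoclinic orbit in the classical Melnikov setting.
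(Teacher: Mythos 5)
Your proposal follows essentially the same route as the paper's proof: form the cotangent lift $\hat{X}_\varepsilon$, convert $Z_\varepsilon$ into the first integral $h_{Z_\varepsilon}$ via Lemma~\ref{lem:vec}, use Lemmas~\ref{lem:adj} and \ref{lem:adj2} to lift $\gamma^\p$ and $\gamma^\h$ to a periodic orbit and a homoclinic orbit of $\hat{X}^0$, apply Theorem~\ref{thm:main4}, and identify the resulting integral with $\tilde{\mathscr{J}}_{\omega^\h,Z,\gamma^\h}$ through the chain of equalities \eqref{eqn:thm3}. The compatibility issue you flag in the periodic-$\gamma^\p$ case (that the lifted homoclinic orbit genuinely limits onto the lifted periodic orbit) is a real point of care, but the paper's own proof simply asserts it without further elaboration, so your treatment is if anything more explicit than the original.
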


\begin{proof}
If $\gamma^\p$ is a periodic orbit,
 then by Lemma~\ref{lem:adj} there exists a horizontal section $\omega^\p$
 of $T^\ast_{\Gamma^\p}$ satisfying the AVE \eqref{adjVEconn} of $X^0$ along $\Gamma^\p$
 and $(\gamma^\p(t), \omega^\p_{\gamma^\p(t)})$ is a periodic orbit
 for the cotangent lift $\hat{X}^0$ of $X^0$.
Similarly, by assumptions (A1') and Lemma~\ref{lem:adj2},
 we have a homoclinic orbit $({\gamma^\h}(t),\omega^\h_{\gamma^\h(t)})$
 to the periodic orbit $({\gamma^\p}(t), \omega^\p_{\gamma^\p(t)})$ for $\hat{X}^0$,
 where $\omega^\h$ is a horizontal section of $\nabla^\ast$ for $T^\ast_{\Gamma^\h}$.
By applying Theorem~\ref{thm:main4} to the cotangent lift $\hat{X}_\epsilon$ of $X_\epsilon$,
 the rest of the proof is done similarly as in Theorem~\ref{thm:main3}.
\end{proof}
%%%end:thm:main6%%%%%%%%%%%%%%%%%%%%%%%%%%%%%

Theorem~\ref{thm:main6} means that
 if $\mathscr{J}_{\omega^\h,Z,\gamma^\h}\neq0$, 
 then the commutative vector field $Z$ does not persist in \eqref{sys} for $\epsilon>0$.

{
\begin{rmk}
Using Theorems~\ref{thm:main1}, \ref{thm:main4}, \ref{thm:main3} and \ref{thm:main6},
 we can determine whether given first integrals and commutative vector fields do not persist
 in \eqref{sys} but  there still exist a sufficient number
 of first integrals and commutative vector fields depending smoothly on the parameter $\epsilon$.
For example, the unperturbed system \eqref{sys0}
 may have different first integrals and commutative vector fields which persist.
So we have to overcome this difficulty
 to extend the results of Poincar\'e \cite{P92} and Kozlov \cite{K83,K96}
 and obtain a sufficient condition for such nonintegrability of the perturbed systems.
%In future work, we hope to overcome these difficulties and to apply our results to obtain a nonintegrability theorem for nearly integrable systems.
\end{rmk}}

As in the proof of Theorem~\ref{thm:main6},
 if $\gamma_\epsilon^\p=\gamma^\p+O(\epsilon)$ is a $T_\epsilon$-periodic orbit
 with $T_\epsilon=T+O(\epsilon)$
 and $\gamma_\epsilon^\h=\gamma^\h+O(\epsilon)$ is a homoclinic orbit
 to $\gamma_\epsilon^\p$ in \eqref{sys},
 then by Lemmas~\ref{lem:adj} and \ref{lem:adj2}
 there exist horizontal sections $\omega_\epsilon^\p=\omega^\p+O(\epsilon)$
 and $\omega_\epsilon^\h=\omega^\h+O(\epsilon)$ of $\nabla^\ast$
 for $\Gamma_\epsilon^\p=\{\gamma_\epsilon^\h(t)\mid t\in[0,T_\epsilon)\}$
 and $\Gamma_\epsilon^{\h\prime}=\{\gamma_\epsilon^\h(t)\mid t\in\Rset\}$,
 respectively,
 so that for the cotangent lift $\hat{X}_\epsilon$ of $X_\epsilon$
 $(\gamma_\epsilon^\p(t),\omega^\p_{\epsilon,\gamma^\p(t)})$ is a periodic orbit
 to which $({\gamma_\epsilon^\h}(t),\omega^\h_{\epsilon,\gamma^\h(t)})$ is a homoclinic orbit.
Here the section $\omega^\h$ of $\nabla^\ast$
 satisfies the AVE \eqref{adjVEconn} along $\Gamma^\h$.
Applying Theorem~\ref{thm:main2} to $\hat{X}_\epsilon$, we obtain the following.

\begin{thm}
\label{thm:main6o}
Assume that (A1') and (A3') hold
 and that there exists a periodic orbit $\gamma_\epsilon^\p$
 depending $C^2$-smoothly on $\varepsilon$
 such that $\gamma^\mathrm{p}_0=\gamma^\mathrm{p}$.
If the perturbed system \eqref{sys} has a homoclinic orbit $\gamma_\epsilon^\h$
 depending $C^2$-smoothly on $\varepsilon$ in \eqref{sys}
 such that $\gamma_0^\h=\gamma^\h$,
 then the limit in the right hand side of \eqref{Integ-hom-cvf-per} exists
 and $\tilde{\mathscr{J}}_{\omega^\h, Z,\gamma}=0$
 for some section $\omega$ of $T^\ast_{\Gamma^\h}$
 satisfying the AVE \eqref{adjVEconn} along $\Gamma^\h$.
\end{thm}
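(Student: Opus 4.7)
The plan is to reduce Theorem~\ref{thm:main6o} to Theorem~\ref{thm:main5} via the cotangent lift trick that already drove Theorems~\ref{thm:main3} and \ref{thm:main6}. First I would form the Hamiltonian system $(T^*\M,\Omega_0)$ carrying the cotangent lift $\hat{X}_\epsilon$ of $X_\epsilon$; since $Z$ is commutative with $X^0$, Lemma~\ref{lem:vec} furnishes a first integral $h_Z$ of the unperturbed lift $\hat{X}^0$.

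Next I would lift the unperturbed periodic orbit and homoclinic orbit to orbits of $\hat{X}^0$. Lemma~\ref{lem:adj} produces a horizontal section $\omega^\p:\Gamma^\p\to T^*_{\Gamma^\p}$ of $\nabla^\ast$, so that $(\gamma^\p(t),\omega^\p_{\gamma^\p(t)})$ is a $T$-periodic orbit of $\hat{X}^0$; Lemma~\ref{lem:adj2} produces a horizontal section $\omega^\h:\Gamma^{\h\prime}\to T^*_{\Gamma^{\h\prime}}$, so that $(\gamma^\h(t),\omega^\h_{\gamma^\h(t)})$ is a homoclinic orbit to that periodic orbit. The hypotheses on persistence of $\gamma^\p_\epsilon$ and $\gamma^\h_\epsilon$ then allow me to repeat the construction for $X_\epsilon$ and obtain horizontal sections $\omega^\p_\epsilon=\omega^\p+O(\epsilon)$ and $\omega^\h_\epsilon=\omega^\h+O(\epsilon)$, producing a persistent $T_\epsilon$-periodic orbit $(\gamma^\p_\epsilon(t),\omega^\p_{\epsilon,\gamma^\p_\epsilon(t)})$ with a persistent homoclinic orbit $(\gamma^\h_\epsilon(t),\omega^\h_{\epsilon,\gamma^\h_\epsilon(t)})$ in the cotangent-lifted system.

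With this lifted data in hand, Theorem~\ref{thm:main5} applied to $\hat{X}_\epsilon$ with first integral $h_Z$ forces the existence of
\[
\lim_{k\to+\infty}\int_{T_{-k}}^{T_k} dh_Z(\hat{X}^1)\bigl((\gamma^\h(t),\omega^\h_{\gamma^\h(t)})\bigr)\,dt
\]
together with its vanishing. To identify this integral with $\tilde{\mathscr{J}}_{\omega^\h,Z,\gamma^\h}$ I would reuse the computation appearing in \eqref{eqn:thm3}: Lemma~\ref{lem:poisson} gives $dh_Z(\hat{X}^1)=\{h_{X^1},h_Z\}=h_{[X^1,Z]}$, and pairing against $\omega^\h$ converts the integrand into $\omega^\h([X^1,Z])_{\gamma^\h(t)}$, which is precisely the integrand in \eqref{Integ-hom-cvf-per}. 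This yields both the existence of the limit and its vanishing, as desired.

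The main obstacle I expect is verifying that $\omega^\p_\epsilon$ and $\omega^\h_\epsilon$ genuinely depend $C^2$-smoothly on $\epsilon$, since this smooth dependence is what licenses the application of Theorem~\ref{thm:main5} to $\hat{X}_\epsilon$; the dependence should follow from inspection of the explicit construction in Lemma~\ref{lem:adj} (tubular neighborhood combined with the flow of $X_\epsilon$, both of which depend smoothly on $\epsilon$), extended to the homoclinic setting as in Lemma~\ref{lem:adj2}. A minor additional point is the equilibrium case of $\gamma^\p$, where the Poincar\'e-section construction does not literally apply and one must instead invoke the monotone diverging-sequence observation from the remark following Theorem~\ref{thm:main4} to make sense of the limits $T_{\pm k}^\epsilon\to\pm\infty$.
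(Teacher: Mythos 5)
Your proposal is correct and follows essentially the same route as the paper: the cotangent lift of $X_\epsilon$, the lifted periodic and homoclinic orbits built from the horizontal sections of Lemmas~\ref{lem:adj} and \ref{lem:adj2}, the first integral $h_Z$ from Lemma~\ref{lem:vec}, and the identification of $\tilde{\mathscr{I}}_{h_Z,\hat{\gamma}^\h}$ with $\tilde{\mathscr{J}}_{\omega^\h,Z,\gamma^\h}$ via the computation \eqref{eqn:thm3}. The only discrepancy is that the paper's text says to apply Theorem~\ref{thm:main2} to $\hat{X}_\epsilon$, which appears to be a typographical slip for Theorem~\ref{thm:main5} --- the theorem you correctly invoke.
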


Theorem~\ref{thm:main6o} means that
 if $\tilde{\mathscr{J}}_{\omega^\h,Z,\gamma^\h}\neq0$
 for any horizontal section $\omega^\h$ of $\nabla^\ast$ for $\Gamma^\h$,
 then the homoclinic orbit $\gamma^\h$ does not persists in \eqref{sys} for $\epsilon>0$.

%\newpage

% **********************************************************
% Section 4
% **********************************************************
\section{Some relationships with the Melnikov Methods}\label{Melnikov}
In this section, we discuss some relationships of the main results in Sections~2 and 3
 with the {standard, subharmonic and homoclinic Melnikov methods \cite{GH83,M63,W90,Y96},
 which provide sufficient conditions for persistence of periodic and homoclinic orbits, respectively, in time-periodic perturbations of single-degree-of-freedom Hamiltonian systems,
 and with another version of the homoclinic Melnikov method due to Wiggins \cite{W88}
 for autonomous perturbations of multi-degree-of-freedom Hamiltonian systems}.

\subsection{{Standard} Melnikov methods% and some known results
}\label{subsec-PH}
We first review the {standard Melnikov methods for subharmonic and homoclinic orbits.}
See \cite{GH83,W90,Y96} for more details.

We consider systems of the form
\begin{align}\label{mel}
\dot{x}=J_2DH(x)+\varepsilon g(x,t),\quad
x\in\Rset^2,
\end{align}
where  $\epsilon$ is a small parameter as in the previous sections,
 $H:\mathbb{R}^2\to\mathbb{R}$ and $g:\Rset^2\times\Rset\to\Rset^2$
 are, respectively, $C^3$ and $C^2$ in $x$,
 $g(x,t)$ is $T$-periodic in $t$ with $T>0$ a constant,
 and $J_2$ is the $2\times 2$ symplectic matrix,
\[
J_2=%\left(\begin{array}{cc} 0&1\\ -1 & 0 \end{array}\right)
\begin{pmatrix}
0 & 1\\
-1 & 0
\end{pmatrix}.
\]
When $\epsilon=0$,
 Eq.~\eqref{mel} becomes a single-degree-of-freedom Hamiltonian system
 with the Hamiltonian $H(x)$,
\begin{align}\label{mel0}
\dot{x}=J_2DH(x).
\end{align}
Let $\theta=t\mod T$ so that $\theta\in\Sset_T^1$, where $\Sset_T^1={\Rset/T\Zset}$.
We rewrite \eqref{mel} as an autonomous system,
\begin{equation}\label{mel-aut}
%\begin{split}
\dot{x}=J_2DH(x)+\varepsilon g(x,\theta),\quad
\dot{\theta}=1.
%\end{split}
\end{equation}
%Roughly speaking, Melnikov method detects the existence of 
%subharmonic orbits and transverse homoclinic orbits of perteubed system \eqref{mel}.
%For details of Melnikov method and its generalization, see \cite{GH83, HM82, W03}.
%In Section~\ref{Melnikov}, we return to the system \eqref{mel-aut} and discuss relations between our main results and following known results.
%\subsubsection{subharmonic Melnikov methods}

We begin with the subharmonic Melnikov method \cite{GH83,W90,Y96},
 and make the following assumption:
\begin{itemize}
\setlength{\leftskip}{-1.5em}
\item[(M)]
The unperturbed system \eqref{mel0}
 possesses a one-parameter family of periodic orbits $q^{\alpha}(t)$
 with period $T^{\alpha}$, $\alpha\in(\alpha_1,\alpha_2)$,  for some $\alpha_1<\alpha_2$.
 %(-1,0)$. such that $\lim_{\alpha\to 0}T^\alpha=\infty$.
\end{itemize}
%When $lT^\alpha=mT$ ($l,m\in \Nset$ are relatevely prime), 
Fix the value of $\alpha\in(\alpha_1,\alpha_2)$ such that
\begin{equation}
lT^\alpha=mT
\label{eqn:res}
\end{equation}
for some relatively prime integers $l,m>0$.
When $\epsilon=0$, Eq.~\eqref{mel-aut}
 has a one-parameter family of $mT$-periodic orbits
 $(x,\theta)=(q^\alpha(t-\tau),t)$, $\tau\in[0,T)$.
Note that $(x,\theta)=(q^\alpha(t-\tau-jT),t)$
 represents the same periodic orbit in the phase space $\Rset^2\times\Sset_T^1$
 for $j=0,1,\ldots,m-1$.
Define the \emph{subharmonic Melnikov function} as
\begin{equation}
M^{m/l}(\tau):=\int_{0}^{mT} DH(q^{\alpha}(t))\cdot g(q^{\alpha}(t),t+\tau)dt,
\label{eqn:subM}
\end{equation}
where the dot `$\cdot$' represents the standard inner product in $\Rset^2$.
We have the following (see \cite{GH83,W90,Y96} for the proof).

\begin{thm}
\label{thm:M1}
If the subharmonic Melnikov function $M^{l/m}(\tau)$ has a simple zero
 at $\tau=\tau_0\in\Sset_T^1$,
 then for $\epsilon>0$ sufficiently small
 Eq.~\eqref{mel-aut} has a periodic orbit of period $mT$
 near the unperturbed periodic orbit $(x,\theta)=(q^\alpha(t-\tau_0),t)$
 satisfying \eqref{eqn:res}.
\end{thm}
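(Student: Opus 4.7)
The plan is to recast the existence of an $mT$-periodic orbit of \eqref{mel-aut} as a fixed-point problem for the $m$-th iterate of a Poincar\'e return map, identify the leading order of the displacement with the subharmonic Melnikov function $M^{m/l}$, and close by two applications of the implicit function theorem (IFT).

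First I would take the Poincar\'e section $\Sigma:=\Rset^2\times\{0\}$ in $\Rset^2\times\Sset_T^1$ and let $P_\epsilon:\Sigma\to\Sigma$ denote the time-$T$ return map of \eqref{mel-aut}; an $mT$-periodic orbit corresponds to a fixed point of $P_\epsilon^m$. Under the resonance condition \eqref{eqn:res}, the entire resonant orbit $\Gamma^\alpha:=\{q^\alpha(-\tau)\mid\tau\in[0,T^\alpha)\}$ consists of fixed points of $P_0^m$, which supplies the unperturbed family to continue from. Introducing action--angle coordinates $(I,\phi)$ on a neighbourhood of the family of periodic orbits supplied by (M) so that the unperturbed flow reads $\dot I=0$, $\dot\phi=\Omega(I)$ with $mT\,\Omega(I^\alpha)=2\pi l$, the $m$-th iterate of the Poincar\'e map takes the form
\begin{align*}
I_1 &= I+\epsilon F(I,\phi;\epsilon),\\
\phi_1 &= \phi + mT\,\Omega(I)+\epsilon G(I,\phi;\epsilon)\pmod{2\pi}.
\end{align*}

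Next, a standard variation-of-parameters argument using $\dot H=\epsilon DH(x)\cdot g(x,t)$ along orbits of \eqref{mel} shows that, up to an affine change of variables relating $\phi$ to the phase $\tau$, $F(I^\alpha,\phi;0)$ equals a nonzero constant multiple of $M^{m/l}(\tau)$; the key observation is that the integrand $DH(q^\alpha(s))\cdot g(q^\alpha(s),s+\tau)$ is $mT$-periodic in $s$ (since $\gcd(l,m)=1$ and $lT^\alpha=mT$), so the integral over any interval of length $mT$ coincides with \eqref{eqn:subM}. The fixed-point system then becomes $F=0$ together with $mT(\Omega(I)-\Omega(I^\alpha))+\epsilon G\equiv 0\pmod{2\pi}$. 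Under the twist condition $\Omega'(I^\alpha)\neq 0$ (equivalently $dT^{\alpha'}/d\alpha'|_{\alpha'=\alpha}\neq 0$), IFT applied to the second equation produces $I=I^\alpha+O(\epsilon)$ smoothly in $(\phi,\epsilon)$; substituting back, dividing by $\epsilon$, and applying a second IFT using the hypotheses $M^{m/l}(\tau_0)=0$ and $(M^{m/l})'(\tau_0)\neq 0$ yields $\tau=\tau_0+O(\epsilon)$ and hence the desired $mT$-periodic orbit.

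The main obstacles will be, first, to justify carefully that the $\epsilon$-coefficient of the $H$-displacement along the orbit starting at $q^\alpha(-\tau)$ is indeed proportional to $M^{m/l}(\tau)$ (requiring the change-of-variable and periodicity checks above, and a careful tracking of the base point as $\tau$ varies), and second, the twist nondegeneracy $\Omega'(I^\alpha)\neq 0$ needed to decouple the $I$- and $\phi$-equations in the first IFT step. The twist condition is typically a standing hypothesis tacitly assumed in (M); without it the first IFT fails and a finer, higher-order Melnikov-type analysis would be required.
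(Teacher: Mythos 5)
Your proposal is correct and follows essentially the same route as the proof the paper relies on: the paper gives no proof of Theorem~\ref{thm:M1} but defers to \cite{GH83,W90,Y96}, and the argument there is exactly your reduction to the $m$-th iterate of the Poincar\'e map in action--angle coordinates, identification of the leading-order energy displacement over $[0,mT]$ with $M^{m/l}(\tau)$ via $\dot{H}=\epsilon DH\cdot g$, and the two implicit-function-theorem steps. Your closing remark about the twist condition $dT^{\alpha}/d\alpha\neq 0$ is on point: it is genuinely needed for the first IFT step and is an explicit hypothesis in the cited references, even though it is not stated in assumption (M) as written here.
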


Theorem~\ref{thm:M1} means that %In other words,
 the periodic orbit $(x,\theta)=(q^\alpha(t-\tau_0),t)$ persists
 in \eqref{mel-aut} for $\epsilon>0$ sufficiently small
 %under the perturbation $\epsilon g(x,\theta)$
 if $M^{m/l}(\tau)$ has a simple zero at $\tau=\tau_0$.
%In \cite{Y}, its bifurcation is also studied. 
The stability of the perturbed periodic orbit can be also determined easily \cite{Y96}.
Moreover, several bifurcations of the periodic orbits
 were discussed in \cite{Y96,Y02,Y03}.

%\subsubsection{homoclinic Melnikov methods}

We next review the homoclinic Melnikov method \cite{GH83,M63,W90}
 and assume the following instead of (M):
\begin{itemize}
\setlength{\leftskip}{-1.2em}
\item[(M')]
The unperturbed system \eqref{mel0} possesses a hyperbolic  saddle point $p$
 connected to itself by a homoclinic orbit $q^\h(t)$.
\end{itemize}
When $\epsilon=0$, 
 Eq.~\eqref{mel-aut} has a hyperbolic $T$-periodic orbit $(x,\theta)=(p,t)$
 with a one-parameter family of homoclinic orbits
 $(x,\theta)=(q^\h(t-\tau),t)$, $\tau\in\Sset_T^1$.
Note that $(x,\theta)=(q^\h(t-\tau-jT),t)$
 represents the same homoclinic orbit in the phase space $\Rset^2\times\Sset_T^1$
 for $j=0,1,\ldots,m-1$.
We easily show that there exists a hyperbolic periodic orbit near $(x,\theta)=(p,t)$
 (see \cite{GH83,W90} for the proof).
Define the \emph{homoclinic Melnikov function} as 
\begin{equation}
M(\tau):=\int_{-\infty}^{\infty} DH(q^{\h}(t))\cdot g(q^{\h}(t),t+\tau)dt
\label{eqn:homM}
\end{equation}
We have the following (see \cite{GH83,M63,W90} for the  proof).

\begin{thm}
\label{thm:M2}
If the homoclinic Melnikov function $M(\tau)$ has a simple zero at $\tau=\tau_0\in\Sset_T^1$,
 then for $\epsilon>0$ sufficiently small
 Eq.~\eqref{mel-aut} has a transverse homoclinic orbit to the hyperbolic periodic orbit
 near $(x,\theta)=(q^\h(t-\tau_0),t)$.
\end{thm}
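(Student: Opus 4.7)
The plan is to adapt the classical Melnikov argument: track the splitting of the perturbed stable and unstable manifolds of the hyperbolic periodic orbit, measure the leading-order gap, and apply the implicit function theorem to obtain a transverse intersection.

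First I would establish persistence of the invariant objects. Since $(x,\theta)=(p,t)$ is a hyperbolic $T$-periodic orbit of the unperturbed autonomous system \eqref{mel-aut}, the implicit function theorem (applied to the Poincar\'e map on the section $\{\theta=0\}$) yields a hyperbolic $T$-periodic orbit $\gamma_\epsilon(t)=(p_\epsilon(t),t)$ with $p_\epsilon=p+O(\epsilon)$ for $\epsilon>0$ small. The local stable/unstable manifold theorem (in its parametrized form) then provides $C^2$ local invariant manifolds $W^{\mathrm{s}}_{\mathrm{loc}}(\gamma_\epsilon)$, $W^{\mathrm{u}}_{\mathrm{loc}}(\gamma_\epsilon)$ that are $O(\epsilon)$-close to $W^{\mathrm{s}}_{\mathrm{loc}}(p)=W^{\mathrm{u}}_{\mathrm{loc}}(p)$; flowing these forward/backward produces global manifolds that, on any compact portion of $q^{\mathrm{h}}$, remain $C^2$ $O(\epsilon)$-close to the unperturbed separatrix.

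Next, I would set up the distance function. Fix $\tau_0\in\Sset_T^1$ and the cross-section $\Sigma^{\tau_0}=\{\theta=\tau_0\}$. On $\Sigma^{\tau_0}$ draw the affine line $L$ through $q^{\mathrm{h}}(-\tau_0)$ normal to $\dot q^{\mathrm{h}}(-\tau_0)=J_2 DH(q^{\mathrm{h}}(-\tau_0))$. For $\epsilon$ small, $W^{\mathrm{s}}(\gamma_\epsilon)$ and $W^{\mathrm{u}}(\gamma_\epsilon)$ each meet $L$ transversely at unique nearby points $q^{\mathrm{s}}_\epsilon$, $q^{\mathrm{u}}_\epsilon$; define
\begin{equation*}
d(\tau_0,\epsilon):=\frac{DH(q^{\mathrm{h}}(-\tau_0))\cdot\bigl(q^{\mathrm{u}}_\epsilon-q^{\mathrm{s}}_\epsilon\bigr)}{\|DH(q^{\mathrm{h}}(-\tau_0))\|}.
\end{equation*}
Writing $q^{\mathrm{u/s}}_\epsilon(t,\tau_0)=q^{\mathrm{h}}(t-\tau_0)+\epsilon q^{1,\mathrm{u/s}}(t,\tau_0)+O(\epsilon^2)$ for the orbits through these points, each first variation satisfies the linear inhomogeneous equation
\begin{equation*}
\dot q^{1,\mathrm{u/s}}=J_2 D^2 H(q^{\mathrm{h}}(t-\tau_0))\,q^{1,\mathrm{u/s}}+g(q^{\mathrm{h}}(t-\tau_0),t).
\end{equation*}
Setting $\Delta(t):=DH(q^{\mathrm{h}}(t-\tau_0))\cdot\bigl(q^{1,\mathrm{u}}-q^{1,\mathrm{s}}\bigr)(t,\tau_0)$, a direct computation (using that $J_2$ is skew, so $DH\cdot J_2 D^2H\,\xi=-(D^2H\,J_2 DH)\cdot\xi=-\dot q^{\mathrm{h}}\cdot D^2H\,\xi$, plus the chain rule) collapses to
\begin{equation*}
\frac{d\Delta}{dt}=DH(q^{\mathrm{h}}(t-\tau_0))\cdot\bigl(g(q^{\mathrm{h}}(t-\tau_0),t)-g(q^{\mathrm{h}}(t-\tau_0),t)\bigr)+\cdots,
\end{equation*}
and after treating the $\pm\infty$ endpoints separately on the stable/unstable branches one arrives at
\begin{equation*}
\Delta(+\infty)-\Delta(-\infty)=\int_{-\infty}^{\infty}DH(q^{\mathrm{h}}(t-\tau_0))\cdot g(q^{\mathrm{h}}(t-\tau_0),t)\,dt=M(\tau_0)
\end{equation*}
after the change of variables $t\mapsto t+\tau_0$. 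Hyperbolicity of $p$ forces $DH(q^{\mathrm{h}}(t))\to 0$ exponentially as $t\to\pm\infty$, while $q^{1,\mathrm{s}}$, $q^{1,\mathrm{u}}$ grow at most exponentially with rates strictly smaller than the decay of $DH(q^{\mathrm{h}})$; this controls the endpoint terms and simultaneously guarantees absolute convergence of $M(\tau_0)$. Hence $d(\tau_0,\epsilon)=\epsilon\,M(\tau_0)/\|DH(q^{\mathrm{h}}(-\tau_0))\|+O(\epsilon^2)$.

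Finally, I would invoke the implicit function theorem: since $M(\tau_0)=0$ and $M'(\tau_0)\neq 0$, the equation $d(\tau,\epsilon)=0$ has a unique $C^1$ solution $\tau=\tau(\epsilon)$ with $\tau(0)=\tau_0$, giving an intersection point $q^{\mathrm{s}}_\epsilon(\tau(\epsilon))=q^{\mathrm{u}}_\epsilon(\tau(\epsilon))\in W^{\mathrm{s}}(\gamma_\epsilon)\cap W^{\mathrm{u}}(\gamma_\epsilon)$; transversality of the intersection follows from $\partial_\tau d(\tau_0,0)\neq 0$, and the orbit through this point is the claimed transverse homoclinic orbit near $(q^{\mathrm{h}}(t-\tau_0),t)$. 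The main obstacle I expect is the bookkeeping in the middle step: justifying that the two improper integrals defining $\Delta(\pm\infty)$ exist separately (not just their difference) and that the remainder term is genuinely $O(\epsilon^2)$ uniformly on a neighborhood of $\tau_0$ — both rely on matching the exponential decay of $DH(q^{\mathrm{h}})$ against the exponential growth of the variational equation, which is where the saddle hyperbolicity in (M') is genuinely used.
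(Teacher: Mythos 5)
The paper does not prove Theorem~\ref{thm:M2} itself---it defers to \cite{GH83,M63,W90}---and your proposal is precisely the standard Melnikov distance-function argument given in those references (persistence of the hyperbolic periodic orbit, first variation of the split manifolds, the skew-symmetry cancellation in $\tfrac{d}{dt}\bigl(DH\cdot q^{1,\mathrm{u}/\mathrm{s}}\bigr)$, separate endpoint analysis on the two branches, and the implicit function theorem), so the approach matches and is essentially correct apart from the garbled intermediate display. One caution on the step you yourself flag as delicate: your justification that the endpoint terms vanish because $q^{1,\mathrm{u}/\mathrm{s}}$ ``grow at rates strictly smaller than the decay of $DH(q^{\mathrm{h}})$'' fails as literally stated for a planar saddle, where a generic solution of the variational equation grows like $e^{\lambda|t|}$, exactly matching the rate $e^{-\lambda|t|}$ at which $DH(q^{\mathrm{h}}(t))$ decays; what actually gives $\lim_{t\to-\infty}DH(q^{\mathrm{h}}(t-\tau_0))\cdot q^{1,\mathrm{u}}(t)=0$ (and its stable counterpart) is that $q^{1,\mathrm{u}}$ is the particular solution that remains \emph{bounded} as $t\to-\infty$, since the unstable manifold is attached to the persistent hyperbolic periodic orbit.
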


Theorem~\ref{thm:M2} means that %In other words,
 the homoclinic orbit $(x,\theta)=(q^\h(t-\tau_0),t)$ persists
 in \eqref{mel-aut} for $\epsilon>0$ sufficiently small
 %under the perturbation $\epsilon g(x,\theta)$
 if $M(\tau)$ has a simple zero at $\tau=\tau_0$.
By the Smale-Birkhoff theorem \cite{GH83,W90},
 the existence of transverse homoclinic orbits to hyperbolic periodic orbits
 implies that chaotic motions occur in \eqref{mel-aut}, i.e., in \eqref{mel}.
 
{We now describe some relationships of our results
 on persistence of first integrals
 with the standard Melnikov methods for \eqref{mel-aut},
 which has the Hamiltonian $H(x)$ is a first integral when $\epsilon=0$.}
We first state the relationship for the subharmonic Melnikov method.

%%%begin:thm:sub-mel%%%%%%%%%%%%%%%%%%%%%%%%%%%%%
\begin{thm}\label{thm:sub-mel}
Suppose that assumption (M) and the resonance condition $lT^\alpha=mT$ hold
 for $l,m>0$ relatively prime integers.
If Eq.~\eqref{mel-aut} has a $C^3$ first integral $F_\varepsilon(x,t)=H(x)+O(\epsilon)$ 
 depending $C^2$-smoothly on $\varepsilon$ in a neighborhood of
\[
\Gamma_{\tau_0}^\alpha=\{(q^\alpha(t-\tau_0),t)\mid t\in[0,mT)\}
\]
with $\tau_0\in\Sset_T^1$, then there exists a connected open set ${\Pi}\subset\Sset_T^1$
 such that $\tau_0\in {\Pi}$
 and the subharmonic Melnikov function $M^{m/l}(\tau)$ is zero on ${\Pi}$.
\end{thm}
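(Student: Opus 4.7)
The plan is to apply Theorem~\ref{thm:main1} to the autonomous system \eqref{mel-aut}, regarded as a dynamical system on the phase space $\Rset^2\times\Sset_T^1$. Its vector field decomposes as $X^0(x,\theta)=(J_2DH(x),1)$ and $X^1(x,\theta)=(g(x,\theta),0)$, while $F(x,\theta):=H(x)$ is a first integral of $X^0$. Under the resonance condition $lT^\alpha=mT$, for each $\tau\in\Sset_T^1$ the curve $\gamma_\tau(t):=(q^\alpha(t-\tau),t)$, $t\in[0,mT)$, is an $mT$-periodic orbit of $X^0$, whose image $\Gamma_\tau^\alpha$ varies continuously in $\tau$ in the Hausdorff sense.

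By hypothesis, $F_\varepsilon$ is defined on some open neighborhood $\N_0$ of $\Gamma_{\tau_0}^\alpha$. Compactness of $\Gamma_{\tau_0}^\alpha$ together with continuity of $\tau\mapsto\Gamma_\tau^\alpha$ then produces an open neighborhood $\Pi\subset\Sset_T^1$ of $\tau_0$ such that $\Gamma_\tau^\alpha\subset\N_0$ for every $\tau\in\Pi$. For each such $\tau$, the restriction of $F_\varepsilon$ to a neighborhood of $\Gamma_\tau^\alpha$ is a $C^3$ first integral of $X_\varepsilon$ depending $C^2$-smoothly on $\varepsilon$ with $F_0=H$, so Theorem~\ref{thm:main1} applied to the orbit $\gamma_\tau$ forces $\mathscr{I}_{H,\gamma_\tau}=0$.

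It then remains to identify this integral with the subharmonic Melnikov function. A direct computation gives
\begin{align*}
\mathscr{I}_{H,\gamma_\tau}
&=\int_0^{mT}DH(q^\alpha(t-\tau))\cdot g(q^\alpha(t-\tau),t)\,dt\\
&=\int_{-\tau}^{mT-\tau}DH(q^\alpha(s))\cdot g(q^\alpha(s),s+\tau)\,ds
\end{align*}
after the change of variables $s=t-\tau$. Since $q^\alpha$ has period $T^\alpha=mT/l$, $g(\,\cdot\,,\theta)$ is $T$-periodic in $\theta$, and $\gcd(l,m)=1$, the integrand is $mT$-periodic in $s$; shifting the interval of integration back to $[0,mT]$ then yields $\mathscr{I}_{H,\gamma_\tau}=M^{m/l}(\tau)$, so that $M^{m/l}$ vanishes identically on $\Pi$.

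The main obstacle I anticipate is the uniform-in-$t$ control ensuring $\Gamma_\tau^\alpha\subset\N_0$ for all $\tau$ in an open neighborhood of $\tau_0$; this is exactly what upgrades the pointwise vanishing furnished by Theorem~\ref{thm:main1} to vanishing of $M^{m/l}$ on an open connected subset of $\Sset_T^1$, and is the content of the conclusion beyond a single-point statement.
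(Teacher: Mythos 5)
Your proposal is correct and follows essentially the same route as the paper: apply Theorem~\ref{thm:main1} to the unperturbed $mT$-periodic orbits $\gamma_\tau$ for all $\tau$ in a connected open set $\Pi$ whose orbits stay inside the neighborhood where $F_\varepsilon$ is defined, and identify $\mathscr{I}_{H,\gamma_\tau}$ with $M^{m/l}(\tau)$ by the change of variables $s=t-\tau$ and periodicity. The extra care you take in justifying the existence of $\Pi$ via compactness is a detail the paper leaves implicit, but the argument is the same.
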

%%%end:thm:sub-mel%%%%%%%%%%%%%%%%%%%%%%%%%%%%%

\begin{proof}
Assume that the hypotheses of the theorem hold
 and $F_\epsilon$ is a first integral of \eqref{mel-aut}.
Then $\hat{\gamma}^{m/l}_\tau(t)=(q^\alpha(t-\tau), t)$
 is an $mT$-periodic orbit in \eqref{mel-aut} with $\epsilon=0$ for any $\tau\in [0, T)$.
Letting $F=H$, we write the integral \eqref{integral1} as
\begin{align*}
\mathscr{I}_{H,\hat{\gamma}^{m/l}_\tau}
=&\int_0^{mT}DH(q^\alpha(t-\tau))\cdot g(q^\alpha(t-\tau),t)dt\\
=&\int_0^{mT}DH(q^\alpha(t))\cdot g(q^\alpha(t),t+\tau)dt,
\end{align*}
which coincides with $M^{m/l}(\tau)$.
We choose a connected open set ${\Pi}\subset\Sset_T^1$
 such that the neighborhood of  $\Gamma_{\tau_0}^\alpha$
 contains $\bigcup_{\tau\in {\Pi}}\Gamma_\tau^\alpha$.
Applying Theorem~\ref{thm:main1} to the unperturbed periodic orbit
 $\hat{\gamma}^{m/l}_\tau$ for $\tau\in {\Pi}$,
 we obtain the desired result.
\end{proof}

Theorem~\ref{thm:sub-mel} means that
 if there exists a connected open set ${\Pi\subset\Sset_T^1}$
 such that $M^{m/l}(\tau)\not\equiv 0$ on ${\Pi}$,
 then the first integral $H$ does not persist near $\bigcup_{\tau\in\Pi}\Gamma_\tau^\alpha$
 in \eqref{mel-aut} for $\epsilon>0$.

\begin{rmk}
\label{rmk:4.3a}
Under the hypotheses of Theorem~\ref{thm:sub-mel} the following hold:
\begin{enumerate}
\setlength{\leftskip}{-1.5em}
\item[(i)]
It follows from Theorem~\ref{thm:main2} that
 if the periodic orbit $(x,\theta)=(q^\alpha(t-\tau),t)$ persists in \eqref{mel-aut},
 then $M^{m/l}(\tau)=0$;
\item[(ii)]
If Eq.~\eqref{mel-aut} has such a first integral near $\bigcup_{\tau\in\Sset_T^1}\Gamma_\tau^\alpha$,
 then $M^{m/l}(\tau)$ is identically zero on $\Sset_T^1$;
\item[(iii)]
If $H,g$ are analytic and Eq.~\eqref{mel-aut}
 has such a first integral near $\Gamma_{\tau_0}^\alpha$ with {some} $\tau_0\in{\Sset_T^1}$,
 then $M^{m/l}(\tau)$ is identically zero on $\Sset_T^1$.
\end{enumerate}
The statement of part~(i) consists with Theorem~\ref{thm:M1}.
Part~(iii) follows from the identity theorem (e.g., Theorem~3.2.6 of \cite{AF03})
 since $M^{m/l}(\tau)$ is also analytic.
\end{rmk}

Similarly, we have the following result for the homoclinic Melnikov method.

%%%begin:thm:hom-mel%%%%%%%%%%%%%%%%%%%%%%%%%%%%%
\begin{thm}\label{thm:hom-mel}
Suppose that assumption (M') holds.
If Eq.~\eqref{mel-aut} has a $C^3$ first integral $F_\epsilon{(x,t)}=H(x)+O(\epsilon)$ 
 depending $C^2$-smoothly on $\varepsilon$ in a neighborhood of
\[
\Gamma_{\tau_0}^\h=\{(q^\h(t-\tau_0),t)\mid t\in\Rset\}
\]
with $\tau_0\in\Sset_T^1$, then there exists a connected open set ${\Pi}\subset\Sset_T^1$
 such that $\tau_0\in {\Pi}$
 and the homoclinic Melnikov function $M(\tau)$ is zero on ${\Pi}$.
\end{thm}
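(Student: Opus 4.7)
The plan is to reduce Theorem~\ref{thm:hom-mel} to Theorem~\ref{thm:main4} applied to the autonomous reformulation~\eqref{mel-aut}, proceeding in complete analogy with the proof of Theorem~\ref{thm:sub-mel}. I view \eqref{mel-aut} as a dynamical system on $\M=\Rset^2\times\Sset_T^1$ with unperturbed vector field $X^0(x,\theta)=(J_2DH(x),1)$ and perturbation $X^1(x,\theta)=(g(x,\theta),0)$. Under (M'), the set $\hat{\gamma}^{\p}(t)=(p,t)$ is a hyperbolic $T$-periodic orbit of $X^0$ in $\M$, and $\hat{\gamma}^{\h}_\tau(t)=(q^{\h}(t-\tau),t)$ is a homoclinic orbit to $\hat{\gamma}^{\p}$ for every $\tau\in\Sset_T^1$, so (A1') holds. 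The Hamiltonian $H$, viewed as a function on $\M$, is a nontrivial $C^3$ first integral of $X^0$ since $dH(X^0)=DH(x)\cdot J_2DH(x)=0$, which verifies (A2').

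Next, I compute $\tilde{\mathscr{I}}_{H,\hat{\gamma}^{\h}_\tau}$ from \eqref{Integ-hom-per} using the Poincar\'e section $\Sigma=\{\theta=0\}$, which is transverse to $\hat{\gamma}^{\p}$. The curve $\hat{\gamma}^{\h}_\tau$ meets $\Sigma$ at the times $T_j=jT$, $j\in\Zset$, and since $dH(X^1)(x,\theta)=DH(x)\cdot g(x,\theta)$, the change of variable $s=t-\tau$ gives
\begin{align*}
\tilde{\mathscr{I}}_{H,\hat{\gamma}^{\h}_\tau}
&=\lim_{k\to\infty}\int_{-kT}^{kT}DH(q^{\h}(t-\tau))\cdot g(q^{\h}(t-\tau),t)\,dt\\
&=\int_{-\infty}^{\infty}DH(q^{\h}(s))\cdot g(q^{\h}(s),s+\tau)\,ds=M(\tau),
\end{align*}
convergence being ensured by the exponential decay of $DH(q^{\h}(s))$ as $s\to\pm\infty$ forced by the hyperbolicity of $p$.

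I then pick a connected open $\Pi\subset\Sset_T^1$ containing $\tau_0$ such that $\bigcup_{\tau\in\Pi}\Gamma_\tau^{\h}$ lies in the neighborhood of $\Gamma_{\tau_0}^{\h}$ on which $F_\epsilon$ is defined. Applying Theorem~\ref{thm:main4} to each $\tau\in\Pi$ with $F=H$, $\gamma^{\h}=\hat{\gamma}^{\h}_\tau$, and $\gamma^{\p}=\hat{\gamma}^{\p}$ then forces $M(\tau)=\tilde{\mathscr{I}}_{H,\hat{\gamma}^{\h}_\tau}=0$ on $\Pi$, which is the desired conclusion.

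The main obstacle is the topological step of producing $\Pi$: I must verify that the given neighborhood of $\Gamma_{\tau_0}^{\h}$ really contains a uniform tube around the shifted homoclinic orbits $\Gamma_\tau^{\h}$ for $\tau$ close to $\tau_0$, together with the periodic orbit they accumulate on. I would handle this by splitting $\Gamma_\tau^{\h}$ into a compact piece $|t-\tau|\le R$, on which continuous dependence on $\tau$ delivers the inclusion directly, and a tail $|t-\tau|>R$, on which both $\Gamma_\tau^{\h}$ and $\Gamma_{\tau_0}^{\h}$ lie in an arbitrarily small neighborhood of $\hat{\gamma}^{\p}$; this latter neighborhood is contained in the given one because any neighborhood of $\Gamma_{\tau_0}^{\h}$ necessarily contains a neighborhood of its accumulation set $\hat{\gamma}^{\p}$.
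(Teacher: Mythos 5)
Your proposal is correct and follows essentially the same route as the paper's proof: reduce to Theorem~\ref{thm:main4} on $\M=\Rset^2\times\Sset_T^1$, take the Poincar\'e section $\Sigma=\{\theta=0\}$ with $T_j=jT$, identify $\tilde{\mathscr{I}}_{H,\hat{\gamma}^\h_\tau}$ with $M(\tau)$, and sweep $\tau$ over a connected open set $\Pi$ whose orbits stay in the given neighborhood. You are in fact somewhat more careful than the paper about verifying (A1')--(A2') and about the construction of $\Pi$ (the paper simply asserts the existence of such a $\Pi$), though your final claim that any neighborhood of $\Gamma_{\tau_0}^\h$ contains a neighborhood of the limiting periodic orbit is an abuse that is only safe if ``neighborhood'' is read as a uniform tube around the closure, which is the reading the paper implicitly adopts as well.
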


\begin{proof}
Assume that (M') holds.
Then  in \eqref{mel-aut} with $\epsilon=0$,
 $(p, {t})$ represents a periodic orbit,
 to which $\hat{\gamma}_\tau^\h(t)=(q^\h({t-\tau), t})$ is a homoclinic orbit, for any $\tau\in[0, T)$.
We take {the Poincar\'e section} $\Sigma=\{(x,\theta)\in\Rset^2\times\Sset_T^1\mid\theta=0\}$
 and set $T_j= jT$, $j\in\Zset$.
Letting $F=H$, we write the integral in \eqref{Integ-hom-per} as
\[
\int_{-jT}^{jT} DH(q^\h(t-\tau))\cdot g(q^\h(t-\tau), t)dt
=\int_{-jT}^{jT} DH(q^\h(t))\cdot g(q^\h(t), t+\tau)dt,
\]
which converges to $M(\tau)$ {as $j\to\infty$}.
We choose a connected open set ${\Pi}\subset\Sset_T^1$
 such that the neighborhood of  $\Gamma_{\tau_0}^\h$
 contains $\bigcup_{\tau\in {\Pi}}\Gamma_\tau^\h$.
Applying Theorem~\ref{thm:main4} to the unperturbed homoclinic orbit
 $\hat{\gamma}_\tau^\h$ for $\tau\in{\Pi}$,
 we obtain the desired result.
\end{proof}
%%%end:thm:hom-mel%%%%%%%%%%%%%%%%%%%%%%%%%%%%%

Theorem~\ref{thm:hom-mel} means that
 if there exists a connected open set ${\Pi}\in\Sset_T^1$
 such that $M(\tau)\not\equiv 0$ on ${\Pi}$,
 then the first integral $H$ does not persist near $\bigcup_{\tau\in {\Pi}}\Gamma_\tau^\h$
 in \eqref{mel-aut} for $\epsilon>0$.

\begin{rmk}
\label{rmk:4.3b}
Under the hypotheses of Theorem~\ref{thm:hom-mel} the following hold,
 as in Remark~\ref{rmk:4.3a}:
\begin{enumerate}
\setlength{\leftskip}{-1.5em}
\item[(i)]
It follows from Theorem~\ref{thm:main5} that
 if the homoclinic orbit $(x,\theta)=(q^\h(t-\tau),t)$ persists in \eqref{mel-aut},
 then $M(\tau)=0$;
\item[(ii)]
If Eq.~\eqref{mel-aut} has such a first integral near $\bigcup_{\tau\in\Sset_T^1}\Gamma_\tau^\h$,
 then $M(\tau)$ is identically zero on $\Sset_T^1$;
\item[(iii)]
If $H,g$ are analytic and Eq.~\eqref{mel-aut}
 has such a first integral near $\Gamma_{\tau_0}^\h$ with {some} $\tau_0\in\Sset_T$,
 then $M(\tau)$ is identically zero on $\Sset_T^1$.
\end{enumerate}
The statement of part~(i) consists with Theorem~\ref{thm:M2}.
\end{rmk}

{
\subsection{Another version of the homoclinic Melnikov method}

We next consider $(m+1)$-degree-of-freedom Hamiltonian systems of the form
\begin{equation}
\begin{split}
&
\dot{x}=J_{2m}D_xH^0(x,I)+\epsilon J_{2m}D_xH^1(x,I,\theta),\\
&
\dot{I}=-\epsilon D_\theta H^1(x,I,\theta),\\
&
\dot{\theta}=D_IH^0(x,I)+\epsilon D_IH^1(x,I,\theta),
\end{split}\quad
(x,I,\theta)\in\Rset^{2m}\times V\times\Sset_{2\pi}^1,
\label{g-mel}
\end{equation}
for which $H_\epsilon(x,I,\theta)=H^0(x,I)+\epsilon H^1(x,I,\theta)$ is the Hamiltonian,
 where $m\ge 1$ is an integer, $V\subset\Rset$ is an open interval,
 $H^0(x,I),H^1(x,I,\theta)$ are $C^3$ in $(x,I,\theta)$,
 and $J_{2m}$ is the $2m\times 2m$ symplectic matrix given by
\[
J_{2m}=%\left(\begin{array}{cc} 0&1\\ -1 & 0 \end{array}\right)
\begin{pmatrix}
0 & \id_m\\
-\id_m & 0
\end{pmatrix},
\]
where $\id_m$ is the $m\times m$ identity matrix.
When $\epsilon=0$, Eq.~\eqref{g-mel} becomes
\begin{align}\label{g-mel0}
\dot{x}=J_{2m}D_xH^0(x,I),\quad
\dot{I}=0,\quad
\dot{\theta}=D_IH^0(x,I).
\end{align}
{Note that $I$ and $\theta$ are scalar variables.}
We assume the following on the unperturbed system \eqref{g-mel0}:

\begin{enumerate}
\setlength{\leftskip}{-1em}
\item[(W1)]
For each $I\in V$, the first equation is % Liouville integrable \cite{A89,M99}, i.e., it
 has $m$ $C^3$ first integrals $F_j(x,I)$, $j=1,\ldots,m$, with $F_1(x,I)=H^0(x,I)$
 such that $D_xF_j(x,I)$, $j=1,\ldots,m$, are linearly independent except at equilibria
 {and they are in involution, i.e.,
 $\{F_i(x,I), F_j(x,I)\}:=D_xF_i(x,I)\cdot J_{2m}D_xF_j(x,I)=0, \ i,j=1, \ldots,m$.} 
\item[(W2)]
For each $I\in V$
 the first equation has a hyperbolic equilibrium $x^I$
 and an $(m-1)$-parameter family of homoclinic orbits $q^I(t;\alpha)$,
 $\alpha\in \bar{V}\subset\Rset^{m-1}$,
 to $x^I$,
 where $x^I$ and $q^I(t;\alpha)$ depend $C^2$-smoothly on $I$ and $\alpha$,
 and $\bar{V}$ is an connected open in $\Rset^{m-1}$.
\item[(W3)]
$D_I H^0(q^I(t;\alpha),I)>0$ for $(I,\alpha)\in V\times \bar{V}$.
\end{enumerate}
Obviously, $I$ is a first integral of \eqref{g-mel0} as well as $F_j(x,I)$, $j=1,\ldots,m$,
 so that the Hamiltonian system \eqref{g-mel} is Liouville integrable \cite{A89,M99}.
Thus, Eq.~\eqref{g-mel} is a special case in a class of systems called ``System III'' in Chapter~4 of \cite{W88},
 in which very wide classes of systems containing more general Hamiltonian systems{,
 especially having multiple action and angular variables
 such as the scalar variables $I$ and $\theta$ in \eqref{g-mel},} were discussed.

In \eqref{g-mel0} $\N_0=\{(x^I,I,\theta)\mid I\in V,\theta\in\Sset_{2\pi}^1\}$
 is a {two-dimensional} normally hyperbolic invariant manifold {with boundary}
 whose stable and unstable manifolds coincide along the homoclinic manifold
\[
\bar{\Gamma}^\h
=\{(q^I(t;\alpha),I,\theta)\mid I\in V,\alpha\in \bar{V},\theta\in\Sset_{2\pi}^1\}.
\]
Here ``normal hyperbolicity'' means that
 the expansive and contraction rates of the flow generated by \eqref{g-mel0}
 normal to $\N_0$ dominate those tangent to $\N_0$.
{Note that $(x,I,\theta)=(x^{I_0},I_0,D_IH^0(x^{I_0},I_0)t+\theta_0)$ represents a periodic orbit on $\N_0$
 for $(I_0,\theta_0)\in V\times\Sset_{2\pi}^1$.}
{Using} the invariant manifold theory \cite{W94}{, we show} that when $\epsilon\neq 0$
 Eq.~\eqref{g-mel} also has a {two-dimensional} normally hyperbolic invariant manifold $\N_\epsilon$
  near $\N_0$
 and its stable and unstable manifolds are close to those of $\N_0$.
Moreover, the invariant manifold $\N_\epsilon$
 consists of periodic orbits $\gamma_{I,\epsilon}^\p${,
 which are given as intersections between $\N_\epsilon$
 and the level sets $H_\epsilon(x,I,\theta)=\mathrm{const.}$ since $\D_IH^0(x^I,I)>0$ by (W3),}
 near $\gamma_I^\p=\{(x^I,I,\theta)|\theta\in\Sset_{2\pi}^1\}$ for $I\in V$.
{Note that $\N_\epsilon$ can be invariant by taking two periodic orbits as its boundary
 as in Proposition~2.1 of \cite{Y00}.}

Let $\theta=\theta^I(t;\alpha)$ denote the solution to
\[
\dot{\theta}=D_IH^0(q^I(t;\alpha),I)
\]
with $\theta(0)=0$, i.e.,
\[
\theta^I(t;\alpha)=\int_0^t D_IH^0(q^I(t;\alpha),I)dt.
\]
Then $\gamma_{I,\alpha,\theta_0}^\h(t)=(q^I(t;\alpha),I,\theta^I(t;\alpha)+\theta_0)$
  is a homoclinic orbit to the periodic orbit $\gamma_I^\p$ in \eqref{g-mel0}
  for any $\theta_0\in\Sset_{2\pi}^1$.
Let $\{T_j^{I,\alpha}\}_{j=-\infty}^\infty$ be a sequence for $(I,\alpha)\in V\times\bar{V}$
 such that
\begin{equation}
\theta^I(T_j^{I,\alpha};\alpha)=0,\quad
j\in\Zset,\quad\mbox{and}\quad
\lim_{j\to\pm\infty}T_j^{I,\alpha}=\pm\infty.
\label{eqn:Tj}
\end{equation}
By assumption~(W3) there exists such an sequence {$\{T_j^{I,\alpha}\}_{j=-\infty}^\infty$}.
%{Henceforth we fix such $\{T_j^{I,\alpha}\}_{j=-\infty}^\infty$ and d}
Define the \emph{Melnikov functions} for \eqref{g-mel} as
\begin{equation}
\bar{M}_1^I(\theta_0,\alpha)
=\lim_{j\to\infty}\int_{T_{-j}^{I,\alpha}}^{T_j^{I,\alpha}}
 D_\theta H^1(q^I(t;\alpha),I,\theta^I(t;\alpha)+\theta_0)dt
\label{eqn:barM1}
\end{equation}
and
\begin{align}
&
\bar{M}_k^I(\theta_0,\alpha)\notag\\
&
=\lim_{j\to\infty}\int_{T_{-j}^{I,\alpha}}^{T_j^{I,\alpha}}\bigl(D_x F_k(q^I(t;\alpha),I)\cdot
  J_{2m}D_xH^1(q^I(t;\alpha),I,\theta^I(t;\alpha)+\theta_0)\notag\\
&\qquad\qquad
 -D_I F_k(q^I(t;\alpha),I)D_\theta H^1(q^I(t;\alpha),I,\theta^I(t;\alpha)+\theta_0)\bigr)dt
\label{eqn:barMk}
\end{align}
for $k=2,\ldots,m$.
Note that the definitions of the Melnikov functions $\bar{M}_k^I$, $k\ge 2$,
 are different from the original ones of \cite{W88}.
We call $\bar{M}^I=(\bar{M}_1^I,\ldots,\bar{M}_m^I)$ the \emph{Melnikov vector}.
From Theorem~4.1.19 of \cite{W88}
 we obtain the following result for \eqref{g-mel}.

\begin{thm}\label{thm:W}
Suppose that assumptions (W1)-(W3) hold.
If for some $I\in V$
\begin{enumerate}
\setlength{\leftskip}{-1.8em}
\item[(i)]
$\bar{M}^I(\theta,\alpha)=0$;
\item[(ii)]
$\det D\bar{M}^I(\theta,\alpha)\neq 0$
\end{enumerate}
at $(\theta,\alpha)=(\theta_0,\alpha_0)$,
 then the $(m+1)$-dimensional stable and unstable manifolds
 $W^\s(\gamma_{I,\epsilon}^\p)$ and $W^\u(\gamma_{I,\epsilon}^\p)$
 intersect transversely near $(x,I,\theta)=(q^I(0;\alpha_0),I,\theta_0)$
 on the level set of $H_\epsilon(\gamma_{I,\epsilon}^\p)$.
\end{thm}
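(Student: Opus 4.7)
The plan is to deduce Theorem~\ref{thm:W} from Theorem~4.1.19 of \cite{W88}, which treats the more general ``System III'' class containing \eqref{g-mel}. The substantive work is to check that assumptions (W1)-(W3) supply all the structural ingredients required by Wiggins' result and to match our Melnikov vector $\bar{M}^I$ with the splitting distance computed in \cite{W88}, since our $\bar{M}_k^I$ for $k\ge 2$ are defined somewhat differently.

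First I would verify the hypotheses of Wiggins' theorem: the involutive family $F_1,\ldots,F_m$ from (W1) furnishes the partial integrability needed to coordinatize the $m-1$ normal directions to $\N_0$ along which splitting is measured; (W2) gives the coinciding stable and unstable manifolds along $\bar{\Gamma}^\h$; and (W3) ensures $\dot{\theta}>0$ along every unperturbed homoclinic orbit, so that the sequence $\{T_j^{I,\alpha}\}$ defined by $\theta^I(T_j^{I,\alpha};\alpha)=0$ in \eqref{eqn:Tj} exists and diverges to $\pm\infty$. Persistence of the normally hyperbolic manifold $\N_\epsilon$ together with its stable and unstable manifolds is then standard.

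Next I would restrict attention to the level set $\{H_\epsilon = H^0(x^I,I)\}$, which by (W3) is a regular codimension-one submanifold containing $\gamma_{I,\epsilon}^\p$ together with $W^\s(\gamma_{I,\epsilon}^\p)$ and $W^\u(\gamma_{I,\epsilon}^\p)$. On this level set the splitting between the stable and unstable manifolds has $m$ natural components: the $\theta$-component, controlled at leading order by $\dot{I}=-\epsilon D_\theta H^1$, and the $m-1$ components measured by the first integrals $F_2,\ldots,F_m$. Differentiating $F_k$ along the perturbed flow restricted to an unperturbed homoclinic orbit, integrating from $T_{-j}^{I,\alpha}$ to $T_j^{I,\alpha}$, and letting $j\to\infty$ yields the leading-order splitting components $\epsilon\bar{M}_k^I(\theta_0,\alpha_0)+O(\epsilon^2)$; the mixed term $-D_IF_k\,D_\theta H^1$ in \eqref{eqn:barMk} arises precisely because each $F_k$ is a first integral of the $x$-subsystem only, so its time derivative along \eqref{g-mel} picks up an extra contribution through $\dot{I}$. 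Hypothesis~(ii) then makes $(\theta_0,\alpha_0)$ a simple zero of $\bar{M}^I$, and the implicit function theorem delivers the transverse intersection of $W^\s(\gamma_{I,\epsilon}^\p)$ and $W^\u(\gamma_{I,\epsilon}^\p)$ near $(q^I(0;\alpha_0),I,\theta_0)$ for $\epsilon>0$ sufficiently small.

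The main obstacle is the conditional convergence of the improper integrals \eqref{eqn:barM1} and \eqref{eqn:barMk}: although $q^I(t;\alpha)\to x^I$ exponentially, the angular phase $\theta^I(t;\alpha)$ runs off to $\pm\infty$ as $t\to\pm\infty$, so factors such as $D_IF_k(q^I(t;\alpha),I)\,D_\theta H^1(q^I(t;\alpha),I,\theta^I(t;\alpha)+\theta_0)$ oscillate without decaying and the integrands are not absolutely integrable on $\Rset$. Only the special choice of endpoints at the zeros of $\theta^I$ in \eqref{eqn:Tj} extracts a well-defined limit. Verifying this convergence, together with the $C^1$-dependence of $\bar{M}^I$ on $(\theta_0,\alpha_0)$ needed for the implicit function theorem, is the technically delicate point in reducing our statement to that of \cite{W88}.
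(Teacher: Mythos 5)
Your overall strategy---reducing the statement to Theorem~4.1.19 of \cite{W88} after checking that (W1)--(W3) place \eqref{g-mel} in the ``System III'' class---is the same as the paper's, and your remarks on the conditional convergence of \eqref{eqn:barM1} and \eqref{eqn:barMk} address a point the paper also flags. The gap is in the step where you identify the splitting components on the level set with $\epsilon\bar{M}_k^I+O(\epsilon^2)$ by integrating $\frac{d}{dt}F_k$ along the unperturbed homoclinic orbit. That naive integration does reproduce the integrand of \eqref{eqn:barMk} (this is exactly how the paper connects $\bar{M}_k^I$ to Theorem~\ref{thm:main4} in Theorem~\ref{thm:hom-mel2}), but it is not the splitting distance: the difference $F_k(x_\epsilon^{\u}(0))-F_k(x_\epsilon^{\s}(0))$ also picks up boundary contributions as $t\to\pm\infty$ from the $O(\epsilon)$ drift in $I$ of the orbits as they approach the perturbed periodic orbit, and these contribute an extra term $D_IF_k(x^I,I)\bar{M}_1^I$. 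Wiggins' Melnikov vector for this system is therefore $\tilde{M}^I$ with $\tilde{M}_1^I=\bar{M}_1^I$ and $\tilde{M}_k^I=\bar{M}_k^I+D_IF_k(x^I,I)\bar{M}_1^I$ for $k\ge2$, not $\bar{M}^I$. As written, your argument either misidentifies the quantity whose simple zeros Theorem~4.1.19 requires, or it silently re-proves Wiggins' theorem with an unjustified distance measurement; either way the hypotheses of the cited theorem are never verified.

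The missing idea---which is essentially the entire content of the paper's proof---is the purely algebraic observation that hypotheses (i) and (ii) for $\bar{M}^I$ are equivalent to the corresponding hypotheses for $\tilde{M}^I$: at a zero of $\bar{M}^I$ the correction terms vanish because they are multiples of $\bar{M}_1^I$, and since $D_IF_k(x^I,I)$ does not depend on $(\theta,\alpha)$, the Jacobian $D\tilde{M}^I$ differs from $D\bar{M}^I$ only by adding scalar multiples of the first row to the later rows, whence $\det D\tilde{M}^I(\theta_0,\alpha_0)=\det D\bar{M}^I(\theta_0,\alpha_0)$. With that observation the reduction to Theorem~4.1.19 of \cite{W88} is immediate, and none of the splitting computation you sketch is needed.
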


\begin{proof}
Assume that the hypotheses of Theorem~\ref{thm:W} hold.
Let $\tilde{M}_1^I(\theta,\alpha)=\bar{M}_1^I(\theta,\alpha)$ and
\[
\tilde{M}_k^I(\theta,\alpha)
 =\bar{M}_k^I(\theta,\alpha)+D_IF_k(x_I,I)\bar{M}_1^I(\theta,\alpha),\quad
 k=2,\ldots,m,
\]
and let $\tilde{M}^I=(\tilde{M}_1^I,\ldots,\tilde{M}_m^I)$,
 which is the original Melnikov vector defined in \cite{W88} for \eqref{g-mel}.
Note that in \cite{W88},
 although a time sequence does not appear in its formulas (4.1.84) and (4.1.85) or (4.1.101) and (4.1.102),
 such conditional convergences of the integrals as \eqref{eqn:barM1} and \eqref{eqn:barMk}
 are implicitly assumed (see his arguments on system III in part iii) of Section~4.1d of \cite{W88}).
We see that if $\bar{M}^I(\theta,\alpha)$ satisfies conditions~(i) and (ii)
 at $(\theta,\alpha)=(\theta_0,\alpha_0)$,
 then $\tilde{M}^I(\theta_0,\alpha_0)=0$ and $\det D\tilde{M}^I(\theta_0,\alpha_0)\neq 0$,
since $\det D\tilde{M}^I(\theta_0,\alpha_0)=\det D\bar{M}^I(\theta_0,\alpha_0)$,
 which follows from
\begin{align*}
D\tilde{M}^I(\theta_0,\alpha_0)
=&
\begin{pmatrix}
D_\theta\bar{M}_1^I(\theta_0,\alpha_0),D_\alpha\bar{M}_1^I(\theta_0,\alpha_0)\\
D_\theta\bar{M}_2^I(\theta_0,\alpha_0),D_\alpha\bar{M}_2^I(\theta_0,\alpha_0)\\
\vdots\\
D_\theta\bar{M}_m^I(\theta_0,\alpha_0),D_\alpha\bar{M}_m^I(\theta_0,\alpha_0)
\end{pmatrix}\\
& +
\begin{pmatrix}
0\\
D_IF_2(x_I,I)(D_\theta\bar{M}_1^I(\theta_0,\alpha_0),
 D_\alpha\bar{M}_1^I(\theta_0,\alpha_0))\\
\vdots\\
D_IF_m(x_I,I)(D_\theta\bar{M}_1^I(\theta_0,\alpha_0),
 D_\alpha\bar{M}_1^I(\theta_0,\alpha_0))
\end{pmatrix}.
\end{align*}
We obtain the desired result from Theorem~4.1.19 of \cite{W88}.
\end{proof}

\begin{rmk}\label{rmk:4.2}
The Melnikov vector $\bar{M}^I(\theta,\alpha)$
 does not depend on the choice of time sequence $\{T_j^{I,\alpha}\}_{j=-\infty}^\infty$.
Actually, letting $\{\hat{T}_j^{I,\alpha}\}_{j=-\infty}^\infty$ be a different time sequence satisfying
\[
\theta^I(\hat{T}_j^{I,\alpha};\alpha)=\hat{\theta}_0,\quad
j\in\Zset,\quad\mbox{and}\quad
\lim_{j\to\pm\infty}\hat{T}_j^{I,\alpha}=\pm\infty
\]
instead of \eqref{eqn:Tj}, we have
\begin{align*}
\hat{M}_1^I(\theta,\alpha)
:=&\lim_{j\to\infty}\int_{\hat{T}_{-j}^{I,\alpha}}^{\hat{T}_j^{I,\alpha}}
 D_\theta H^1(q^I(t;\alpha),I,\theta^I(t;\alpha)+\theta)dt\\
=&\bar{M}_1^I(\theta,\alpha)
 +\lim_{j\to\infty}\biggl(\int_{\hat{T}_{-j}^{I,\alpha}}^{T_{-j}^{I,\alpha}}
 +\int_{T_{j}^{I,\alpha}}^{\hat{T}_{j}^{I,\alpha}}\biggr)
 D_\theta H^1(q^I(t;\alpha),I,\theta^I(t;\alpha)+\theta)dt\\
=&\bar{M}_1^I(\theta,\alpha)
\end{align*}
since
\begin{align*}
&
\lim_{t\to\pm\infty}D_\theta H^1(q^I(t;\alpha),I,\theta)
=D_\theta H^1(x^I,I,\theta),\\
&
\lim_{t\to\pm\infty}D_I H^0(q^I(t;\alpha),I)
=D_I H^0(x^I,I)
\end{align*}
and
\begin{align*}
&
\biggl(\int_{\hat{T}_{-j}^{I,\alpha}}^{T_{-j}^{I,\alpha}}
 +\int_{T_{j}^{I,\alpha}}^{\hat{T}_{j}^{I,\alpha}}\biggr) D_\theta H^1(x^I,I,\theta^I(t;\alpha)+\theta)
 %\frac{
 D_IH^0(q^I(t;\alpha),I)
 %}{D_IH^0(x^I,I)}
 dt\\
&=\biggl(\int_{\hat{T}_{-j}^{I,\alpha}}^{T_{-j}^{I,\alpha}}
 +\int_{T_{j}^{I,\alpha}}^{\hat{T}_{j}^{I,\alpha}}\biggr)
 %\frac{1}{D_IH^0(x^I,I)}
 \frac{d}{dt}
H^1(x^I,I,\theta^I(t;\alpha)+\theta)\, dt%\\
% &=\frac{1}{D_IH^0(x^I,I)}\biggl(H^1(x^I,I,\theta)
% -H^1(x^I,I,\hat{\theta}_0+\theta)\\
% &\qquad\qquad\qquad\qquad 
% +H^1(x^I,I,\theta^I(\hat{\theta}_0+\theta)-H^1(x^I,I,\theta)\biggr)\\
%&
=0.
\end{align*}
Similarly, we show
\begin{align*}
\hat{M}_k^I(\theta,\alpha)
:=&\lim_{j\to\infty}\int_{\hat{T}_{-j}^{I,\alpha}}^{\hat{T}_j^{I,\alpha}}\bigl(D_x F_k(q^I(t;\alpha),I)\cdot
  J_{2m}D_xH^1(q^I(t;\alpha),I,\theta^I(t;\alpha)+\theta)\notag\\
&\qquad\qquad
 -D_I F_k(q^I(t;\alpha),I)D_\theta H^1(q^I(t;\alpha),I,\theta^I(t;\alpha)+\theta)\bigr)dt\\
 =& \bar{M}_k^I(\theta,\alpha),\quad
k=2,\ldots,m.
\end{align*}
%Thus for other choice of time sequence, one has another Melnikov vector and
%the conditions (i) and (ii) may be satisfied or not.
%However, one need to verify the conditions (i) and (ii) for some choice of time sequence
%$\{T_j^{I,\alpha}\}_{j=-\infty}^\infty$
%in order to obtain the transverse intersection of the stable and unstable manifolds
% $W^\s(\gamma_{I,\epsilon}^\p)$ and $W^\u(\gamma_{I,\epsilon}^\p)$.
 \end{rmk}

Theorem~\ref{thm:W} means that the homoclinic orbit
 $\gamma_{I,\alpha,\theta_0}^\h(t)$ persists in \eqref{g-mel}
 for $\epsilon>0$ sufficiently small
 if the Melnikov vector $\bar{M}^I(\theta,\alpha)$ satisfies its hypotheses.
By the Smale-Birkhoff theorem \cite{GH83,W90}, such transverse intersection
 between the stable and unstable manifolds of periodic orbits
 implies that chaotic motions occur in \eqref{g-mel}.

We now describe a relationship of our results on persistence of first integrals
 with the homoclinic Melnikov methods for \eqref{g-mel},
 in which the Hamiltonian $H_\epsilon(x,I,\theta)$ is always a persisting first integral.
We have the following result.

\begin{thm}\label{thm:hom-mel2}
Suppose that assumptions (W1)-(W3) hold.
If the Hamiltonian system~\eqref{g-mel}
 has a $C^3$ first integral $F_{k,\epsilon}(x,I,\theta)=F_k(x,I)+O(\epsilon)$
 {\rm(}resp. $F_{m+1,\epsilon}(x,I,\theta)=I+O(\epsilon))$
 depending $C^2$ smoothly on $\epsilon$
 in a neighborhood of
\[
\bar{\Gamma}_{I_0,\theta_0,\alpha_0}^\h
 =\{(q^{I_0}(t;\alpha_0),I_0,\theta^{I_0}(t;\alpha_0))\mid t\in\Rset\}
\]
for some $k=2,\ldots,m$,
 then there exists a connected open set
 $\bar{\Pi}\subset V\times\Sset_{2\pi}^1\times\bar{V}$
 such that $(I_0,\theta_0,\alpha_0)\in\bar{\Pi}$
 and the Melnikov function $\bar{M}_k^{I}(\theta,\alpha)=0$
 {\rm(}resp. $\bar{M}_1^{I}(\theta,\alpha)=0)$ on $\bar{\Pi}$.
\end{thm}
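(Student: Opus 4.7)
The plan is to apply Theorem~\ref{thm:main4} to each unperturbed homoclinic orbit $\gamma^\h_{I,\theta,\alpha}(t)=(q^I(t;\alpha),I,\theta^I(t;\alpha)+\theta)$ lying in a suitable open neighborhood of $(I_0,\theta_0,\alpha_0)$, and then to identify the abstract integral $\tilde{\mathscr{I}}_{F_k,\gamma^\h_{I,\theta,\alpha}}$ supplied by that theorem with the Melnikov function $\bar{M}^I_k(\theta,\alpha)$ defined in \eqref{eqn:barMk} (or with $\bar{M}^I_1$ in the case $F_{m+1}=I$).

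First I would let $\bar{U}$ denote the neighborhood of $\bar{\Gamma}^\h_{I_0,\theta_0,\alpha_0}$ on which the perturbed first integral $F_{k,\epsilon}$ (resp.\ $F_{m+1,\epsilon}$) is defined. By the smooth dependence of $q^I(t;\alpha)$ and $\theta^I(t;\alpha)$ on $(I,\alpha)$ guaranteed by (W2), together with the continuity of the limiting periodic orbits $\gamma^\p_I$ in $I$, the family $\{\bar{\Gamma}^\h_{I,\theta,\alpha}\}$ varies uniformly continuously with $(I,\theta,\alpha)$ near $(I_0,\theta_0,\alpha_0)$. Hence, after shrinking $\bar{U}$ to a tubular neighborhood of $\bar{\Gamma}^\h_{I_0,\theta_0,\alpha_0}$ if necessary,
\[
\bar{\Pi}:=\{(I,\theta,\alpha)\in V\times\Sset^1_{2\pi}\times\bar{V}\mid\bar{\Gamma}^\h_{I,\theta,\alpha}\subset\bar{U}\}
\]
is a connected open neighborhood of $(I_0,\theta_0,\alpha_0)$. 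For each $(I,\theta,\alpha)\in\bar{\Pi}$, assumption (A1') holds with $\gamma^\p_I$ non-equilibrium (by $D_IH^0(x^I,I)>0$ in (W3)), assumption (A2') holds with $F=F_k(x,I)$ from (W1) (resp.\ $F=I$, which is trivially a first integral of \eqref{g-mel0}), and $F_{k,\epsilon}$ (resp.\ $F_{m+1,\epsilon}$) is a $C^3$ first integral of \eqref{g-mel} near $\bar{\Gamma}^\h_{I,\theta,\alpha}\subset\bar{U}$. Theorem~\ref{thm:main4} then yields $\tilde{\mathscr{I}}_{F_k,\gamma^\h_{I,\theta,\alpha}}=0$ (resp.\ $\tilde{\mathscr{I}}_{I,\gamma^\h_{I,\theta,\alpha}}=0$).

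It remains to identify this integral with the Melnikov function. Using the explicit form $X^1=(J_{2m}D_xH^1,\,-D_\theta H^1,\,D_IH^1)$ of the perturbation and the fact that $F_k(x,I)$ does not depend on $\theta$, a direct computation gives
\[
dF_k(X^1)=D_xF_k(x,I)\cdot J_{2m}D_xH^1(x,I,\theta)-D_IF_k(x,I)D_\theta H^1(x,I,\theta),
\]
which is exactly the integrand in \eqref{eqn:barMk}; similarly $dI(X^1)=-D_\theta H^1$ matches the integrand of \eqref{eqn:barM1} up to sign. With an appropriate choice of Poincar\'e section transverse to $\gamma^\p_I$ (whose existence is guaranteed by (W3)), the crossing-time sequence can be arranged to coincide with $\{T^{I,\alpha}_j\}$ of \eqref{eqn:Tj}, giving $\tilde{\mathscr{I}}_{F_k,\gamma^\h_{I,\theta,\alpha}}=\bar{M}^I_k(\theta,\alpha)$ (resp.\ $-\bar{M}^I_1(\theta,\alpha)$), whence the conclusion follows.

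The main obstacle I anticipate is the bookkeeping needed to match the Poincar\'e section and time-sequence convention of Theorem~\ref{thm:main4} with the one used in defining $\bar{M}^I_k$, and to check that the conditional limit in \eqref{Integ-hom-per} is independent of the particular sequence in this higher-dimensional setting; this should be dispatched along the same lines as the computation in Remark~\ref{rmk:4.2}, using the asymptotic behavior $q^I(t;\alpha)\to x^I$ as $t\to\pm\infty$ together with (W3).
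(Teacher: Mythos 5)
Your proposal is correct and follows essentially the same route as the paper: choose the Poincar\'e section $\{\theta=\mathrm{const}\}$ so the crossing times realize $\{T_j^{I,\alpha}\}$, identify $dF_k(X^1)$ (resp.\ $dI(X^1)$) with the integrand of \eqref{eqn:barMk} (resp.\ $-$ the integrand of \eqref{eqn:barM1}), construct the connected open set $\bar{\Pi}$ from the neighborhood on which $F_{k,\epsilon}$ is defined, and apply Theorem~\ref{thm:main4} to each $\gamma^\h_{I,\theta,\alpha}$. Your treatment is in fact slightly more explicit than the paper's about the sign in the $F=I$ case and about the independence of the conditional limit from the time sequence (via Remark~\ref{rmk:4.2}).
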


\begin{proof}
Assume that (W1)-(W3) hold.
We choose the Poincar\'e section
 $\Sigma=\{(x,I,\theta)\in\Rset^{2m}\times V\times\Sset_{2\pi}^1\mid\theta=\theta_0\}$
 and take $T_j= T_j^{I,\alpha}$, $j\in\Zset$ (cf. Eq.~\eqref{eqn:Tj}).
Letting $F=F_k$ for $k=2,\ldots,m$ (resp. $F=I$),
 we write the integral in \eqref{Integ-hom-per} as
\begin{align*}
&
\int_{T_{-j}^{I,\alpha}}^{T_j^{I,\alpha}}\bigl(D_x F_k(q^I(t;\alpha),I)\cdot
 J_{2m}D_x H^1(q^I(t;\alpha),I,\theta^I(t;\alpha)+\theta_0)\\
&\qquad\quad
-D_I F_k(q^I(t;\alpha),I)
D_\theta H^1(q^I(t;\alpha),I,\theta^I(t;\alpha)+\theta_0)\bigr)dt\\[1ex]
&\qquad
\left(\mbox{resp.}\quad
\int_{T_{-j}^{I,\alpha}}^{T_j^{I,\alpha}}
 D_\theta H^1(q^I(t;\alpha),I,\theta^I(t;\alpha)+\theta_0)dt
 \right)
\end{align*}
for the homoclinic orbit $\gamma_{I,\theta,\alpha_0}^\h(t)$.
We choose a connected open set $\bar{\Pi}\subset V\times\Sset_{2\pi}^1\times\bar{V}$
 such that the neighborhood of  $\bar{\Gamma}_{I_0,\theta_0,\alpha_0}^\h$
 contains $\bigcup_{(I,\theta,\alpha)\in\bar{\Pi}}\bar{\Gamma}_{I,\theta,\alpha}^\h$.
Applying Theorem~\ref{thm:main4} to the unperturbed homoclinic orbit
 $\gamma_{I,\theta,\alpha}^\h(t)$ for $(I,\theta,\alpha)\in\bar{\Pi}$,
 we obtain the desired result.
\end{proof}
%{
%As stated in Remark~\ref{rmk:4.2}, the Melnikov function $\bar{M}_k^{I}(\theta,\alpha)$ depends on the choice of the time sequences.
%If one verifies that $\bar{M}_k^{I}(\theta,\alpha)$ is not zero for some choice of the times, Theorem~\ref{thm:hom-mel2} implies 
%nonpersistence of the first integral $F_k(x,I)$.
%}
 \begin{rmk}
Under the hypotheses of Theorem~\ref{thm:hom-mel2}
 the following hold  as in Remarks~\ref{rmk:4.3a} and \ref{rmk:4.3b}:
\begin{enumerate}
\setlength{\leftskip}{-1.5em}
%\item[(i)]
%Persistence of the first integral $H_0$ is trivial since the Hamiltonian $H_\epsilon$ is a first integral of Eq.~\eqref{g-mel}.
\item[(i)]
It follows from Theorem~\ref{thm:main5} that
 if the homoclinic orbit $\gamma_{I_0,\theta_0,\alpha_0}^\h(t)$ persists in \eqref{g-mel},
 then $\bar{M}^{I_0}(\theta_0,\alpha_0)=0$;
\item[(ii)]
If Eq.~\eqref{g-mel} has such a first integral near $\bar{\Gamma}^\h$,
 then the corresponding Melnikov function
 is identically zero on $V\times\Sset_{2\pi}^1\times\bar{V}$;
\item[(iii)]
If $H^0,H^1$ are analytic
 and Eq.~\eqref{g-mel} has such a first integral except for $H_\epsilon$
 near $\bar{\Gamma}_{I_0,\theta_0,\alpha_0}^\h$
 with some $(I_0,\theta_0,\alpha_0)\in\bar{\Pi}$,
 then $\bar{M}^{I}(\theta)=0$ is identically zero on $\tilde{\Gamma}^\h$.
\end{enumerate}
The statement of part (i) consists with Theorem~\ref{thm:W}.
\end{rmk}
}

\section{Examples}\label{Application}

We now illustrate the above theory for {four} examples:
 The periodically forced Duffing oscillator \cite{GH83,W90,Y94,Y96},
 {two identical pendula coupled with a harmonic oscillator,}
 a periodically forced rigid body \cite{Y18}
 and a three-mode truncation of a buckled beam \cite{Y01}.

\subsection{Periodically forced Duffing oscillator}

\begin{figure}[t]
\includegraphics[scale=0.27,bb=0 0 690 572]{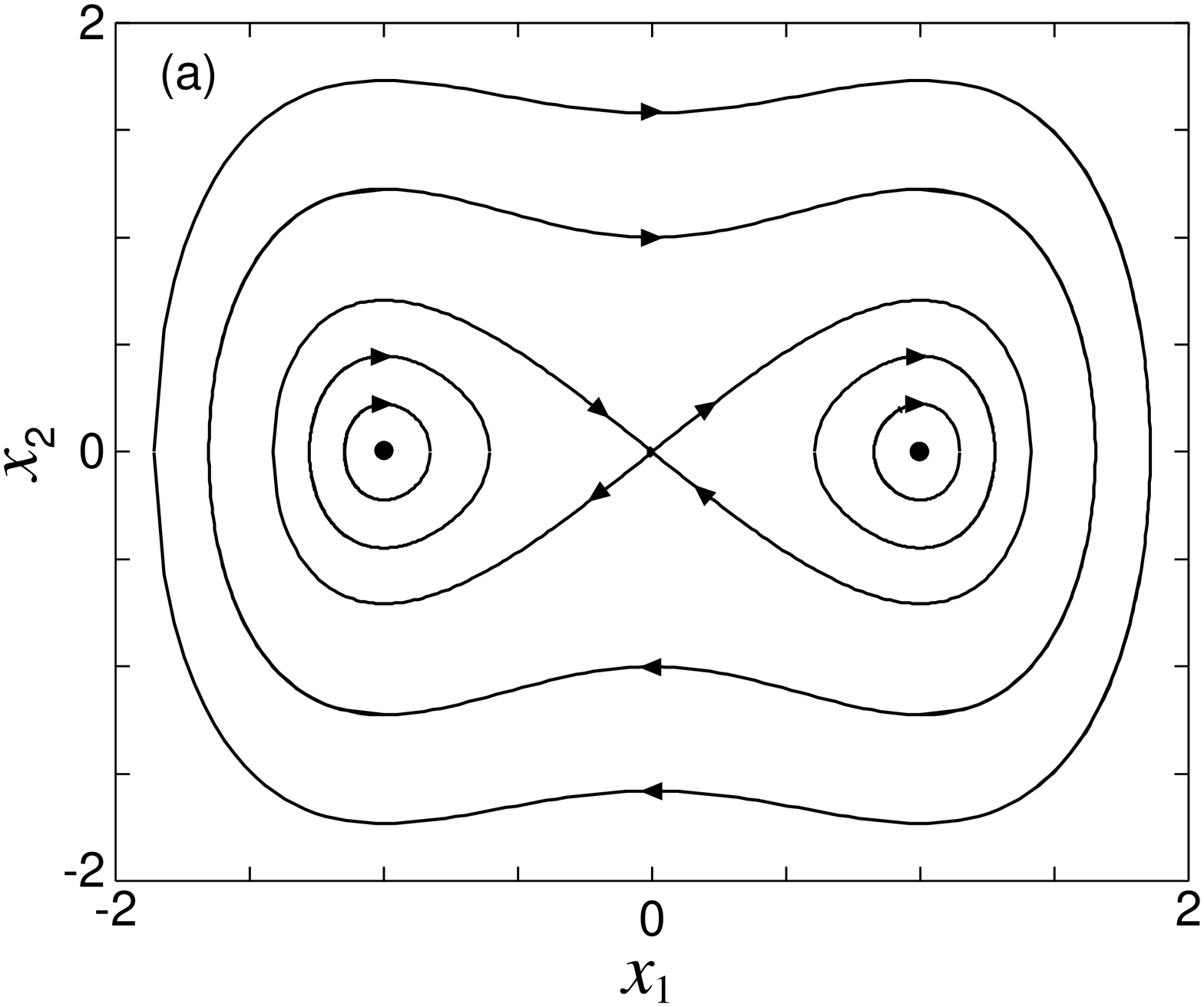}\quad
\includegraphics[scale=0.28,bb=0 0 559 551]{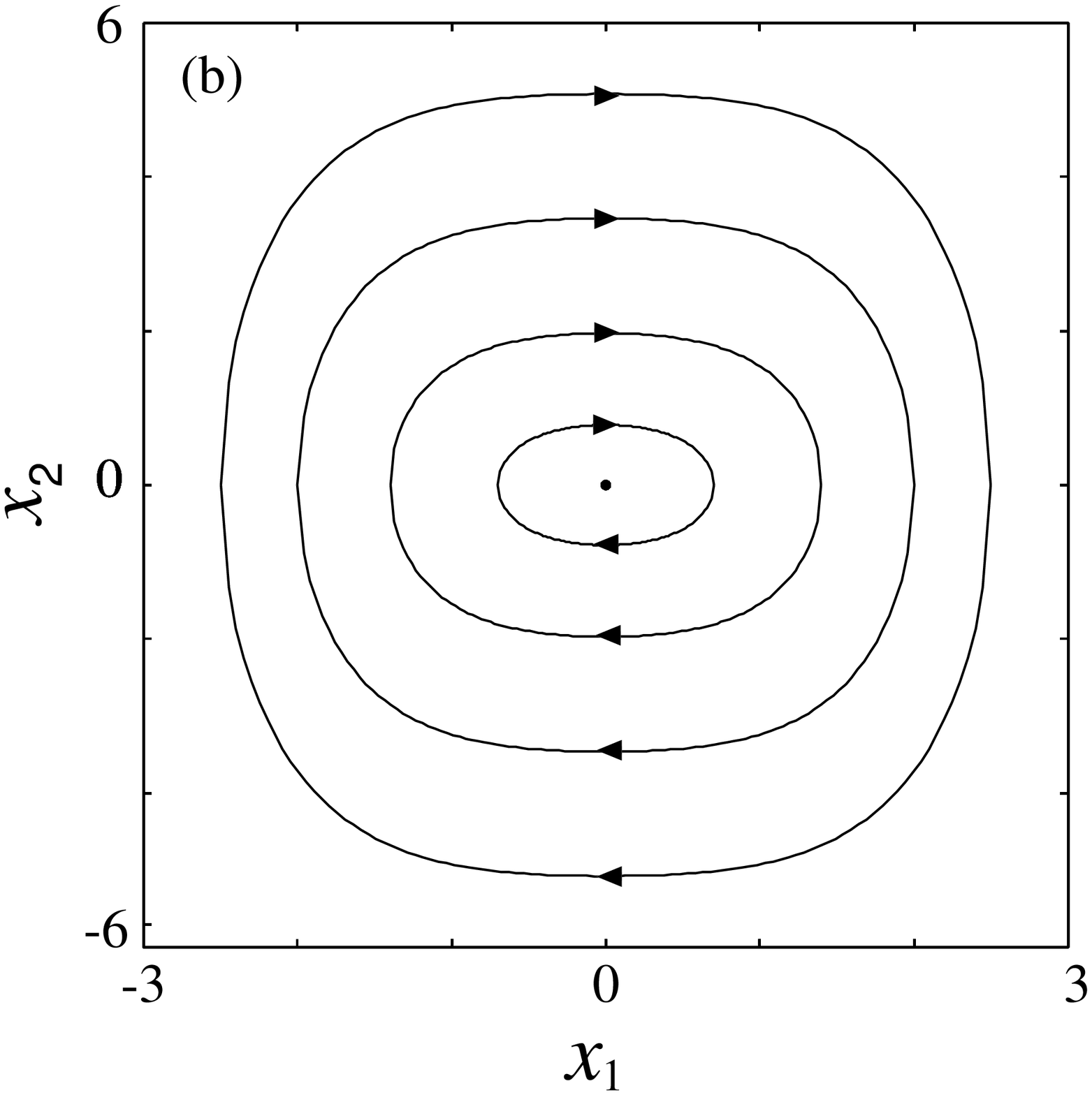}
\caption{Phase portraits of \eqref{Duffing} with $\epsilon=0$: (a) $a=1$; (b) $a=-1$.
\label{fig:5a}}
\end{figure}

We {first} consider the periodically forced Duffing oscillator
\begin{equation}\label{Duffing}
\dot{x}_1=x_2,\quad
\dot{x}_2=ax_1-x_1^3+\epsilon(\beta\cos\omega t-\delta x_2),
\end{equation}
where {$x_1,x_2\in\Rset$},
 $a=1$ or $-1$, and $\beta,\delta,\omega$ are positive constants.
When $\epsilon=0$,
 Eq.~\eqref{Duffing} becomes a single-degree-of-freedom Hamiltonian system
 with the Hamiltonian
\begin{equation}\label{Ham-Duffing}
H=-\frac{1}{2}ax_1^2+\frac{1}{4}x_1^4+\frac{1}{2}x_2^2,
\end{equation}
and it is a special case of \eqref{mel-aut}.
See Fig.~\ref{fig:5a} for the phase portraits of \eqref{Duffing} with $\epsilon=0$.

We begin with the case of $a=1$.
When $\epsilon=0$, in the phase plane
 there exist a pair of homoclinic orbits
\[
q^\h_\pm(t)=(\pm\sqrt{2}\sech t, \mp\sqrt{2}\sech t\,\tanh t),
\]
a pair of one-parameter families of periodic orbits
\begin{align*}
q^k_{\pm}(t)
 =&\biggl(\pm\frac{\sqrt{2}}{\sqrt{2-k^2}}\dn\left(\frac{t}{\sqrt{2-k^2}}\right),\\
& \quad\mp\frac{\sqrt{2}k^2}{2-k^2}\sn\left(\frac{t}{\sqrt{2-k^2}}\right)
\cn\left(\frac{t}{\sqrt{2-k^2}}\right)\biggr),\quad
k\in(0,1),
\end{align*}
inside each of them, and a one-parameter periodic orbits
\begin{align*}
\tilde{q}^k(t)
 =&\biggl(\frac{\sqrt{2}k}{\sqrt{2k^2-1}}\cn\left(\frac{t}{\sqrt{2k^2-1}}\right),\\
& \quad -\frac{\sqrt{2}k}{2k^2-1}\sn\left(\frac{t}{\sqrt{2k^2-1}}\right) 
 \dn\left(\frac{t}{\sqrt{2k^2-1}}\right)\biggr),\quad
k\in\bigl(1/\sqrt{2},1\bigr),
\end{align*}
outside of them, as shown in Fig.~\ref{fig:5a}(a),
 where $\sn$, $\cn$ and $\dn$ represent the Jacobi elliptic functions with the elliptic modulus $k$.
{See \cite{BF54} for general information on elliptic functions.}
The periods of $q^k_{\pm}(t)$ and $\tilde{q}^k(t)$ are given
 by $T^k=2K(k)\sqrt{2-k^2}$ and $\tilde{T}^k=4K(k)\sqrt{2k^2-1}$, respectively,
 where $K(k)$ is the complete elliptic integral of the first kind.
See also \cite{GH83,W90}.

Assume that the resonance conditions
\begin{equation}
lT^k=\frac{2\pi m}{\omega},\quad\mbox{i.e.,}\quad
\omega=\frac{2\pi m}{2lK(k)\sqrt{2-k^2}},
\label{eqn:resk}
\end{equation}
and
\begin{equation}
l\tilde{T}^k=\frac{2\pi m}{\omega},\quad\mbox{i.e.,}\quad
\omega=\frac{2\pi m}{4lK(k)\sqrt{2k^2-1}},
\label{eqn:tresk}
\end{equation}
hold for $q_\pm^k(t)$ and $\tilde{q}^k(t)$, respectively,
 with $l,m>0$ relatively prime integers.
We compute the subharmonic Melnikov function \eqref{eqn:subM}
 for $q_\pm^k(t)$ and $\tilde{q}^k(t)$ as
\[
M_\pm^{m/l}(\tau)=-\delta J_1(k,l)\pm\beta J_2(k,m,l)\sin\tau
\]
and
\[
\tilde{M}^{m/l}(\tau)=-\delta\tilde{J}_1(k,l)+\beta\tilde{J}_2(k,m,l)\sin\tau,
\]
respectively, where
\begin{align*}
&
J_1(k,l)=\frac{4l[(2-k^2)E(k)-2k'^2K(k)]}{3(2-k^2)^{3/2}},\\
&
J_2(k,m,l)=
\begin{cases}
\sqrt{2}\pi \omega\sech\left(\displaystyle\frac{m\pi K(k')}{K(k)}\right) & \mbox{(for $l=1$)};\\
0\quad & \mbox{(for $l\neq 1$)},\
\end{cases}\\
&
\tilde{J}_1(k,l)=\frac{8l[(2k^2-1)E(k)+k'^2K(k)]}{3(2k^2-1)^{3/2}},\\
&
\tilde{J}_2(k,m,l)=
\begin{cases}
2\sqrt{2}\pi \omega\sech\left(\displaystyle\frac{m\pi K(k')}{2K(k)}\right)
 & \mbox{(for $l=1$ and $m$ odd)}; \\
0 & \mbox{(for $l\neq 1$ or $m$ even).}
\end{cases}
\end{align*}
Here $E(k)$ is the complete elliptic integral of the second kind 
 and $k'=\sqrt{1-k^2}$ is the complimentary elliptic modulus.
We see that the subharmonic Melnikov functions $M_\pm^{m/l}(\tau)$ and $\tilde{M}^{m/l}(\tau)$
 are not identically zero on any connected open set in $\Sset_T^1$.
We also compute the homoclinic Melnikov function \eqref{eqn:homM} for $q_\pm^\h(t)$ as
\[
M_\pm(\tau)
 =-\frac{4}{3}\delta\pm\sqrt{2}\pi\omega\beta\csch\left(\frac{\pi\omega}{2}\right)\sin\tau,
\]
which is not identically zero on any connected open set in $\Sset_T^1$.
See also \cite{GH83,W90} for the computations of the Melnikov functions.

Let
\begin{align*}
&
R=\{k\in(0,1)\mid \mbox{$k$ satisfies \eqref{eqn:resk} for $m,l\in\Nset$}\},\\
&
\tilde{R}=\bigl\{k\in\bigl(1/\sqrt{2},1\bigr)\mid
\mbox{$k$ satisfies \eqref{eqn:tresk} for $m,l\in\Nset$}\bigr\},
\end{align*}
and let
\begin{align*}
&S_\pm^k=\{(x,\theta)\in\Rset^2\times\Sset_T^1\mid x=q_\pm^k(t),\},\\
&
\tilde{S}^k=\{(x,\theta)\in\Rset^2\times\Sset_T^1\mid x=\tilde{q}^k(t)\},\\
&
S_\pm^\h=\{(x,\theta)\in\Rset^2\times\Sset_T^1\mid x=q_\pm^\h(t)\}.
\end{align*}
Applying Theorems~\ref{thm:sub-mel} and \ref{thm:hom-mel},
 we obtain the following.

\begin{prop}
\label{prop:5a}
The first integral \eqref{Ham-Duffing} does not persist
 near $S_\pm^k$ for $k\in R$, $\tilde{S}^k$ for $k\in\tilde{R}$, and $S_\pm^\h$
 in \eqref{Duffing} with $a=1$ for $\epsilon>0$.
\end{prop}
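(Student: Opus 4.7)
The plan is to reduce Proposition~\ref{prop:5a} to three direct applications of Theorems~\ref{thm:sub-mel} and \ref{thm:hom-mel}, using the explicit Melnikov computations that have already been set up in the excerpt. Since all three Melnikov functions have the form ``nonzero constant $+$ oscillatory term in $\tau$'', I will only need to verify that each is not identically zero on any connected open subset of $\Sset_T^1$, after which the contrapositives of those two theorems give the conclusion.

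First, for the inner periodic families $q_\pm^k(t)$ with $k\in R$, I would fix $m,l\in\Nset$ with $(m,l)=1$ satisfying \eqref{eqn:resk}, pick any $\tau_0\in\Sset_T^1$, and observe that $\bigcup_{\tau\in\Pi}\Gamma_\tau^{q_\pm^k}$ fills out a neighborhood of $S_\pm^k$ as $\Pi$ varies over connected open subsets of $\Sset_T^1$, so a persisting first integral $F_\epsilon=H+O(\epsilon)$ near $S_\pm^k$ would, by Theorem~\ref{thm:sub-mel}, force $M_\pm^{m/l}(\tau)\equiv 0$ on some open $\Pi\subset\Sset_T^1$. But $M_\pm^{m/l}(\tau)=-\delta J_1(k,l)\pm\beta J_2(k,m,l)\sin\tau$ is real-analytic in $\tau$, and its constant part $-\delta J_1(k,l)$ is nonzero because $\delta>0$ and the integrand defining $J_1(k,l)$ (which, up to the positive prefactor, is $\int_0^{lT^k}x_2(t)^2\,dt$) is strictly positive. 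Hence $M_\pm^{m/l}$ cannot vanish on any open set, a contradiction. The argument for $\tilde{q}^k(t)$ with $k\in\tilde{R}$ is identical, replacing $(J_1,J_2,M_\pm^{m/l})$ by $(\tilde{J}_1,\tilde{J}_2,\tilde{M}^{m/l})$ and using that $\tilde{J}_1(k,l)>0$.

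For the homoclinic families $q_\pm^\h(t)$, I would analogously apply Theorem~\ref{thm:hom-mel}: a first integral $F_\epsilon=H+O(\epsilon)$ persisting near $S_\pm^\h$ would yield a connected open $\Pi\subset\Sset_T^1$ on which $M_\pm(\tau)\equiv 0$. Since
\[
M_\pm(\tau)=-\tfrac{4}{3}\delta\pm\sqrt{2}\pi\omega\beta\csch\!\left(\tfrac{\pi\omega}{2}\right)\sin\tau
\]
is real-analytic with nonzero constant part $-\tfrac{4}{3}\delta$ (as $\delta>0$), it cannot be identically zero on any open subset of $\Sset_T^1$, giving the required contradiction.

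There is no genuine obstacle here: the heavy lifting has already been done in setting up the Melnikov integrals, which are standard for the Duffing equation (cf.\ \cite{GH83,W90}), and in proving Theorems~\ref{thm:sub-mel} and \ref{thm:hom-mel}. The only point meriting a brief comment is the degenerate branch $J_2(k,m,l)=0$ or $\tilde{J}_2(k,m,l)=0$ arising when $l\neq 1$ (or when $m$ is even in the outer family), since in that case the Melnikov function reduces to a nonzero constant; it is still not identically zero, so the conclusion continues to hold without any extra work.
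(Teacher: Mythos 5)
Your proposal is correct and follows essentially the same route as the paper, which states Proposition~\ref{prop:5a} as an immediate consequence of Theorems~\ref{thm:sub-mel} and \ref{thm:hom-mel} once the explicit Melnikov functions are seen to be non-vanishing on every connected open subset of $\Sset_T^1$. Your added justifications --- the positivity of $J_1(k,l)$ and $\tilde{J}_1(k,l)$ via $\int_0^{lT^k}x_2(t)^2\,dt>0$, and the remark on the degenerate branches where $J_2$ or $\tilde{J}_2$ vanishes --- are correct details that the paper leaves implicit.
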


\begin{rmk}
When $\beta>0$ but $\delta=0$, so that Eq.~\eqref{Duffing} is Hamiltonian,
 the statement of Proposition~\ref{prop:5a} {still} holds
 near $S^k_\pm$ for $k\in R_1$, $\tilde{S}^k$ for $k\in\tilde{R}_\mathrm{o}$,
 and $S_\pm^\h$, where
\begin{align*}
&
R_1=\{k\in(0,1)\mid \mbox{$k$ satisfies \eqref{eqn:resk} with $l=1$}\},\\
&
\tilde{R}_\mathrm{o}=\bigl\{k\in\bigl(1/\sqrt{2},1\bigr)\mid
 \mbox{$k$ satisfies \eqref{eqn:tresk} with $l=1$ and $m$ odd}\bigr\},
\end{align*}
\end{rmk}

We turn to the case of $a=-1$.
When $\epsilon=0$, in the phase plane
 there exists a one-parameter family of periodic orbits
\begin{align*}
{\gamma^k(t)}
 =&\biggl({\frac{\sqrt{2}k}{\sqrt{1-2k^2}}}\cn\left(\frac{t}{\sqrt{1-2k^2}}\right),\\
& \quad-\frac{\sqrt{2}k}{1-2k^2}\sn\left(\frac{t}{\sqrt{1-2k^2}}\right)
\dn\left(\frac{t}{\sqrt{1-2k^2}}\right)\biggr),\quad
k\in\bigl(0,1/\sqrt{2}\bigr),
\end{align*}
as shown in Fig.~\ref{fig:5a}(b),
 and their period is given by $\hat{T}^k=4K(k)\sqrt{1-2k^2}$.
See also \cite{Y94,Y96}.
Assume that the resonance conditions
\begin{equation}
l\hat{T}^k=\frac{2\pi m}{\omega},\quad\mbox{i.e.,}\quad
\omega=\frac{\pi m}{2lK(k)\sqrt{1-2k^2}}
\label{eqn:hresk}
\end{equation}
holds for $l,m>0$ relatively prime integers.
We compute the subharmonic Melnikov function \eqref{eqn:subM}
 for $\gamma^k(t)$ as
\[
\hat{M}^{m/l}(\tau)=-\delta\hat{J}_1(k,l)\pm\beta\hat{J}_2(k,m,l)\sin\tau,
\]
where
\begin{align*}
&
\hat{J}_1(k,l)=\frac{8l[(2k^2-1)E(k)+k'^2K(k)]}{3(1-2k^2)^{3/2}},\\
&
\hat{J}_2(k,m,l)=
\begin{cases}
\displaystyle
\frac{\sqrt{2}\pi^2m}{K(k)\sqrt{1-2k^2}}
\sech\left(\frac{\pi mK(k')}{2K(k)}\right) & \mbox{(for $l=1$ and $m$ odd)};\\
0\quad & \mbox{(for $l\neq 1$ or $m$ even)}.
\end{cases}
\end{align*}
See also \cite{Y94,Y96} for the computations of the Melnikov function.
Thus, the Melnikov function $\bar{M}^{m/l}(\tau)$
 is not identically zero on any connected open set in $\Sset_T^1$.

Let
\[
\hat{R}=\bigl\{k\in\bigl(0,1/\sqrt{2}\bigr)\mid
 \mbox{$k$ satisfies \eqref{eqn:hresk} for $m,l\in\Nset$}\bigr\}
\]
and let
\[
\hat{S}^k=\{(x,\theta)\in\Rset^2\times\Sset_T^1\mid x=\gamma^k(t)\}.
\]
Applying Theorem~\ref{thm:sub-mel}, we obtain the following.

\begin{prop}
The first integral \eqref{Ham-Duffing} does not persist
 near $\hat{S}^k$ for $k\in\hat{R}$ in \eqref{Duffing} with $a=-1$ for $\epsilon>0$.
\end{prop}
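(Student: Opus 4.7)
The plan is to mimic the proof of Proposition~\ref{prop:5a} and apply Theorem~\ref{thm:sub-mel} in contrapositive form. Assumption~(M) is satisfied by the family $\gamma^k(t)$ with $k\in(0,1/\sqrt{2})$ and period $\hat{T}^k=4K(k)\sqrt{1-2k^2}$, and for $k\in\hat{R}$ the resonance condition $l\hat{T}^k=2\pi m/\omega$ holds for some relatively prime $l,m\in\Nset$. If $H$ were to persist as a $C^3$ first integral $F_\epsilon=H+O(\epsilon)$ in a neighborhood of $\hat{S}^k=\bigcup_{\tau\in[0,T)}\hat{\Gamma}^k_\tau$ with $\hat{\Gamma}^k_\tau=\{(\gamma^k(t-\tau),t)\mid t\in[0,mT)\}$, then by Theorem~\ref{thm:sub-mel} there would exist a connected open $\Pi\subset\Sset_T^1$ on which the subharmonic Melnikov function $\hat{M}^{m/l}(\tau)$ vanishes identically. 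So it suffices to rule this out.

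I would argue from the explicit formula
\[
\hat{M}^{m/l}(\tau)=-\delta\hat{J}_1(k,l)\pm\beta\hat{J}_2(k,m,l)\sin\tau
\]
by splitting into two cases. When $l=1$ and $m$ is odd, $\hat{J}_2(k,m,l)\neq 0$, and since $\beta>0$ the function $\sin\tau$ is non-constant analytic on $\Sset_T^1$, so $\hat{M}^{m/l}(\tau)$ is a non-constant analytic function and cannot vanish on any open interval by the identity theorem. When $l\neq 1$ or $m$ even, $\hat{J}_2(k,m,l)=0$, so $\hat{M}^{m/l}(\tau)$ reduces to the constant $-\delta\hat{J}_1(k,l)$, and this constant is nonzero because $\delta>0$ and
\[
(2k^2-1)E(k)+k'^2K(k)\neq 0
\]
for $k\in(0,1/\sqrt{2})$; in either subcase $\hat{M}^{m/l}\not\equiv 0$ on any connected open set in $\Sset_T^1$.

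Combining these two cases with Theorem~\ref{thm:sub-mel}, we conclude that no such persistent first integral $F_\epsilon$ exists near $\hat{S}^k$ for $k\in\hat{R}$, which is the desired statement. The main technical point is the verification that $\hat{J}_1(k,l)$ is nonzero for all $k\in(0,1/\sqrt{2})$; this is a routine elliptic-integral inequality (one can, for instance, show that the combination $(2k^2-1)E(k)+k'^2K(k)$ has a definite sign on the interval by monotonicity in $k$ and boundary values at $k=0$), but it is the only step that is not purely formal. The rest is bookkeeping: choosing the right $\Pi$ (any open arc containing a prospective $\tau_0$) and transferring the non-vanishing of $\hat{M}^{m/l}$ into non-persistence of $H$ through the contrapositive of Theorem~\ref{thm:sub-mel}.
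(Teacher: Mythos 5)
Your argument is correct and follows essentially the same route as the paper: compute $\hat{M}^{m/l}(\tau)=-\delta\hat{J}_1(k,l)\pm\beta\hat{J}_2(k,m,l)\sin\tau$, observe it is not identically zero on any connected open subset of $\Sset_T^1$, and apply Theorem~\ref{thm:sub-mel} in contrapositive. The only content beyond what the paper writes is your explicit verification that $(2k^2-1)E(k)+k'^2K(k)\neq 0$ on $\bigl(0,1/\sqrt{2}\bigr)$ (note this quantity vanishes at $k=0$, so the "definite sign" comes from its derivative $3k(2E(k)-K(k))>0$ rather than from the boundary value itself), which the paper leaves implicit.
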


\begin{rmk}
When $\beta>0$ but $\delta=0$, i.e., Eq.~\eqref{Duffing} is Hamiltonian,
 the statement of Proposition~\ref{prop:5b} {still} holds
 near $\hat{S}^k$ for $k\in\hat{R}_\mathrm{o}$, where
\[
\hat{R}_\mathrm{o}=\bigl\{k\in\bigl(0,1/\sqrt{2}\bigr)\mid
 \mbox{$k$ satisfies {\eqref{eqn:hresk}} with $l=1$ and $m$ odd}\bigr\}.
\]
\end{rmk}

{
\subsection{Two pendula coupled with a harmonic oscillator}

\begin{figure}
\includegraphics[scale=1.2,bb=0 0 108 106]{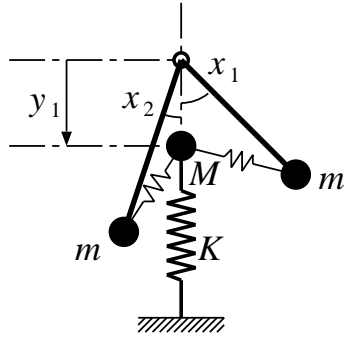}
\caption{Two identical pendula coupled with a harmonic oscillator.
\label{fig:ex2}}
\end{figure}

We next consider the three-degree-of-freedom Hamiltonian system
\begin{equation}
\begin{split}
&
\dot{x}_1=x_3,\quad
\dot{x}_3=-\sin x_1-\epsilon y_1\sin x_1,\\
&
\dot{x}_2=x_4,\quad
\dot{x}_4=-\sin x_2-\epsilon y_1\sin x_2,\\
&
\dot{y}_1=y_2,\quad
\dot{y}_2=-\omega_0^2y_1+\epsilon(\cos x_1+\cos x_2)
\end{split}
\label{eqn:ex2}
\end{equation}
with the Hamiltonian
\[
H=-\cos x_1-\cos x_2+\frac{1}{2}(x_3^2+x_4^2+\omega_0^2y_1^2+y_2^2)
 -\epsilon y_1(\cos x_1+\cos x_2),
\]
where $x_1,x_2\in\Sset_{2\pi}^1$, $x_3,x_4,y_1,y_2\in\Rset$
 and $\omega_0$ is a positive constant.
The system~\eqref{eqn:ex2}
 represents non-dimensionalized equations of motion for two identical pendula coupled
 with a harmonic oscillator shown in Fig.~\ref{fig:ex2}.
Here the gravitational force acts downwards,
 and the spring $K$ generates a restoring force $Ky_1$,
 where $y_1$ is the displacement of the mass $M=m$
 from the pivot of the pendula.
Linear restoring forces with a spring constant of $O(\epsilon)$ and zero natural length also occur
 between the two masses $m$ and the mass $M$.
In particular, $\omega_0^2=K\ell/Mg+O(\epsilon)$,
 where $g$ is the gravitational acceleration and $\ell$ is the length from the pivot to the mass $m$.

Introduce the action-angle coordinates $(I,\theta)\in\Rset_+\times\Sset_{2\pi}^1$
 such that
\[
y_1=\sqrt{\frac{2I}{\omega_0}}\sin\theta,\quad
y_2=\sqrt{2\omega_0I}\cos\theta
\]
and rewrite \eqref{eqn:ex2} as
\begin{equation}
\begin{split}
&
\dot{x}_1=x_3,\quad
\dot{x}_3=-\sin x_1-\epsilon\sqrt{\frac{2I}{\omega_0}}\sin\theta\sin x_1,\\
&
\dot{x}_2=x_4,\quad
\dot{x}_4=-\sin x_2-\epsilon\sqrt{\frac{2I}{\omega_0}}\sin\theta\sin x_2,\\
&
\dot{I}=\epsilon\sqrt{\frac{2I}{\omega_0}}\cos\theta(\cos x_1+\cos x_2),\\
&
\dot{\theta}=\omega_0-\epsilon\frac{\sin\theta}{\sqrt{2\omega_0 I}}(\cos x_1+\cos x_2),
\end{split}
\label{eqn:ex2a}
\end{equation}
which has the form \eqref{g-mel} with
\begin{align*}
&
H^0(x,I)=-\cos x_1-\cos x_2+\frac{1}{2}(x_3^2+x_4^2)+\omega_0 I,\\
&
H^1(x,I,\theta)=-\sqrt{\frac{2I}{\omega_0}}\sin\theta(\cos x_1+\cos x_2).
\end{align*}
where $\Rset_+$ denotes the set of nonnegative real numbers.
When $\epsilon=0$, the $x$-component of \eqref{eqn:ex2a}
 has a first integral
\[
F_2(x,I)=-\cos x_1+\frac{1}{2}x_3^2
\]
and a hyperbolic equilibrium $x^I=(\pi,\pi,0,0)$
 to which there exist four one-parameter families of homoclinic orbits
\begin{align*}
&
q_{\pm,+}^I(t;\alpha)=(\pm2\arcsin(\tanh t),2\arcsin(\tanh(t+\alpha)),
 \pm2\sech t,2\sech(t+\alpha)),\\
&
q_{\pm,-}^I(t;\alpha)=(\pm2\arcsin(\tanh t),-2\arcsin(\tanh (t+\alpha))
 \pm2\sech t,-2\sech(t+\alpha)),
\end{align*}
where $\alpha\in\Rset$.
Thus, assumptions~(W1)-(W3) hold with $m=2$.

We compute \eqref{eqn:barMk}
 for the homocloinic orbits $(x,I,\theta)=(q_{\pm,\pm}^I(t;\alpha),I,\omega_0t+\theta_0)$ as
\begin{align*}
\bar{M}_2^I(\theta_0,\alpha)
=& -\sqrt{\frac{2I}{\omega_0}}\int_{-\infty}^\infty 2\sin(\omega_0 t+\theta_0)
 \sech t\,\sin(2\arcsin(\tanh t))dt\\
=& -4\sqrt{\frac{2I}{\omega_0}}\,\cos\theta_0\int_{-\infty}^\infty\sech^2 t\,\tanh t\,\sin\omega_0t\,dt\\
=& -\pi\sqrt{8\omega_0^3I}\,\csch\left(\frac{\pi\omega_0}{2}\right)\,\cos\theta_0.
\end{align*}
On the other hand,
 letting $\{T_j^{I,\alpha}\}_{j=-\infty}^\infty$ be a time sequence satisfying \eqref{eqn:Tj},
 we write the integral in \eqref{eqn:barM1} as
\begin{align*}
&
-\sqrt{\frac{2I}{\omega_0}}\int_{T_{-j}^{I,\alpha}}^{T_j^{I,\alpha}}\cos(\omega_0 t+\theta_0)
 (\cos(2\arcsin(\tanh t))\\
& \qquad
 +\cos(2\arcsin(\tanh(t+\alpha))))dt\\
&=-\sqrt{\frac{2I}{\omega_0}}
\biggl(\cos\theta_0
 \int_{T_{-j}^{I,\alpha}}^{T_j^{I,\alpha}}(1-2\tanh^2t)\cos\omega_0t\,dt\\
&\qquad
 -\sin\theta_0
 \int_{T_{-j}^{I,\alpha}}^{T_j^{I,\alpha}}(1-2\tanh^2t)\sin\omega_0t\,dt\\
 &\qquad
 %\qquad\qquad\qquad\qquad
 +\cos(\theta_0-\alpha\omega)
 \int_{T_{-j}^{I,\alpha}+\alpha}^{T_j^{I,\alpha}+\alpha}(1-2\tanh^2t)\cos\omega_0t\,dt\\
 &\qquad
 %\qquad\qquad\qquad\qquad\qquad\qquad
  -\sin(\theta_0-\alpha\omega)
 \int_{T_{-j}^{I,\alpha}+\alpha}^{T_j^{I,\alpha}+\alpha}(1-2\tanh^2t)\sin\omega_0t\,dt\biggr).
\end{align*}
Since
\[
\lim_{j\to\pm\infty}\omega_0 T_j^{I,\alpha}=0\mod 2\pi,
\]
we have
\begin{align*}
&
\lim_{j\to\infty}\int_{T_{-j}^{I,\alpha}}^{T_j^{I,\alpha}}(1-2\tanh^2t)\cos\omega_0t\,dt\\
&=\lim_{j\to\infty}\int_{T_{-j}^{I,\alpha}}^{T_j^{I,\alpha}}2(1-\tanh^2t)\,\cos\omega_0t\,dt
-\lim_{j\to\infty}\int_{T_{-j}^{I,\alpha}}^{T_j^{I,\alpha}}\cos\omega_0t\,dt\\
&=2\pi\omega_0\csch\left(\frac{\pi\omega_0}{2}\right)
\end{align*}
and
\begin{align*}
&
\lim_{j\to\infty}\int_{T_{-j}^{I,\alpha}}^{T_j^{I,\alpha}}(1-2\tanh^2t)\sin\omega_0t\,dt\\
&=\lim_{j\to\infty}\int_{T_{-j}^{I,\alpha}}^{T_j^{I,\alpha}}2(1-\tanh^2t)\,\sin\omega_0t\,dt
-\lim_{j\to\infty}\int_{T_{-j}^{I,\alpha}}^{T_j^{I,\alpha}}\sin\omega_0t\,dt%\\
%&
=0.
\end{align*}
Hence, we obtain
\begin{align*}
\bar{M}_1^I(\theta_0,\alpha)
=-\pi\sqrt{8\omega_0^3 I}\,\csch\left(\frac{\pi\omega_0}{2}\right)
 (\cos\theta_0+\cos(\theta_0-\omega_0\alpha)).
\end{align*}\color{black}
We see that $\bar{M}_k^I(\theta_0,\alpha)$, $k=1,2$,
 are not identically zero on any connected open set in $\Rset_+\times\Sset_{2\pi}^1\times\Rset$.
Applying Theorem~\ref{thm:hom-mel2}, we obtain the following.}
\begin{prop}
\label{prop:ex2}
The first integrals $F_2(x,I)$ and $I$ do not persist near
\[
\bar{\Gamma}^\h
 =\bar{\Gamma}_{+,+}^\h\cup\bar{\Gamma}_{+,-}^\h\cup\bar{\Gamma}_{-,+}^\h\cup\bar{\Gamma}_{-,-}^\h
\]
in \eqref{eqn:ex2a} for $\epsilon>0$, where
\[
\bar{\Gamma}_{\pm,\pm}^\h
 =\{(q_{\pm,\pm}^I(t;\alpha),I,\theta)\mid t\in\Rset,I\in\Rset_+,\theta\in\Sset_{2\pi}^1\}.
\]
\end{prop}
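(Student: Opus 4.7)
The plan is to deduce Proposition~\ref{prop:ex2} by a direct application of Theorem~\ref{thm:hom-mel2} to the system~\eqref{eqn:ex2a}, using the Melnikov functions $\bar{M}_1^I$ and $\bar{M}_2^I$ that have just been computed. First I would verify that the hypotheses (W1)-(W3) are satisfied: the claim $m=2$ was already noted, and one should check that $F_1=H^0$ together with $F_2(x,I)=-\cos x_1+\tfrac12 x_3^2$ are functionally independent and in involution (this is immediate since $F_2$ depends only on $(x_1,x_3)$ while the missing pair $(x_2,x_4)$ contributes the remaining independent integral $-\cos x_2+\tfrac12 x_4^2=H^0-\omega_0 I-F_2$), that the equilibrium $x^I=(\pi,\pi,0,0)$ of the $x$-subsystem is hyperbolic, and that the four families $q_{\pm,\pm}^I(t;\alpha)$ are genuinely one-parameter families of homoclinic orbits to $x^I$; assumption (W3) is trivial because $D_I H^0\equiv\omega_0>0$.

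Next I would argue by contradiction. Suppose that $F_2(x,I)$ (respectively $I$) extends to a $C^3$ first integral $F_{k,\epsilon}$ of \eqref{eqn:ex2a} depending $C^2$-smoothly on $\epsilon$ in a neighborhood of some point of $\bar{\Gamma}^\h$, say in a neighborhood of $\bar{\Gamma}_{s_1,s_2,I_0,\theta_0,\alpha_0}^\h$ with $(s_1,s_2)\in\{+,-\}^2$. Then Theorem~\ref{thm:hom-mel2} yields a connected open set $\bar{\Pi}\subset\Rset_+\times\Sset_{2\pi}^1\times\Rset$ containing $(I_0,\theta_0,\alpha_0)$ on which $\bar{M}_2^I(\theta,\alpha)\equiv 0$ (respectively $\bar{M}_1^I(\theta,\alpha)\equiv 0$). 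This is inconsistent with the explicit formulas
\[
\bar{M}_2^I(\theta_0,\alpha)=-\pi\sqrt{8\omega_0^3 I}\,\csch\!\left(\tfrac{\pi\omega_0}{2}\right)\cos\theta_0,
\]
\[
\bar{M}_1^I(\theta_0,\alpha)=-\pi\sqrt{8\omega_0^3 I}\,\csch\!\left(\tfrac{\pi\omega_0}{2}\right)\bigl(\cos\theta_0+\cos(\theta_0-\omega_0\alpha)\bigr),
\]
which are real-analytic in $(I,\theta_0,\alpha)$ on $\Rset_+\times\Sset_{2\pi}^1\times\Rset$ and evidently not identically zero on any nonempty open subset; hence the zero sets of $\bar{M}_1^I$ and $\bar{M}_2^I$ have empty interior, giving a contradiction.

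Since the argument is independent of the choice of sign pair $(s_1,s_2)$ — both Melnikov integrals reduce to the same hyperbolic-secant integrals regardless of the $\pm$ signs entering $q_{\pm,\pm}^I$ (the sign only affects the parity of integrands in $t$, and the computed expressions above are the common output after accounting for the even/odd cancellations) — the conclusion holds on each of the four pieces $\bar{\Gamma}_{\pm,\pm}^\h$, and therefore on the whole $\bar{\Gamma}^\h$. The only non-routine step is ensuring that the Melnikov functions are not identically zero on open sets, but this is clear from the factor $\cos\theta_0$ in $\bar{M}_2^I$ and from the fact that $\bar{M}_1^I(\theta,\alpha)=-\pi\sqrt{8\omega_0^3 I}\csch(\pi\omega_0/2)\cdot 2\cos(\theta-\omega_0\alpha/2)\cos(\omega_0\alpha/2)$, which vanishes only on a codimension-one analytic subvariety of $\Rset_+\times\Sset_{2\pi}^1\times\Rset$.
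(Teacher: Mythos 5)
Your proposal is correct and follows essentially the same route as the paper: verify (W1)--(W3), use the explicitly computed Melnikov functions $\bar{M}_1^I$ and $\bar{M}_2^I$, and apply Theorem~\ref{thm:hom-mel2} together with the observation that these functions do not vanish identically on any connected open subset of $\Rset_+\times\Sset_{2\pi}^1\times\Rset$. The paper's proof is exactly this (stated tersely as ``Applying Theorem~\ref{thm:hom-mel2}, we obtain the following''), so your added details on the contradiction structure and the sign-independence across the four families $\bar{\Gamma}_{\pm,\pm}^\h$ are just a fuller write-up of the same argument.
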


\begin{rmk}
We have
\[
\det D\bar{M}^I(\theta,\alpha)
 =-4\pi^2\omega_0^2 I\,\csch^2\left(\frac{\pi\omega_0}{2}\right)
 \sin\theta_0\,\sin(\theta_0-\omega\alpha).
\]
Hence, if $\bar{M}^I(\theta,\alpha)=0$, then $\det D\bar{M}^I(\theta,\alpha)\neq 0$.
From Theorem~\ref{thm:W} we see that the stable and unstable manifolds
 of the perturbed periodic orbit near $\gamma_I^\p=\{(x^I,I,\theta)\mid\theta\in\Sset_{2\pi}^1\}$
 intersect transversely on its level set for $\epsilon>0$ sufficiently small.
\end{rmk}

\subsection{Periodically forced rigid body}

\begin{figure}
\includegraphics[scale=1,bb=0 0 196 120]{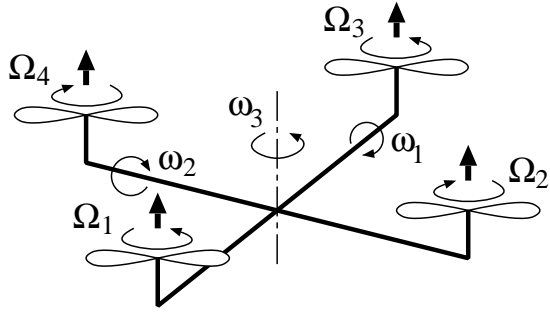}
\caption{Mathematical model for a quadrotor helicopter.
\label{fig:5b}}
\end{figure}

We next consider a three-dimensional system
\begin{equation}
\begin{split}
&
\dot{\omega}_1
=\frac{I_2-I_3}{I_1}\omega_2 \omega_3
 -\frac{I_0}{I_1}\Omega\omega_2+\frac{\ell b}{I_1}V_1,\\
&
\dot{\omega}_2
=\frac{I_3-I_1}{I_2}\omega_3 \omega_1
 +\frac{I_0}{I_2}\Omega\omega_1+\frac{\ell b}{I_2}V_2,\\
&
\dot{\omega}_3
=\frac{I_1-I_2}{I_3}\omega_1 \omega_2
+\frac{\ell d}{I_3}V_3,
\end{split}
\label{RB}
\end{equation}
which provides a mathematical model for a quadrotor helicopter shown in Fig.~\ref{fig:5b}.
In the model, $\omega_j{\in\Rset}$ and $I_j{>0}$, $j=1,2,3$, respectively,
 denote the angular velocities and moments of inertia about the quadrotor's principal axes, 
 $\ell$ represents the length from the center of mass to the rotational axis of the rotor,
 and $I_0$, $b$ and $d$ represent the rotor's moment of inertia about the rotational axis,
 thrust factor and drag factor, respectively.
Moreover,
\[
\Omega=\Omega_2+\Omega_4-\Omega_1-\Omega_3
\]
and
\[
V_1=\Omega_4^2-\Omega_2^2,\quad
V_2=\Omega_3^2-\Omega_1^2,\quad
V_3=\Omega_2^2+\Omega_4^2-\Omega_1^2-\Omega_3^2,
\]
where $\Omega_j$ is the angular velocity of the $j$th rotor for $j=1$-$4$.
See \cite{BMS04,HMLO02} for the derivation of \eqref{RB}.
{In particular}, the quadrotor can hover only if
\[
\Omega_j=\Omega_0:=\frac{1}{2}\sqrt{\frac{m_0 g}{b}},\quad
\mbox{$j=1$-$4$},
\]
where $m_0$ and $g$ are, respectively, the quadrotor's mass and gravitational acceleration.

Let $T>0$ be a constant,
 and let $\Omega_j=\Omega_0+\epsilon\Delta\Omega_j(t)$,
 where $\Delta\Omega_j(t)$ is a $T$-periodic function, for $j=1$-$4$.
This corresponds to a situation
 in which the quadrotor is subjected to periodic perturbations when hovering.
Let
\begin{align*}
&
v_1(t)=\Delta\Omega_4(t)-\Delta\Omega_2(t),\quad
v_2(t)=\Delta\Omega_3(t)-\Delta\Omega_1(t), \\
&
v_3(t)=\Delta\Omega_4(t)+\Delta\Omega_2(t)-\Delta\Omega_3(t)-\Delta\Omega_1(t)
\end{align*}
and
\[
\beta_0=I_0\Omega_0,\quad
\beta_1=\beta_2=2\ell b\Omega_0^2,\quad
\beta_3=2\ell d\Omega_0^2.
\]
Equation~\eqref{RB} is written as
\begin{equation}
\begin{split}
&
\dot{\omega}_1
=\frac{I_2-I_3}{I_1}\omega_2 \omega_3
 +\varepsilon\left(-\frac{\beta_0}{I_1}v_3(t)\omega_2+\frac{\beta_1}{I_1}v_1(t)\right)+O(\epsilon^2),\\
&
\dot{\omega}_2
=\frac{I_3-I_1}{I_2}\omega_3 \omega_1
 +\varepsilon\left(\frac{\beta_0}{I_2}v_3(t)\omega_1+\frac{\beta_2}{I_2}v_2(t)\right)+O(\epsilon^2),\\
&
\dot{\omega}_3
=\frac{I_1-I_2}{I_3}\omega_1 \omega_2
 +\varepsilon\frac{\beta_3}{I_3}v_3(t)+O(\epsilon^2),
\end{split}
\label{PFRB}
\end{equation}
in which chaotic motions were discussed in \cite{Y18}
 when $\beta_1=0$ and $v_2(t)=v_3(t)=\sin\nu t$ with $\nu>0$ a constant.
When $\epsilon=0$, Eq.~\eqref{PFRB} has a first integral
\[
F(\omega)=\frac{1}{2}(I_1\omega_1^2+I_2\omega_2^2+I_3\omega_3^2)
\]
and nonhyperbolic equilibria at
\[
p_{1\pm}(c_1)=(\pm c_1, 0, 0),\quad
p_{2\pm}(c_2)=(0,\pm c_2, 0),\quad
p_{3\pm}(c)=(0,0, \pm c_3)
\]
on the level set $F(\omega)=c>0$, where $c_j=\sqrt{2c/I_j}$, $j=1,2,3$.
The first integral $F(\omega)$ corresponds to the (Hamiltonian) energy of the unperturbed rigid body.

Let $X_\epsilon(\omega,t)=X^0(\omega)+\epsilon X^1(\omega,t)+O(\epsilon^2)$
 denote the non-autonomous vector field of \eqref{PFRB}
 and define the corresponding autonomous vector field
 $\tilde{X}_\epsilon(\omega,\theta)
 =\tilde{X}^0(\omega,\theta)+\epsilon\tilde{X}^1(\omega,\theta)+O(\epsilon^2)$
 on $\Rset^3\times\Sset_T^1$ like \eqref{mel-aut}, where
\begin{align*}
\tilde{X}^0(\omega,\theta)=
\begin{pmatrix}
X^0(\omega)\\
1
\end{pmatrix},\quad
\tilde{X}^1(\omega,\theta)=
\begin{pmatrix}
X^1(\omega,\theta)\\
1
\end{pmatrix}.
\end{align*}
The unperturbed vector field $\tilde{X}^0(\omega,\theta)$ has six one-parameter families
 of nonhyperbolic periodic orbits $\gamma_{j\pm,c_j}(t)=(p_{j\pm}(c_j), t)$, $j=1,2,3$.
We compute the integral \eqref{integral1} as
\[
\mathscr{I}_{F,\gamma_{j\pm,c_j}}
=\int_0^T dF(\tilde{X}^1)(p_j(c_j),t)dt
=\pm c_j\beta_j \int_0^T v_j(t)dt,\quad
j=1,2,3,
\]
and apply Theorems~\ref{thm:main2} and \ref{thm:main1} to obtain the following.
 
\begin{prop}
\label{prop:5b}
For $j=1,2,3$, if 
\[
\beta_j\int_0^T v_j(t)\neq 0,
\]
then the periodic orbit $\gamma_{j\pm,c_j}(t)%=(p_{j\pm}(c_j), t)
 $ does not persist
 for any $c_j>0$ and the first integral $F(\omega)$ does not persist near
\[
\{(p_{j+}(c_j),\theta)\in\Rset \mid c_j>0,\theta\in\Sset_T^1\}
\cup\{(p_{j-}(c_j),\theta)\in\Rset \mid c_j>0,\theta\in\Sset_T^1\}
\]
in \eqref{PFRB}.
\end{prop}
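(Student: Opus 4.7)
The plan is to apply Theorems~\ref{thm:main2} and \ref{thm:main1} to the autonomous lifted vector field $\tilde{X}_\epsilon$ on $\Rset^3\times\Sset_T^1$ introduced just before the statement. First I would verify assumptions (A1) and (A2) for each branch: the curve $\gamma_{j\pm,c_j}(t)=(p_{j\pm}(c_j),t)$ is $T$-periodic under $\tilde{X}^0$ because $X^0$ vanishes at the equilibrium $p_{j\pm}(c_j)$ while $\dot{\theta}=1$, and $F(\omega)$, regarded as a $\theta$-independent function on $\Rset^3\times\Sset_T^1$, is a smooth first integral of $\tilde{X}^0$ near the corresponding orbit.

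Next I would justify the stated value of $\mathscr{I}_{F,\gamma_{j\pm,c_j}}$. Since
\[
dF=I_1\omega_1\,d\omega_1+I_2\omega_2\,d\omega_2+I_3\omega_3\,d\omega_3,
\]
at the axial point $p_{j\pm}(c_j)$ only the $j$-th summand survives and $dF|_{p_{j\pm}(c_j)}=\pm I_j c_j\,d\omega_j$. Inspecting the $j$-th component of $X^1$ read from \eqref{PFRB}, the quadratic term in $\omega$ and the gyroscopic term involving $v_3(t)$ both vanish at $p_{j\pm}(c_j)$ because they are proportional to the two coordinates that are zero on the $j$-th axis, leaving only the forcing contribution $(\beta_j/I_j)v_j(t)$. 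Consequently $dF(\tilde{X}^1)(p_{j\pm}(c_j),t)=\pm c_j\beta_j v_j(t)$, and integrating over one period gives $\mathscr{I}_{F,\gamma_{j\pm,c_j}}=\pm c_j\beta_j\int_0^T v_j(t)\,dt$, as quoted before the proposition.

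Under the hypothesis $\beta_j\int_0^T v_j(t)\,dt\neq 0$, this integral is nonzero for every $c_j>0$. Theorem~\ref{thm:main2} then rules out the existence of a $T_\epsilon$-periodic orbit of $\tilde{X}_\epsilon$ depending $C^2$-smoothly on $\epsilon$ and emanating from $\gamma_{j\pm,c_j}$, while Theorem~\ref{thm:main1} forbids a $C^3$ first integral of $\tilde{X}_\epsilon$ near $\Gamma_{j\pm,c_j}=\{\gamma_{j\pm,c_j}(t)\mid t\in[0,T)\}$ reducing to $F$ at $\epsilon=0$; since the set in the statement is $\bigcup_{c_j>0}\Gamma_{j\pm,c_j}$, no persisting first integral near it can exist either. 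The only mildly delicate point is the axial evaluation in the second paragraph: one must be careful to check that each non-forcing term in $X^1$ (the Euler quadratic term and the $v_3$-gyroscopic term) really does vanish at $p_{j\pm}(c_j)$, so that the surviving contribution to $dF(\tilde{X}^1)$ is exactly the single forcing channel $v_j(t)$. Once that bookkeeping is in place, both conclusions follow immediately from the two cited theorems.
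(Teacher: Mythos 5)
Your proposal is correct and follows essentially the same route as the paper: the paper likewise passes to the autonomous lift $\tilde{X}_\epsilon$ on $\Rset^3\times\Sset_T^1$, evaluates $dF(\tilde{X}^1)$ at the axial equilibria to get $\mathscr{I}_{F,\gamma_{j\pm,c_j}}=\pm c_j\beta_j\int_0^T v_j(t)\,dt$, and then applies Theorems~\ref{thm:main2} and \ref{thm:main1}. The only cosmetic slip is attributing the Euler quadratic term to $X^1$ (it sits in $X^0$, which is harmless since $dF(X^0)=0$); otherwise your bookkeeping matches the paper's computation exactly.
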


\begin{rmk}\
\begin{enumerate}
\setlength{\leftskip}{-1.8em}
\item[(i)]
In \cite{Y18},
 when $\beta_1=0$ and $v_2(t)=v_3(t)=\sin\nu t$ with $\nu>0$ a constant,
 it was shown that the periodic orbits $\gamma_{2\pm,c_2}$ persist for $c_2>0$
 if and only if $\beta_0=0$ or $\beta_3=0$ (see Proposition~2 of \cite{Y18}).
\item[(ii)]
The unperturbed vector field $X^0(\omega)$ has another first integral
\[
\tilde{F}(\omega)=(I_1^2\omega_1^2+I_2^2\omega_2^2+I_3^2\omega_3^2),
\]
which corresponds to the angular momentum of the rigid body.
We compute the integral \eqref{integral1} as
\[
\mathscr{I}_{\tilde{F},\gamma}
=\int_0^T d\tilde{F}(\hat{X}^1)(p_j(c_j),t)dt
=\pm 2c_jI_j\beta_j \int_0^T v_j(t)dt,\quad
j=1,2,3,
\]
so that the same statement  as Proposition~\ref{prop:5b} holds for $\tilde{F}(\omega)$.
\end{enumerate}
\end{rmk}

\subsection{Three-mode truncation of a buckled beam}

\begin{figure}
\includegraphics[scale=0.9,bb=0 0 96 183]{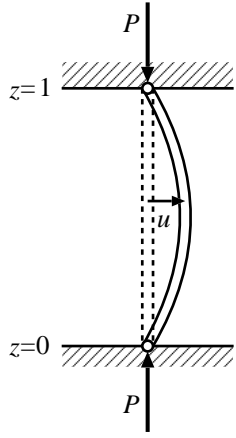}
\caption{Buckeled beam.
The variables $u$ and $P$ represent the deflection and compressive force, respectively.
The length of the beam when $u\equiv 0$ is non-dimensionalized to the unity.
\label{fig:5c}}
\end{figure}

Finally, we consider a six-dimensional autonomous system
\begin{equation}
\begin{split}
&
\dot{x}_1=x_4,\quad
\dot{x}_4=x_1-(x^2_1+\beta_1x_2^2+\beta_2x_3^2)x_1,\\
&
\dot{x}_2=x_5,\quad
\dot{x}_5=-\omega^2_1x_2-\beta_1(x^2_1+\beta_1x_2^2+\beta_2x_3^2)x_2,\\
&
\dot{x}_3=x_6,\quad
\dot{x}_6=-\omega^2_2x_3-\beta_2(x^2_1+\beta_1x_2^2+\beta_2x_3^2)x_3,
\end{split}
\label{MODE}
\end{equation}
which represents a three-mode truncation of a buckled beam shown in Fig.~\ref{fig:5c},
 where {$x_j\in\Rset$, $j=1$-$6$,}
 $\omega_j,\beta_j>0$, $j=1,2$, are constants such that $\omega_1<\omega_2$.
See \cite{Y01} for the details on the model.
In \eqref{MODE} there is a saddle-center equilibrium at {$(x_1, \ldots, x_6)=(0,\ldots,0)$}
 and it has a homoclinic orbit.
It was also shown in \cite{Y05} that for almost all pairs of $\beta_1,\beta_2>0$
 the system \eqref{MODE} exhibits chaotic motions and it is nonintegrable.

Let $x_j=\sqrt{\epsilon}y_j$, $j=1$-$6$, with the small parameter $\epsilon$.
We rewrite \eqref{MODE} as
\begin{equation}
\begin{split}
&
\dot{y}_1=y_4,\quad
\dot{y}_4=y_1-\epsilon(y^2_1+\beta_1y_2^2+\beta_2y_3^2)y_1,\\
&
\dot{y}_2=y_5,\quad
\dot{y}_5
 =-\omega^2_1y_2-\epsilon\beta_1(y_1^2+\beta_1y_2^2+\beta_2y_3^2)y_2,\\
&
\dot{y}_3=y_6,\quad
\dot{y}_6
=-\omega^2_2y_3-\epsilon\beta_2(y_1^2+\beta_1y_2^2+\beta_2 y_3^2)y_3,
\end{split}
\label{MODEe}
\end{equation}
which is regarded as as a perturbation of a linear system.
When $\epsilon=0$, Eq.~\eqref{MODEe} has two one-parameter families of periodic orbits
{
\begin{align*}
&
\gamma_{1,c}(t)=(0,c\sin\omega_1 t,0,0,c\omega_1\cos\omega_1 t,0),\\
&
\gamma_{2,c}(t)=(0,0,c\sin\omega_2 t,0,0,c\omega_2\cos\omega_2 t)
\end{align*}
}
for $c>0$, three first integrals
\begin{align*}
F_1(y)=-y_1^2+y_4^2,\quad 
F_2(y)=\omega^2_1y_2^2+y_5^2,\quad
F_3(y)=\omega^2_2y_3^2+y_6^2,
\end{align*}
and six commutative vector fields
{
\begin{align*}
&
Z_1=(y_1,0,0,y_4,0,0),\quad
Z_2=(y_4,0,0,y_1,0,0),\\
&
Z_3=(0,y_2,0,0,y_5,0),\quad
Z_4=(0,y_5,0,0,-\omega_1^2y_2,0),\\
&
Z_5=(0,0,y_3,0,0,y_6),\quad
Z_6=(0,0,y_6,0,0,-\omega_2^2y_3).
\end{align*}
}
Moreover, the AVE of \eqref{MODEe} with $\epsilon=0$ %around any solution
 is given by
\begin{align*}
&
\dot{\eta}_1=-\eta_4,\quad
\dot{\eta}_2=\omega^2_1\eta_5,\quad
\dot{\eta}_3=\omega^2_2\eta_6,\\
&
\dot{\eta}_4=-\eta_1,\quad
\dot{\eta}_5=-\hat{\eta}_2,\quad
\dot{\eta}_6=-\eta_3,
\end{align*}
which has four linearly independent periodic solutions
{
\begin{align*}
&
\tilde{\gamma}_1(t)=(0,\omega_1\sin\omega_1 t,0,0,\cos\omega_1 t,0),\\
&
\tilde{\gamma}_2(t)=(0,\omega_1\cos\omega_1 t,0,0,-\sin\omega_1 t,0),\\
&
\tilde{\gamma}_3(t)=(0,0,\omega_2\sin\omega_2 t,0,0,\cos\omega_2 t),\\
&
\tilde{\gamma}_4(t)=(0,0,\omega_2\cos\omega_2 t,0,0,-\sin\omega_2 t)
\end{align*}
}
and two linearly independent unbounded solutions. 
We compute \eqref{integral1} and \eqref{integral2} as
\[
\mathscr{I}_{F_j,\gamma_{\ell,c}}
=\int_0^{2\pi/\omega_\ell}dF_j(X_1)(\gamma_{\ell,c}(t))dt=0,\quad
\mbox{$j=1,2,3$ and $\ell=1,2$,}
\]
and
\begin{align}
\mathscr{J}_{\tilde{\gamma}_j,Z_k,\gamma_{\ell,c}}
=& \int_0^{2\pi/\omega_\ell}\tilde{\gamma}_j(t)\cdot[X_1,Z_k]_{\gamma_{\ell,c}(t)}dt\notag\\
=&
\begin{cases}
\displaystyle
\frac{3}{2}\pi\beta^2_1c^3 & \mbox{if $(j,k,\ell)=(2,3,1)$};\\[1.5ex]
\displaystyle
\frac{3}{2}\pi\beta^2_2c^3 & \mbox{if $(j,k,\ell)=(4,5,2)$};\\
0 & \mbox{otherwise},
\end{cases}
\label{eqn:5.3}
\end{align}
where {$X^1$} represents the $O(\epsilon)$-terms of the vector field in \eqref{MODEe}.
In \eqref{eqn:5.3}, the subscript $j$ is allowed to take $1$ or $2$ for $\ell=1$,
 and $3$ or $4$ for $\ell=2$.
Theorems~\ref{thm:main2}, \ref{thm:main1} and  and \ref{thm:main3o}
 give no meaningful information on persistence of periodic orbits and first integrals,
 but application of Theorem~\ref{thm:main3} yields the following.

\begin{prop}
The commutative vector fields $Z_3$ and $Z_5$ do not persist
 near the {$(y_2,y_5)$- and $(y_3,y_6)$-planes}, respectively, in \eqref{MODEe}.
Moreover, in \eqref{MODE}, near the origin, there is no commutative vector field
 which has the linear term
\[
{\tilde{Z}_3=(0,x_2,0,0,x_5,0)\quad\mbox{or}\quad
\tilde{Z}_5=(0,0,x_3,0,0,x_6).}
\]
\end{prop}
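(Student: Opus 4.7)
The first assertion follows directly from Theorem~\ref{thm:main3}. For every $c>0$, the orbit $\gamma_{1,c}(t)$ is a $(2\pi/\omega_1)$-periodic orbit of \eqref{MODEe} at $\epsilon=0$, and $\tilde{\gamma}_2(t)$ is a periodic horizontal section of $\nabla^\ast$ along it, whose existence is guaranteed by Lemma~\ref{lem:adj}. The value $\mathscr{J}_{\tilde{\gamma}_2,Z_3,\gamma_{1,c}}=\tfrac{3}{2}\pi\beta_1^2 c^3\neq 0$ from \eqref{eqn:5.3} then rules out persistence of $Z_3$ near any $\Gamma_{1,c}$; as $c$ varies over $(0,\infty)$ these orbits fill the $(y_2,y_5)$-plane, which yields the claim. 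The statement for $Z_5$ is identical using $\gamma_{2,c}$, $\tilde{\gamma}_4$ and $\mathscr{J}_{\tilde{\gamma}_4,Z_5,\gamma_{2,c}}=\tfrac{3}{2}\pi\beta_2^2 c^3\neq 0$.

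For the second assertion I would argue by contradiction via the rescaling $x=\sqrt{\epsilon}\,y$ that converts \eqref{MODE} into \eqref{MODEe}. Suppose $\tilde{Z}$ is a smooth commutative vector field of \eqref{MODE} defined near the origin with linear part $\tilde{Z}_3$. Since the vector field $X$ of \eqref{MODE} contains only linear and cubic terms, the involution $T(x)=-x$ satisfies $T_\ast X=X$, and $\tilde{Z}_3$ is likewise $T$-invariant. The pushforward $T_\ast\tilde{Z}$ is therefore also commutative with $X$ and has linear part $\tilde{Z}_3$, so the average
\[
\tilde{Z}_{\mathrm{odd}}:=\tfrac{1}{2}\bigl(\tilde{Z}+T_\ast\tilde{Z}\bigr)
\]
is commutative with $X$, $T$-invariant, and still has linear part $\tilde{Z}_3$. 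Since $T$-invariance of a vector field $V$ means $V(-x)=-V(x)$, after replacing $\tilde{Z}$ by $\tilde{Z}_{\mathrm{odd}}$ I may assume that the Taylor expansion of $\tilde{Z}$ at the origin contains only odd-degree homogeneous parts, namely $\tilde{Z}(x)=\tilde{Z}_3(x)+C(x)+O(|x|^5)$ with $C$ cubic.

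This odd-only structure ensures that the rescaled field
\[
Z_\epsilon(y):=\epsilon^{-1/2}\tilde{Z}(\sqrt{\epsilon}\,y)=Z_3(y)+\epsilon\,C(y)+O(\epsilon^2)
\]
is $C^2$-smooth in $\epsilon$ with $Z_0=Z_3$, is commutative with the vector field of \eqref{MODEe} for every $\epsilon>0$, and is defined on any preassigned compact set in $y$-space once $\epsilon$ is sufficiently small. Applying the first assertion to $(X_\epsilon,Z_\epsilon)$ along $\Gamma_{1,c}$ for any fixed $c>0$ forces $\mathscr{J}_{\tilde{\gamma}_2,Z_3,\gamma_{1,c}}=0$, contradicting \eqref{eqn:5.3}; the case of $\tilde{Z}_5$ is handled identically. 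The main obstacle is precisely the smoothness-in-$\epsilon$ of $Z_\epsilon$: without the symmetrization the rescaled field would carry an $O(\sqrt{\epsilon})$ quadratic remainder and lie outside the hypotheses of Theorem~\ref{thm:main3}, and it is the odd symmetry of \eqref{MODE} that removes this obstruction automatically.
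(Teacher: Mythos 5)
Your proof of the first assertion is exactly the paper's: apply Theorem~\ref{thm:main3} to $\gamma_{1,c}$ and $\gamma_{2,c}$ with the nonvanishing integrals from \eqref{eqn:5.3}. For the second assertion you follow the same overall route as the paper (rescale $x=\sqrt{\epsilon}\,y$ and reduce to the first part), but you add a step the paper omits: the paper simply asserts that a vector field with linear part $\tilde{Z}_3$ is transformed to $Z_3+O(\epsilon)$, which is literally false if $\tilde{Z}$ has a nonzero quadratic term, since that term rescales to $O(\sqrt{\epsilon})$ and would put $Z_\epsilon$ outside the $C^2$-in-$\epsilon$ hypothesis of Theorem~\ref{thm:main3}. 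Your symmetrization by the involution $T(x)=-x$, which preserves the vector field of \eqref{MODE} because it has only odd-degree terms, kills the even-degree part of $\tilde{Z}$ while preserving commutativity and the linear part, and so legitimately restores the $Z_3+\epsilon C+O(\epsilon^2)$ form. This is a genuine improvement in rigor over the paper's one-line justification. (An alternative, equally valid fix is to observe that the proofs of Theorems~\ref{thm:main1} and \ref{thm:main3} only use $Z_\epsilon=Z+o(1)$ in $C^1$, so the $O(\sqrt{\epsilon})$ correction is in fact harmless; but as the theorems are \emph{stated}, your symmetrization is needed.) One small caveat: the remainder estimate $O(|x|^5)$ after the cubic term, and hence the $O(\epsilon^2)$ tail of $Z_\epsilon$, presupposes more smoothness of $\tilde{Z}$ than the $C^3$ required elsewhere in the paper; for a merely $C^3$ field you get $Z_\epsilon=Z_3+\epsilon C+o(\epsilon)$, which again suffices for the proof of Theorem~\ref{thm:main3} but not verbatim for its hypotheses. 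This does not affect the correctness of your argument in the smooth category you work in.
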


\begin{proof}
The first part immediately follows from application of Theorem~\ref{thm:main3}.
The second part is easily proven
 since a vector field having such a linear term for \eqref{MODE}
 is transformed to $Z_3+O(\epsilon)$ or $Z_5+O(\epsilon)$ for \eqref{MODEe}.
\end{proof}

\begin{rmk}\ 
\begin{enumerate}
\setlength{\leftskip}{-1.8em}
\item[(i)]
By the Lyapunov center theorem (e.g., Theorem 5.6.7 of \cite{AM78}),
 there exist two families of periodic orbits in \eqref{MODE}
 if $\omega_2/\omega_1,\omega_1/\omega_2\notin\Zset$.
Hence, the periodic orbits $\gamma_{j,c}$, $j=1,2$,
 persist in \eqref{MODEe} for such values of $\omega_j$, $j=1,2$, at least.
\item[(ii)]
As shown in \cite{Y05},
 the Hamiltonian system \eqref{MODE} is nonintegrable for almost all pairs of $\beta_j$, $j=1,2$.
Hence, the three first integrals $F_j(y)$, $j=1,2,3$,
 do not persist in \eqref{MODEe} for such values of $\beta_j$, $j=1,2$, at least.
\end{enumerate}
\end{rmk}

\section*{Acknowledgement}
This work was partially supported by the JSPS KAKENHI Grant Numbers JP17H02859 and JP19J22791.

%\section*{acknowledgment}
%The authors thank 
% for useful comments and helpful discussions.
% This work is 
%Mitsuru Shibayama,Yoshiyuki Yamaguchi, Shogo Yamanaka
%Hiroshi Kokubu, Juan Morales, Shotaro Yamazoe, Junji Yamada
 
%\newpage
% **********************************************************
% Appendices
% **********************************************************

\setcounter{equation}{0}
\renewcommand{\theequation}{\Alph{section}.\arabic{equation}}

\appendix

%\begin{comment}
\section{%Connections of vector bundles and %(adjoint) variational equations
Some auxiliary materials for Section 3}\label{AppendixA}
In this appendix, we provide some prerequisites for Section 3:
Basic notions and facts on connections of vector bundles
 and linear differential equations.
% Some references are recommended, e.g. \cite{CR88, IY08} and \cite{MR01}.
Similar materials are found in \cite{CR88,IY08,MR01}.
See, e.g., \cite{BT82} for necessary information on vector bundles.

\subsection{Connections and %local frames of vector bundles
horizontal sections}
We begin with connections of vector bundles and their horizontal sections.
Henceforth $M$ represents a $C^1$ $m$-dimensional manifold for $m\in\Nset$,
and $E$ represents a $C^1$ vector bundle of rank $r$ over $M$
with a projection $\pi:E\to M$
for some $m,r\in\Nset$.
 %, i.e., $\pi:E\to M$ is a $C^1$ vector bundle.
Let $C(M)$ be a set of all $C^1$ $\Rset$-valued functions on $M$
 and let $C(M,E)$ be a set of all $C^1$ sections of $E$.
Let $T^\ast M$ be the cotangent bundle of $M$.
Note that $T^*M\otimes E$ is also a $C^1$ vector bundle.
We first give basic definitions.

\begin{dfn}
An $\Rset$-linear map
$$
\nabla:C(M, E)\to C(M, T^*M\otimes E)
$$
is called a \emph{connection} of the vector bundle $E$ %$\pi:E\to M$
if
\begin{equation}
\nabla (fs)=df\otimes s+f\nabla s
\label{eqn:defa1i}
\end{equation}
for any $f\in C(M)$ and $s\in C(M, E)$.
A section $s\in C(M,E)$ is said to be \emph{horizontal} for the connection $\nabla$
if $\nabla s=0$.
\end{dfn}

%Let $E$ be of rank $r\in\Nset$,
%%and let $\{(U_\beta,\phi_\beta)\}_{\beta\in I}$ be a local trivialization of $E$,
%i.e, there exist an open cover $\{U_\beta\}_{\beta\in I}$ of $M$
%and $C^1$ diffeomorphisms
%$$
%%\pi^{-1}(U_\beta) \simeq_{\phi}  U_\beta\times\Rset^r.
%\phi_\beta:\pi^{-1}(U_\beta))\to U_\beta\times\Rset^r,\quad
%\beta\in I.
%$$
Let $U\subset M$ be an open neighborhood %set
and let $\{e_j\}_{j=1}^r$ be a frame on $U$, so that
any section $s\in C(M, E)$ is expressed as
\begin{equation}
s=\sum_{j=1}^r s^j e_j
\label{eqn:section}
\end{equation}
on $U$ for some $s^j\in C(M)$ for $j=1,\ldots, r$.
%Using the frame $e_1, ..., e_r$,

\begin{dfn}
For each $i=1,\ldots,r$ we can write
\begin{equation}
\nabla e_i=\sum_j \theta_i^j\otimes e_j,
\label{eqn:defa2}
\end{equation}
where $\theta_i^j:M\to T^\ast M$, $j=1,\ldots,r$.
The $r\times r$ matrix $\theta=(\theta_i^j)$ is called
the \emph{connection form} of $\nabla$ on $U$
in the frame $\{e_j\}_{j=1}^r$.
\end{dfn}

Let $s\in C(M,E)$.
Using \eqref{eqn:defa1i} and \eqref{eqn:defa2}, we compute
 \begin{align*}
\nabla s
=&\sum_{j=1}^r \nabla(s^je_j)
=\sum_{j=1}^r(ds^j\otimes e_j +s^j\nabla e_j)\\
=&\sum_{i=1}^r ds^i\otimes e_i
+\sum_{i=1}^r s^i\biggl(\sum_{j=1}^r \theta_i^j \otimes e_j\biggr)
%= \sum_i ds^i\otimes e_i +\sum_j s^j (\sum_i \theta_j^i \otimes e_i)\\
=\sum_{i=1}^r\biggl(ds^i + \sum_{j=1}^r s^j \theta_j^i\biggr)\otimes e_i.
\end{align*}
Hence, the condition for the section $s$ to be horizontal, $\nabla s=0$, is equivalent to
\begin{equation}\label{local-conn}
ds^i + \sum_{j=1}^r s^j \theta_j^i=0,\quad i=1, ..., r,
\end{equation}
on $U$.

\begin{dfn}
Let $ E^\ast$ be the dual bundle of $E$.
A connection $\nabla^*$ of $E^*$ given by
\begin{equation}
d\langle s, \alpha \rangle
=\langle \nabla s, \alpha\rangle 
+\langle s, \nabla^* \alpha \rangle
\label{eqn:defa1ii}
\end{equation}
for any $s\in C(M, E)$ and $\alpha\in C(M, E^*)$ is called a \emph{dual connection} of $\nabla$.
%where $\langle\ ,\ \rangle$ denotes the natural pairing by the duality.
\end{dfn}

Let $\{e^j\}_{j=1}^r$ be the dual frame for the frame $\{e_j\}_{j=1}^r$, i.e.,
\begin{equation}
\langle e_i,e^j\rangle=\delta_{ij},\quad
i,j=1,\ldots,r,
\label{eqn:dualfr}
\end{equation}
where $\delta_{ij}$ is Kronecker's delta.
We have the following relation between %the connection forms of
connections and their dual connections.

\begin{prop}\label{dual-conn}
Let $\theta=(\theta_i^j)$ be the connection form of a connection $\nabla$ on $U$.
Then the connection form $\theta^*$ of the dual connection $\nabla^*$
is given by ${\theta_i^*}^j=-\theta_j^i$ on $U$.
\end{prop}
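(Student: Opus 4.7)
The plan is to read off the connection form of $\nabla^{*}$ from the defining identity \eqref{eqn:defa1ii} by evaluating it on frame pairs. First I would choose, on the open set $U$, the frame $\{e_j\}_{j=1}^{r}$ of $E$ together with its dual frame $\{e^{j}\}_{j=1}^{r}$ of $E^{*}$, so that \eqref{eqn:dualfr} gives $\langle e_i, e^{j}\rangle=\delta_{ij}$ for all $i,j$. Since each $\delta_{ij}$ is a locally constant function on $U$, applying the exterior derivative yields $d\langle e_i, e^{j}\rangle=0$.

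Next I would apply the defining relation \eqref{eqn:defa1ii} of the dual connection to $s=e_i$ and $\alpha=e^{j}$, which gives
\begin{equation*}
0 \;=\; d\langle e_i, e^{j}\rangle \;=\; \langle \nabla e_i,\, e^{j}\rangle + \langle e_i,\, \nabla^{*} e^{j}\rangle.
\end{equation*}
Using the definition \eqref{eqn:defa2} of the connection form of $\nabla$, I would compute
\begin{equation*}
\langle \nabla e_i,\, e^{j}\rangle
= \Bigl\langle \sum_{k=1}^{r}\theta_i^{k}\otimes e_k,\; e^{j}\Bigr\rangle
= \sum_{k=1}^{r}\theta_i^{k}\,\langle e_k, e^{j}\rangle
= \theta_i^{j}.
\end{equation*}
In the same way, writing the connection form of $\nabla^{*}$ in the dual frame as $\nabla^{*} e^{j}=\sum_{k=1}^{r}{\theta^{*}}_j^{k}\otimes e^{k}$, I would obtain
\begin{equation*}
\langle e_i,\, \nabla^{*} e^{j}\rangle
= \sum_{k=1}^{r}{\theta^{*}}_j^{k}\,\langle e_i, e^{k}\rangle
= {\theta^{*}}_j^{i}.
\end{equation*}

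Substituting these two expressions into the previous display gives $\theta_i^{j}+{\theta^{*}}_j^{i}=0$ on $U$ for all $i,j=1,\dots,r$, which is exactly the claimed identity ${\theta^{*}}_i^{j}=-\theta_j^{i}$ after relabelling indices. No real obstacle is anticipated: the only subtle point is making sure the natural pairing of $T^{*}M\otimes E$ with $E^{*}$ (and of $E$ with $T^{*}M\otimes E^{*}$) is unambiguously the one implicit in \eqref{eqn:defa1ii}, so that the $T^{*}M$ factor passes through the pairing as a scalar coefficient; once this is acknowledged the computation above is immediate.
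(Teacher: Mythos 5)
Your proposal is correct and follows essentially the same route as the paper's own proof: both evaluate the defining identity \eqref{eqn:defa1ii} on the frame pair $(e_i,e^j)$, use $d\langle e_i,e^j\rangle=0$ together with \eqref{eqn:defa2} to get $\langle\nabla e_i,e^j\rangle=\theta_i^j$, and read off the dual connection form from the resulting relation. The only cosmetic difference is that you introduce the unknown coefficients ${\theta^*}_j^k$ explicitly before solving, while the paper concludes directly from $\bigl\langle e_i,\sum_k\theta_k^j\otimes e^k+\nabla^*e^j\bigr\rangle=0$ holding for all $i$.
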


\begin{proof}
Using \eqref{eqn:defa1ii} and \eqref{eqn:dualfr}, we compute
 \[
 0=d\langle e_i, e^j\rangle
=\langle\nabla e_i, e^j\rangle+\langle e_i, \nabla^*e^j\rangle.
\]
Since by \eqref{eqn:defa2} and \eqref{eqn:dualfr}
\[
\langle\nabla e_i, e^j\rangle=\left\langle\sum_{k=1}^r\theta_i^k\otimes e_k,e^j\right\rangle
=\theta_i^j=\left\langle e_i, \sum_{k=1}^r\theta_k^j \otimes e^k\right\rangle,
\]
we obtain
\[
\left\langle e_i, \sum_{k=1}^r\theta_k^j \otimes e^k+\nabla^*e^j\right\rangle=0,\quad i,j=1, \ldots,r.
\]
Hence,
\[
\nabla^* e^j=\sum_{k=1}^r -\theta_k^j \otimes e^k
\]
for $j=1,\ldots,r$. 
\end{proof}

\subsection{Connections and linear differential equations}
Let $m=1$ and assume that
 the one-dimensional manifold $M$ is paracompact and connected.
We will see below that
a connection of the vector bundle $E$ defines a linear differential equation
and horizontal sections of the connection correspond to solutions to the differential equations.

Take an open neighborhood $U\subset M$ and its local coordinate $t\in\Rset$.
Let $\nabla$ be a connection
and let $s\in C(M,E)$ be a horizontal section of $\nabla$ given by \eqref{eqn:section}.
We write the connection form $\theta=(\theta_i^j)$ as
$$
\theta_i^j=a_{ij}(t) dt
$$
for some $a_{ij}(t)\in C(M)$.
Then Eq.~\eqref{local-conn} is expressed as
\begin{equation}
ds^i + \sum_{j=1}^r a_{ji}(t)s^j dt=0,\quad i=1, ..., r.
\label{eqn:ds}
\end{equation}

Let $A(t)=(A_{ij}(t))$ be an $r\times r$ matrix with  $A_{ij}(t):=-a_{ji}(t)$
and let $\hat{s}(t)=(s^1(t), ..., s^r(t))^\mathrm{T}$.
From \eqref{eqn:ds} we obtain a linear differential equation
\begin{equation}
\frac{d}{dt}\hat{s}(t)=A(t)\hat{s}(t).
\label{eqn:a2}
\end{equation}
Thus, the relation $\nabla s=0$ is locally represented by a linear differential equation.
Below we apply the above argument to the VE \eqref{VEconn} and AVE \eqref{adjVEconn}
to derive \eqref{VEloc} and \eqref{adjVEloc}, respectively.

%\begin{exm}[Variational equations]\label{exm:VE}
\subsubsection{Derivation of \eqref{VEloc}}
We consider the VE \eqref{VEconn} and set $M=\Gamma$ and $E=T_\Gamma$ with $r=n$.
Choose the frame $\displaystyle
\left(\frac{\partial}{\partial x_1},\ldots,\displaystyle\frac{\partial}{\partial x_n}\right)$
%associated with the coordinates $(x_1, ..., x_n)$, 
and write
\[
X=\sum_{j=1}^n X_j \frac{\partial}{\partial x_j}
\]
locally.
We compute
\begin{align*}
\nabla \frac{\partial}{\partial x_i}
=&dt\otimes \mathcal{L}_X\biggl(\frac{\partial}{\partial x_i}\biggr)\bigg|_\Gamma=
dt\otimes\biggl[\sum_{j=1}^n X_j \frac{\partial}{\partial x_j}, \frac{\partial}{\partial x_i}
\biggr]_\Gamma\\
=&dt\otimes\biggl(-\sum_{j=1}^n \frac{\partial X_j}{\partial x_i} \frac{\partial}{\partial x_j}\biggr)
\bigg|_\Gamma
=-\sum_{j=1}^n\frac{\partial X_i}{\partial x_j}(\phi(t))dt\otimes \frac{\partial}{\partial x_i},\quad
i=1,\ldots,n,
\end{align*}
so that
\begin{equation}
\theta_i^j=-\frac{\partial X_j}{\partial x_i}(\phi(t))dt,\quad\mbox{i.e.,}\quad
A_{ij}(t)=\frac{\partial X_i}{\partial x_j}(\phi(t)),\quad
i,j=1,\ldots,n.
\label{eqn:a21}
\end{equation}
This yields \eqref{VEloc} along with \eqref{eqn:a2}.
%\end{exm}

\subsubsection{Derivation of \eqref{adjVEloc}}

%\begin{exm}[Adjoint variational equations]
We next consider the AVE \eqref{adjVEconn} in the setting of Section~A.2.1.
% check that adjoint variational equation $\nabla^* \alpha=\mathcal{L}_X \omega|_\Gamma=0$ is 
%locally expressed as $\dot{\eta}=-DX^\top(\phi(t))\eta$.
Choose the frame $(dx_1,\ldots,dx_n)$.
%Since 
%$$
%\mathcal{L}_X \alpha=\sum_{i,j} \left( \eta_i\frac{\partial X_i}{\partial x_j}+X_i\frac{\partial \eta_j}{\partial x_i}\right)dx^j,
%$$
Using Proposition~\ref{dual-conn} and \eqref{eqn:a21}, we obtain
%\begin{align*}
%\nabla^* dx^i
%&=dt\otimes \left(\sum_j \frac{\partial X_i}{\partial x_j} dx^j\right) |_\Gamma\\
%&=\frac{\partial X_j}{\partial x_i}(\phi(t))dt\otimes dx^i.
%\end{align*}
\[
\theta_i^{*j}=\frac{\partial X_i}{\partial x_j}(\phi(t))\quad\mbox{i.e.,}\quad
A_{ij}(t)=-\frac{\partial X_j}{\partial x_i}(\phi(t)),\quad
i,j=1,\ldots,n.
\]
This yields \eqref{adjVEloc} along with \eqref{eqn:a2}.
%\end{exm}

%\newpage
% **********************************************************
% Bibliography
% **********************************************************

\end{document}